\documentclass{amsart}

%\date{\vspace{-5ex}}
\usepackage{amsthm}
\usepackage{amsmath,amssymb}
\usepackage{dsfont}
\usepackage{amssymb}
\usepackage{enumerate}
\usepackage{graphicx}
\usepackage{float}
\usepackage{bbm}
\usepackage{comment}
\usepackage{hyperref}
%\usepackage{tikz}
%\usetikzlibrary{shapes,snakes}
%\usepackage{listings}
%\lstloadlanguages{[5.2]Mathematica}

%%%%%%%%%%%%%%%%%%%%%

%\tikzstyle{mybox} = [draw=blue!20, fill=white, very thick,
%rectangle, rounded corners, inner sep=10pt, inner ysep=20pt]
%\tikzstyle{fancytitle} =[fill=blue!15, text=black]

%%%%%%%%%%%%%%%%%%%
\hypersetup{
	colorlinks,
	citecolor=blue,
	filecolor=blue,
	linkcolor=blue,
	urlcolor=blue
}
\newtheorem{theorem}{Theorem}[section]
\newtheorem{proposition}[theorem]{Proposition}
\newtheorem{lemma}[theorem]{Lemma}
\newtheorem{problem}{Problem}

\newtheorem{corollary}[theorem]{Corollary}

\newtheorem{remark}[theorem]{Remark}

\theoremstyle{definition}
\newtheorem{definition}{Definition}

% % % % % % % % % % %our macros % % % % % % % % % % % % % % %

\newcommand{\TT}{{\mathbb T}}

\newcommand{\R}{\mathbb{R}}
\newcommand{\Z}{\mathbb{Z}}
\newcommand{\N}{\mathbb{N}}

\newcommand{\V}{\mathcal{V}}
\newcommand{\W}{\mathcal{W}}

\newcommand{\dsum}{
\displaystyle\sum}
\newcommand{\dint}{
\displaystyle\int}
\newcommand{\la}{\langle}
\newcommand{\ra}{\rangle}

\newcommand{\bracket}[1]{\langle #1 \rangle}

\newcommand{\Rev}{R\'{e}v\'{e}sz}
% % % % % % % % % % % % % % % % % % % % % % % % % % %

\def\Prob{\textnormal{Prob}\,}
\def\supp{{\textnormal{supp}\,}}

\def\sinc{{\textnormal{sinc}\,}}

\newcommand{\HH}{\mathcal{H}}

\newcommand{\eps}{\varepsilon}
\newcommand{\Epsilon}{\mathcal{E}}
\newcommand{\finsum}[3]{\underset{#1=#2}{\overset{#3}\sum}}

\title[Convergence of Rearranged Fourier Series]{An Operator theoretic approach to the convergence of rearranged Fourier series
}
\author{Keaton Hamm, Ben Hayes, and Armenak Petrosyan}
\begin{document}
		%\date{}
		
\address{\textrm{(Keaton Hamm)}
Department of Mathematics,
University of Arizona,
Tucson, Arizona 85721 USA}
\email{hamm@math.arizona.edu}

\address{\textrm{(Ben Hayes)}
	Department of Mathematics,
	University of Virginia,
	Charlottesville, Virginia 22904 USA}
\email{brh5c@virginia.edu}

\address{\textrm{(Armenak Petrosyan)}
Computational and Applied Mathematics Group,
Oak Ridge National Laboratory,
 Oak Ridge, Tennessee 37830 USA}
\email{petrosyana@ornl.gov}

\keywords{Fourier series, Ulyanov's problem, Rearranged Fourier series, Operator topologies}
\subjclass [2010] {42A20, 42A32, 	46J10, 	47L05, 46E15, 46E20, }

\begin{abstract}

This article studies the rearrangement problem for Fourier series introduced by P.L. Ulyanov, who posed the question if every continuous function on the torus admits a rearrangement of its Fourier coefficients such that the rearranged partial sums of the Fourier series converge uniformly to the function.  The main theorem here gives several new equivalences to this problem in terms of the convergence of the rearranged Fourier series in the strong (equivalently in this case, weak) operator topologies on $\mathcal{B}(L_2(\TT))$.  Additionally, a new framework for further investigation is introduced by considering convergence for subspaces of $L_2$, which leads to many methods for attempting to prove or disprove Ulyanov's problem.  In this framework, we provide characterizations of unconditional convergence of the Fourier series in the SOT and WOT.  These considerations also give rise to some interesting questions regarding weaker versions of the rearrangement problem.

Along the way, we consider some interesting questions related to the classical theory of trigonometric polynomials.  All of the results here admit natural extensions to arbitrary dimensions.

\begin{comment}
Title Ideas:
\begin{enumerate}
\item On a problem of P.L. Ulyanov
\item On a problem of Ulyanov concerning the convergence of rearranged Fourier series

\end{enumerate}
\end{comment}
\end{abstract}

\maketitle

\begin{comment}
{\color{red} Comments:
\begin{itemize}
\item Notational questions:
\begin{itemize}
\item In Theorem 4: should we say $\sigma:\Z\to\Z$ or $\sigma\in S_\infty$?

\item Should we restate properties (vi) and (vii) in Theorem 4 as we did in Section 4, Definition 1?

\item Should we use a notation (such as $\mathcal{T}$) for {trigonometric polynomials} in Definition 1 or leave it as is?
\end{itemize}

\item Is there any reason to suspect that one of the equivalences in Theorem \ref{THMEquivalences} is equivalent to convergence for $f\in L_\infty$ rather than just continuous?  This would give a much stronger result if true, but I somehow doubt that it is.

\item We may need to rewrite my stuff on the topologies prior to Theorem 3.

\item Other comments/questions are in red below.
\end{itemize}

Outstanding Questions:
\begin{itemize}
\item Is $\W_{\sigma}$ an algebra for a fixed single $\sigma\in S_\infty$?  In particular, for $\sigma$ being the identity?
\end{itemize}
}
\end{comment}

\section{Introduction}

In his {\em M\'{e}moire sur la propogation de la chaleur dans les corps solides} \cite{Fourier}, Fourier considered the expansion of functions in terms of trigonometric series to solve the heat equation.  His ideas later exploded into use, and have since found a fundamental place in many mathematical and other scientific disciplines. While Fourier's initial works are not rigorous by today's standards, the intervening 200 years has seen many fascinating results come out of an attempt to provide rigor to the sense in which a function may be represented by its Fourier series.  Indeed the answer to this question contains a remarkable amount of subtlety and is fraught with danger even for the seasoned veteran.

To give a brief overview, recall that the Riesz--Fischer Theorem implies that the Fourier series for $f\in L_2$ converges to $f$ in $L_2$.  The extension to $L_p$ for $1<p<\infty$ is due to M. Riesz \cite{RieszLp}.  The endpoints do not share this property, i.e. there are functions in $L_1$ and $L_\infty$ (indeed even $C(\TT)$) whose Fourier series diverge.  In particular, a wonderful result of Kolmogorov \cite{Kolmogorov2} exhibits an $L_1$ function whose Fourier series diverges everywhere.

While convergence in norm is relatively well behaved, pointwise convergence of Fourier series is quite delicate.  Indeed, Kahane and Katznelson \cite{KahaneKatz} showed that for any Lebesgue measure zero set $E\subset\TT$, there exists a continuous function whose Fourier series diverges on $E$.  Thus even continuous functions do not have well-behaved Fourier series.  Nonetheless, Carleson's celebrated theorem \cite{Carleson} shows that the Fourier series for any $f\in L_2$ converges almost everywhere, the analogous result for $L_p$, $1<p<\infty$, being due to Hunt \cite{Hunt}.  For a prolonged discussion on the history of classical results concerning the convergence of Fourier series, the reader is invited to consult the historical notes of \cite[Chapter 4]{Grafakos}.

Returning to the case of $C(\TT)$, while we cannot hope for uniform convergence of the Fourier series of a continuous function, it would be interesting to know if we can permute the terms in the Fourier series prior to taking the partial sums in such a way as to guarantee uniform convergence.  This is the subject of the following problem due to P. L. Ulyanov (for formal definitions see Section \ref{SECDef}).

\begin{problem}[Ulyanov, \cite{Ulyanov}]
Is it true that for every $f\in C(\TT)$, there exists a permutation $\sigma:\Z\to\Z$ such that the rearranged partial sums $S_{\sigma,N}[f]$ converge to $f$ uniformly as $N\to\infty$?
\end{problem}

Despite the apparent difficulty of this problem there are, interestingly enough, two partial results that are indeed quite strong.  The first is due to S. G. \Rev:

\begin{theorem}[\Rev, \cite{revesz}]\label{THMRevesz}
For every $f\in C(\TT)$, there exists a permutation $\sigma:\Z\to\Z$ and a subsequence $\{N_k\}\subset\N$ such that $S_{\sigma,N_k}[f]$ converges to $f$ uniformly as $k\to\infty.$
\end{theorem}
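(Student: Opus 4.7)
The plan is to reduce the theorem to producing nested finite sets $B_1 \subset B_2 \subset \cdots$ in $\Z$ with $\bigcup_k B_k = \Z$ such that $P_{B_k}(f) := \sum_{n \in B_k} \hat f(n) e^{inx}$ converges uniformly to $f$. Given such a family, a permutation $\sigma$ realizing the conclusion is produced by listing $B_1$ first, then $B_2 \setminus B_1$, then $B_3 \setminus B_2$, and so on; with $N_k := |B_k|$ the initial segments give $S_{\sigma, N_k}[f] = P_{B_k}(f)$.

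The construction will use the de la Vallée Poussin means $V_M f$ (which converge uniformly to $f$ on $C(\TT)$, since the Vallée Poussin kernel has uniformly bounded $L^1$-norm) together with a probabilistic selection in a ``transition band''. Since $f\in L^2(\TT)$ we have $\sum_n |\hat f(n)|^2 < \infty$, so letting $a_k := \sum_{2^k < |n| \le 2^{k+1}} |\hat f(n)|^2$ we get $\sum_k a_k < \infty$, which forces $\liminf_k k a_k = 0$ and hence $\liminf_{M \to \infty} (\log M)\sum_{M < |n| \le 2M} |\hat f(n)|^2 = 0$. Pick integers $M_1 < M_2 < \cdots$ with $M_{k+1} \ge 2 M_k$ along which this $\liminf$ is realized, and put $B_k := [-M_k, M_k] \cup C_k$ where $C_k \subset \{n : M_k < |n| \le 2M_k\}$ is to be chosen. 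The growth assumption forces $C_k \subset [-M_{k+1},M_{k+1}] \subset B_{k+1}$, so nestedness $B_k \subset B_{k+1}$ is automatic and $\bigcup_k B_k = \Z$.

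Independently include each $n$ in the $k$-th transition band into $C_k$ with probability $p_n := 2 - |n|/M_k$, the Vallée Poussin weight. Then $\EE[P_{B_k}(f)] = V_{M_k} f$, and the centered random remainder
\[ P_{B_k}(f) - V_{M_k} f = \sum_{M_k < |n| \le 2M_k} (X_n - p_n)\, \hat f(n)\, e^{inx} \]
is a random trigonometric polynomial with independent, bounded, centered coefficients. A Salem--Zygmund style estimate then yields
\[ \EE \|P_{B_k}(f) - V_{M_k}f\|_\infty \lesssim \Bigl((\log M_k) \sum_{M_k < |n| \le 2 M_k} |\hat f(n)|^2\Bigr)^{1/2}, \]
which tends to $0$ by the choice of $M_k$. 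Combined with $\|V_{M_k} f - f\|_\infty \to 0$, Borel--Cantelli applied to the events $\{\|P_{B_k}(f) - V_{M_k} f\|_\infty > \epsilon_k\}$ with summable $\epsilon_k$ produces a deterministic sequence $(C_k)_k$ with $\|P_{B_k}(f) - f\|_\infty \to 0$.

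The main obstacle is the Salem--Zygmund sup-norm estimate for the random trigonometric polynomial: it is precisely this $\sqrt{\log M_k}$-type bound that upgrades the $L^2$-Parseval tail control to uniform control over the transition band. A secondary technical point is coordinating the random selections across $k$ into a single good event; here this is essentially automatic, since the disjointness of the transition bands (forced by $M_{k+1} \ge 2M_k$) makes the $C_k$ independent, so Borel--Cantelli goes through painlessly once the failure probabilities are arranged to be summable by thinning the $M_k$.
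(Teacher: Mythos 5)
Your proposal is correct and is essentially R\'ev\'esz's original argument, which the paper merely cites and sketches in one sentence: uniform convergence of de la Vall\'ee Poussin means, a random selection of the transition band whose expectation recovers the Poussin weight, a Salem--Zygmund sup-norm bound for the centered random remainder, and Borel--Cantelli after thinning the $M_k$. The only bookkeeping detail to add is that the paper's $S_{\sigma,N}$ sums over the $2N+1$ indices $|n|\le N$, so $|B_k|$ should be arranged to be odd, which is trivially done by adding or deleting one element of $C_k$ without affecting any estimate.
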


\Rev' argument relies on the fact that the de la Vall\'{e}e Poussin means of a continuous function converge uniformly to the function, and thus it suffices to demonstrate uniform convergence of the difference of $S_{\sigma,N_k}[f]$ and the Poussin mean $V_{N_k}[f]$.  Existence of such a permutation follows via a probabilistic argument.

The second result is due to S. V. Konyagin:

\begin{theorem}[Konyagin, \cite{Konyagin2003,konyagin2008}]\label{THMKonyagin}
If $f\in C(\TT)$ has modulus of continuity satisfying $\omega(f,\delta)=o(1/\log\log(1/\delta)),$ $\delta\to0^+$, then there exists a permutation $\sigma:\Z\to\Z$ such that $S_{\sigma,N}[f]$ converges to $f$ uniformly as $N\to\infty.$
\end{theorem}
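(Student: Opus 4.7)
My approach would follow Révész's strategy (Theorem \ref{THMRevesz}) of approximating via de la Vallée Poussin means, then use the modulus of continuity hypothesis to upgrade Révész's subsequential convergence to convergence along the full sequence $N\to\infty$.

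First, recall the Jackson-type bound $\|V_N[f] - f\|_\infty \le C\,\omega(f,1/N)$, which under the hypothesis yields $\|V_N[f] - f\|_\infty = o(1/\log\log N)$. Hence it would suffice to construct a permutation $\sigma$ together with an increasing sequence $M(N) \to \infty$ such that
\begin{equation*}
\|S_{\sigma,N}[f] - V_{M(N)}[f]\|_\infty = o(1/\log\log N).
\end{equation*}
The difference $S_{\sigma,N}[f] - V_{M(N)}[f]$ is a trigonometric polynomial whose $k$th Fourier coefficient is $\bigl(\mathbf{1}_{\{k\in\sigma(\{1,\dots,N\})\}} - v_k\bigr)\hat f(k)$, where $v_k$ denotes the Vallée Poussin multiplier (equal to $1$ on $|k|\le M(N)$, decreasing linearly on $M(N)<|k|\le 2M(N)$, and vanishing otherwise). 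So the task is to choose $\sigma$ and $M(N)$ so that the indicator of $\sigma(\{1,\dots,N\})$ approximates the Vallée Poussin profile strongly enough to produce small sup-norm.

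The construction of $\sigma$ I would try is block-by-block. Partition $\Z$ into dyadic blocks $B_j = \{k: 2^{j-1}\le |k| < 2^j\}$ and let $\sigma$ enumerate $B_1$, then $B_2$, then $B_3$, and so on. At the final step of block $B_j$ (say step $N_j$) we have $S_{\sigma,N_j}[f] = S_{2^j - 1}[f]$, which is close to $V_{2^{j-1}}[f]$ up to a correction supported in the transition region of $v_k$. The real work is to order the indices inside each block so that partial sums for $N$ strictly between $N_j$ and $N_{j+1}$ remain uniformly close to $V_{M(N)}[f]$. A natural attempt is to adapt Révész's probabilistic method within each block: a random ordering of $B_{j+1}$, combined with a Salem--Zygmund type estimate and a union bound over the $|B_{j+1}|\sim 2^j$ possible truncation points within the block, should give an $L_\infty$-bound on the maximal partial sum of the form $\sqrt{j\log j}\,\bigl\|(\hat f(k))_{k\in B_{j+1}}\bigr\|_{\ell^2}$, up to constants.

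The main obstacle is reconciling these estimates with the sharp $o(1/\log\log N)$ target. The double-logarithmic modulus of continuity condition is precisely what should absorb the loss from the Salem--Zygmund bound together with the union bound over partial sums inside a block (each contributing roughly a $\sqrt{\log}$ factor). To make this work one needs to control the $\ell^2$-decay of $\hat f$ on $B_j$ in terms of $\omega(f,\cdot)$, and then to ensure that the blockwise estimates can be concatenated consistently across all $N$ rather than only at the block boundaries $N_j$. Fully rigorizing this balance, very likely by replacing the purely probabilistic block ordering with a derandomized deterministic choice to avoid an additional logarithmic loss, would be the crux of the argument; in particular, producing a single permutation that simultaneously controls errors for every $N$ (not just a subsequence, which is what Révész's argument gives) is the essential additional difficulty that the hypothesis on $\omega(f,\cdot)$ is tailored to overcome.
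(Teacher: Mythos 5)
The paper does not prove this theorem; it is stated as a known result and cited to Konyagin's papers \cite{Konyagin2003,konyagin2008}, so there is no in-paper proof to compare your proposal against. What follows is a comparison against what is needed to make your sketch work.

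Your high-level framework --- approximate $f$ by de la Vall\'ee Poussin means, then construct $\sigma$ block-by-block so that partial sums inside a block track $V_{M(N)}[f]$ --- is the right strategic shape and is consistent with the flavor of Konyagin's argument. However, there are two genuine gaps. First, Salem--Zygmund controls random \emph{signs} $\sum \pm a_k e_k$, not random \emph{orderings} of a fixed sequence of terms: passing from a random permutation of a block to a Rademacher-type tail bound is not automatic. What is actually needed is a maximal inequality for partial sums of exchangeable (permuted) sequences --- the type of lemma R\'ev\'esz and Konyagin develop carefully --- and you cannot just substitute Salem--Zygmund. Second, and more seriously, the arithmetic you outline does not close. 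With dyadic blocks $B_j$, $|B_j|\sim 2^j$, your proposed per-block bound is roughly $\sqrt{j\log j}\,\|\hat f\|_{\ell^2(B_j)}$. Since $\|\hat f\|_{\ell^2(B_j)} \lesssim \omega(f,2^{-j+1})$, the hypothesis $\omega(f,\delta)=o(1/\log\log(1/\delta))$ gives only $\|\hat f\|_{\ell^2(B_j)} = o(1/\log j)$, so the per-block error is $o\bigl(\sqrt{j\log j}/\log j\bigr)=o\bigl(\sqrt{j/\log j}\bigr)$, which diverges rather than tends to zero. The Jackson-side estimate $\|V_{2^j}[f]-f\|_\infty = o(1/\log j)$ is therefore swamped. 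This means dyadic blocking with a crude union bound cannot be the right decomposition: either the block lengths must be chosen far more delicately, or the key combinatorial/probabilistic lemma must give a much better than $\sqrt{j\log j}$ factor (e.g.\ something closer to $\log\log |B_j|$), which is essentially the hard content of Konyagin's proof that your sketch defers.
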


Since the Dini--Lipschitz Criterion (e.g. \cite[Chapter II, Theorem 10.3]{Zygmund}) states that $\omega(f,\delta) = o(1/\log(1/\delta))$, $\delta\to0^+$ implies  $S_N[f]$ converges uniformly to $f$, Theorem \ref{THMKonyagin} provides a relatively broad class of functions with modulus of continuity decaying quite slowly which allows uniform convergence of the rearranged Fourier series.

Let us also note that a result of Men\v{s}ov \cite{Mensov} shows that any $f\in C(\TT)$ can be decomposed into the sum of two continuous functions $f_1+f_2$ for which there are two subsequences $\{N_k^{(i)}\}$, $i=1, 2$ such that $S_{N_k^{(i)}}[f_i]$ converges uniformly to $f_i$ for each $i$ (note that this is without rearrangement); however, the subsequences are different in general.

 Additionally, Pogosyan in \cite{Pogosyan1984} constructs a function $f\in L_1(\TT)$ such that any rearrangements of its Fourier series fails to converge in the $L_1$ norm.

 Another facet of the {problem} has been considered by McNeal and Zeytuncu \cite{mcneal2006note}, wherein they demonstrate the existence of a rearrangement $\sigma$ which preserves pointwise convergence of partial sums of Fourier series (i.e. if $S_N[f](t)$ converges, then so does $S_{\sigma,N}[f](t)$). Additionally, they show that convergence of rearranged partial sums under such a permutation need not imply that the non-rearranged partial sums converge.

The purposes of this article are to recast some of the equivalent versions of Ulyanov's problem in the setting of operators on $\mathcal{B}(L_2)$, to discuss some strategies in this setting for proving or disproving the problem, and to provide some results where something can be said about converge on subspaces of $\mathcal{B}(L_2)$.  This method of attack for the problem admits much more flexibility in that there are several natural topologies on $\mathcal{B}(L_2)$ which may be utilized, and we demonstrate that the {problem} may be reduced to considering convergence of multiplication operators induced by the partial sums of the rearranged Fourier series in the strong (or equivalently in this case, weak) operator topology. For the definition of the strong and weak operator topologies see Section \ref{SECDef}.

We briefly state our main results here. If $f\in L_{\infty}(\mathbb T),$ we define a bounded operator $M_{f}\in \mathcal{B}(L_{2}(\mathbb T))$ by $(M_{f}g)(t):=f(t)g(t)$ for all $g\in L_{2}(\mathbb T),t\in \mathbb T.$ Given a bijection $\sigma\colon \Z\to \Z,$ and a natural number $N,$ we define $S_{\sigma,N}[f]\in C(\mathbb T)$ by $(S_{\sigma,N}[f])(t):=\sum_{n=-N}^{N}\widehat{f}(n)e^{2\pi i \sigma(n) t}.$

\begin{theorem}\label{T:main theorem intro}
The following statements are equivalent.
\begin{enumerate}[(i)]
\item Ulyanov's problem has a positive answer.

\item For every $f\in C(\mathbb T),$ there is a bijection $\sigma\colon \Z\to \Z$ so that $M_{S_{\sigma,N}[f]}\to M_{f}$ in the strong operator topology. \label{I:SOT intro}

\item For every $f\in C(\mathbb T)$, there is a bijection $\sigma\colon \Z\to \Z$ so that $M_{S_{\sigma,N}[f]}\to M_{f}$ in the weak operator topology. \label{I:WOT intro}
\end{enumerate}
\end{theorem}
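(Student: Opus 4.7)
The plan is to close the cycle $(\mathrm{i}) \Rightarrow (\mathrm{ii}) \Rightarrow (\mathrm{iii}) \Rightarrow (\mathrm{i})$, with the first two implications being essentially built into the hierarchy of operator topologies, and all the substantive work concentrated in the final implication. For $(\mathrm{i}) \Rightarrow (\mathrm{ii})$, one uses that $\|M_g\|_{\mathcal{B}(L_2(\TT))} = \|g\|_\infty$ for $g \in L_\infty(\TT)$, so whenever $\sigma$ realizes uniform convergence $\|S_{\sigma,N}[f] - f\|_\infty \to 0$ it automatically realizes operator-norm convergence of $M_{S_{\sigma,N}[f]}$ to $M_f$, and in particular SOT convergence, with the same $\sigma$. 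The implication $(\mathrm{ii}) \Rightarrow (\mathrm{iii})$ is immediate.

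For the main direction $(\mathrm{iii}) \Rightarrow (\mathrm{i})$, I would first show that, in this setting, WOT convergence in fact forces SOT convergence with the same $\sigma$. Applying the uniform boundedness principle to the WOT-convergent sequence $\{M_{S_{\sigma,N}[f]}\}$ yields $\sup_N \|S_{\sigma,N}[f]\|_\infty = \sup_N \|M_{S_{\sigma,N}[f]}\|_{\mathrm{op}} < \infty$. On the other hand, writing $F_N := \sigma(\{-N,\dots,N\})$ for the set of frequencies appearing in the $N$-th rearranged partial sum and using Parseval, one has $\|S_{\sigma,N}[f] - f\|_2^2 = \sum_{n \notin F_N} |\widehat{f}(n)|^2 \to 0$ automatically, since the $F_N$ exhaust $\Z$. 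Testing SOT convergence first against $\xi \in L_\infty \cap L_2$ through the estimate $\|(S_{\sigma,N}[f] - f)\xi\|_2 \le \|\xi\|_\infty \|S_{\sigma,N}[f] - f\|_2 \to 0$ and then passing to general $\xi \in L_2$ by density, using the uniform $L_\infty$ bound to control the approximation error, upgrades the convergence to SOT.

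What remains is the genuinely hard step: manufacturing, from a rearrangement whose partial sums are merely $L_\infty$-bounded, a (possibly different) rearrangement $\tau$ for which $\|S_{\tau,N}[f] - f\|_\infty \to 0$. My plan here is to invoke \Rev's theorem (Theorem~\ref{THMRevesz}) to secure an auxiliary bijection $\sigma_\ast$ and a subsequence $(N_k)$ with $\|S_{\sigma_\ast,N_k}[f] - f\|_\infty \to 0$, and then construct $\tau$ so that $\tau(\{-N_k,\dots,N_k\}) = \sigma_\ast(\{-N_k,\dots,N_k\})$ at every scale $N_k$, while the order in which the remaining frequencies in $\sigma_\ast(\{-N_{k+1},\dots,N_{k+1}\}) \setminus \sigma_\ast(\{-N_k,\dots,N_k\})$ are inserted between indices $N_k$ and $N_{k+1}$ is dictated by the $L_\infty$-bounded $\sigma$ supplied by the hypothesis. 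The main obstacle is this combinatorial interpolation: $L_\infty$-boundedness together with $L_2$ convergence does not in general force uniform convergence (witness $e^{2\pi i N t}$), so converting \Rev's subsequential uniform convergence into full-sequence uniform convergence requires a delicate joint use of the two bijections to keep every intermediate partial sum close to $f$ in sup-norm.
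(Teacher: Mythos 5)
Your chain $(\mathrm{i})\Rightarrow(\mathrm{ii})\Rightarrow(\mathrm{iii})$ is correct, and the first move of your $(\mathrm{iii})\Rightarrow(\mathrm{i})$ --- extracting $\sup_N\|S_{\sigma,N}[f]\|_\infty<\infty$ from WOT convergence via the uniform boundedness principle, and then combining that bound with the trivial $L_2$ convergence on a dense/spanning set to upgrade to SOT --- is the same mechanism the paper packages as Theorem~\ref{minthm}. Up to this point you and the paper agree, and both reduce the theorem to: if $\sup_N\|S_{\sigma,N}[f]\|_\infty<\infty$ for some $\sigma$ (condition (iv) of Theorem~\ref{THMEquivalences}), then $f$ admits a uniformly convergent rearrangement (condition (i)).

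The gap is in how you close that final implication. Your interpolation plan --- anchor $\tau$ to $\sigma_\ast$ at the scales $N_k$, then fill in the block $\sigma_\ast(\{-N_{k+1},\dots,N_{k+1}\})\setminus\sigma_\ast(\{-N_k,\dots,N_k\})$ ``in the order dictated by $\sigma$'' --- does not work. The hypothesis $\sup_N\|S_{\sigma,N}[f]\|_\infty\le C$ controls only $\sigma$'s own prefix sums. The frequencies of a block determined by $\sigma_\ast$ are in general scattered throughout $\sigma$'s enumeration, so the partial sums of that block taken in $\sigma$-order are neither prefix sums $S_{\sigma,N}[f]$ nor differences of two such, and they inherit no sup-norm control whatsoever from $C$. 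What the gluing actually requires is a sup-norm bound on rearranged partial sums \emph{of each block polynomial} $T_k:=S_{\sigma_\ast,N_{k+1}}[f]-S_{\sigma_\ast,N_k}[f]$, proportional to $\|T_k\|_\infty\to0$. That is precisely the content of condition (v) (equivalently (vi)) in Theorem~\ref{THMEquivalences}: a single constant $C$ and, for every trigonometric polynomial $T$, a permutation $\sigma(T)$ with $\sup_N\|S_{\sigma(T),N}[T]\|_\infty\le C\|T\|_\infty$. One passes from your pointwise (in $f$) condition (iv) to the uniform polynomial condition (v) by a closed-graph/Banach--Steinhaus argument, and this whole equivalence $(\mathrm{i})\Leftrightarrow(\mathrm{iv})\Leftrightarrow(\mathrm{v})$ is exactly the part the paper attributes to \Rev. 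The paper's own block-gluing argument (the proof that (vi) implies (ii)) does run along the lines you sketch, but crucially it applies the polynomial-level bound to each $T_k$ to get a fresh inner rearrangement $\sigma_k$ per block. So to repair your proof, either cite the full \Rev\ equivalence as the paper does, or redo your block interpolation using the polynomial-level estimate applied to the $T_k$ rather than the single-$f$ bound coming from the $\sigma$ in the hypothesis.
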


The preceding theorem is interesting not just because it provides an operator theoretic equivalence to Ulyanov's problem, but also because it naturally leads to other operator theoretic questions which are tractable and have interesting solutions in terms of classical and well-studied function spaces. Namely, we have the following two results.%, which may be regarded as operator theoretic weakenings of Ulyanov's conjecture.

\begin{theorem}\label{T:second theorem intro}
Let $V$ be the set of functions $g\in L_{2}(\mathbb T)$ so that $M_{S_{\sigma,N}[f]}g\to fg$ for all $f\in  L_\infty(\mathbb T),$ and every bijection $\sigma\colon \Z\to\Z.$ Then $V=L_{\infty}(\mathbb T).$ The same conclusion holds if $L_\infty(\TT)$ is replaced with $C(\TT)$ in the definition of $V$.
\end{theorem}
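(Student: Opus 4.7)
The plan is to prove the two inclusions $L_\infty(\TT) \subseteq V$ and $V \subseteq L_\infty(\TT)$ separately. For the forward inclusion, given $g \in L_\infty(\TT)$, $f \in L_\infty(\TT)$, and a bijection $\sigma$, I would use
\[
\|M_{S_{\sigma,N}[f]}g - fg\|_2 \leq \|g\|_\infty \|S_{\sigma,N}[f] - f\|_2
\]
and observe that with $F_N := \sigma(\{-N,\dots,N\})$, Parseval gives $\|S_{\sigma,N}[f] - f\|_2^2 = \sum_{m \notin F_N} |\widehat f(m)|^2$, which tends to zero as $N \to \infty$ since $\widehat f \in \ell^2(\Z)$ and $\bigcup_N F_N = \Z$.

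For $V \subseteq L_\infty(\TT)$ I would argue by contrapositive: assume $g \in L_2(\TT) \setminus L_\infty(\TT)$ and produce $f \in C(\TT)$ and a bijection $\sigma$ with $M_{S_{\sigma,N}[f]}g \not\to fg$ in $L_2$. If $g \in V$, then Banach--Steinhaus applied to the operators $T_N^\sigma \colon L_\infty(\TT) \to L_2(\TT)$ defined by $T_N^\sigma(f) := S_{\sigma,N}[f]\cdot g$ gives $\sup_N \|T_N^\sigma\|_{L_\infty \to L_2} < \infty$ for every $\sigma$. My plan is to build a $\sigma$ that violates this by exploiting that $M_g \colon L_2 \to L_2$ is unbounded whenever $g \notin L_\infty$: pick unit vectors $h_k \in L_2$ with $\|gh_k\|_2 \geq 2k$; approximate each $h_k$ in $L_2$ by a trigonometric polynomial $p_k$ with Fourier support in a finite set $F_k$ (chosen large enough that $\|g(h_k - p_k)\|_2 \leq k$); then, after translating frequencies so the $F_k$ are pairwise disjoint and normalizing by $\|p_k\|_\infty$, obtain $L_\infty$-bounded trigonometric polynomials $\psi_k$ with $\|\psi_k g\|_2 \to \infty$. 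A gliding-hump assembly $f := \sum_k c_k \psi_k$ with $(c_k) \in \ell^1$ chosen small enough then produces a single $f \in C(\TT)$, and a diagonal enumeration builds a bijection $\sigma$ so that $\sigma(\{-N_j,\dots,N_j\}) \supseteq F_j$ for an increasing sequence $N_j$; the disjointness of the frequency supports forces $\|S_{\sigma,N_j}[f]g - fg\|_2$ to stay bounded below, contradicting the Banach--Steinhaus bound.

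The main obstacle is the quantitative step of transferring the unboundedness of $M_g$ on $L_2$ to the existence of $L_\infty$-normalized trigonometric polynomials $\psi_k$ with $\|\psi_k g\|_2$ arbitrarily large: since $\|p_k\|_\infty$ typically grows with $|F_k|$, the renormalization can wipe out the gain, so $F_k$ must be chosen carefully in coordination with the singular behavior of $g$. The essential analytic input is that the Toeplitz-like quadratic form $v \mapsto \int |v|^2 |g|^2\,dt$, restricted to the finite-dimensional span of $\{e^{2\pi i n t} : n \in F_k\}$, has unbounded operator norm as $F_k$ ranges over finite subsets of $\Z$ precisely when $g \notin L_\infty$, which reflects the failure of $|g|^2$ to be an $L_\infty$ multiplier. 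The extension to the variant with $L_\infty(\TT)$ replaced by $C(\TT)$ is immediate from the same argument, since the $f$ constructed above already lies in $C(\TT)$.
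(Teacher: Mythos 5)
Your forward inclusion $L_\infty(\TT)\subseteq V$ is correct and matches the paper's Proposition~\ref{CORgLinfty}. The reverse inclusion, however, contains a fatal gap at the central step.

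You propose to produce trigonometric polynomials $\psi_k$ that are ``$L_\infty$-bounded'' (normalized so $\|\psi_k\|_\infty \le 1$) but with $\|\psi_k g\|_2 \to \infty$. This is impossible for \emph{any} $g\in L_2(\TT)$: one always has
\[
\|\psi_k g\|_2^2 = \int_\TT |\psi_k|^2\,|g|^2\,dt \le \|\psi_k\|_\infty^2\,\|g\|_2^2,
\]
so $\|\psi_k g\|_2 \le \|g\|_2$ uniformly. The unboundedness of $M_g$ on $L_2$ that you invoke lives on $L_2$-normalized inputs, not $L_\infty$-normalized ones, and the two normalizations differ by exactly the factor you cannot afford to lose; this is not a ``quantitative obstacle to be handled with care,'' it is an obstruction in principle. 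A secondary issue: choosing $p_k$ so that $\|g(h_k-p_k)\|_2$ is small is an approximation in the weighted space $L_{2,g}$, which does not follow from $L_2$-density of trigonometric polynomials when $g\notin L_\infty$. Without the lower bound on $\|\psi_k g\|_2$, the gliding-hump assembly of $f=\sum_k c_k\psi_k$ with $(c_k)\in\ell_1$ produces a function whose rearranged partial sums are in fact uniformly bounded in $L_\infty$ (each block has $\|\psi_k\|_\infty\le1$ and the coefficients are summable), so $S_{\sigma,N}[f]g\to fg$ and there is no contradiction.

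The ingredient you are missing is that the amplification does not come from $f$ itself but from the partial sums of $f$: a sign flip can turn a trigonometric polynomial with $\|f\|_\infty\lesssim1$ (Rudin--Shapiro normalization, $\|\sum_0^N\eps_ne_n\|_\infty\le C_0\sqrt{N+1}$) into the Dirichlet kernel $D_N$, whose $L_\infty$ norm is $\sim N$ and whose mass concentrates in a window of width $\sim 1/N$. The paper's argument (Propositions~\ref{equivvstar}, \ref{equivLinfit} and Theorem~\ref{THMC0C1}) reduces to showing that for every set $E$ of positive measure there is such an $f$ with $\|f\|_\infty\le1$ and a sign pattern $\Epsilon$ with $\|T_\Epsilon[f]\mathbbm{1}_E\|_2\ge c$ uniformly; this is achieved by placing the Dirichlet peak at a Lebesgue density point of $E$. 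If $g\notin L_\infty$, one applies this with $E=\{|g|>k\}$ for each $k$ to defeat the Banach--Steinhaus bound. That concentration-at-a-density-point mechanism, together with the $\sqrt{N}$ gap between the Rudin--Shapiro $L_\infty$ bound and the Dirichlet $L_\infty$ size, is what your sketch needs but does not have.
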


The above theorem characterizes the largest subspace of $L_{2}(\mathbb T)$ on which any rearranged Fourier series converges in the strong operator topology restricted to that subspace.  Note that while Ulyanov's problem is somewhat naturally stated for the space of continuous functions, when speaking of multiplication operators, it makes somewhat more sense to consider the functions defining them to be in the larger space $L_\infty$.  Most of the results moving forward turn out to have the same answer whichever space is used; we indicate when this is the case in the statements of the theorems.

It is not as clear what the weak operator topology version of Theorem \ref{T:second theorem intro} should be. Given a subspace $V$ of $L_2,$ one may consider the weak operator topology coming from $V$ for operators on $L_2.$ This simply means that $T_{n}\to T$ in the weak operator topology coming from $V$ if and only if $\la{T_{n}v,w\ra}\to\la{Tv,w\ra}$ for all $v,w\in V.$ It is clear by Zorn's Lemma that there is \emph{a} maximal subspace $V$ of $L_2$ so that  $M_{S_{\sigma,N}[f]}\to M_{f}$ in the weak operator topology coming from $V$ for every bijection $\sigma\colon \Z\to \Z.$
However, there is no clear reason as to why such a subspace is unique. For example, it is not clear that if $V,W$ have the property that $M_{S_{\sigma,N}[f]}\to M_{f}$ in the weak operator topology coming from $V$ for every bijection $\sigma\colon \Z\to \Z,$ then $V+W$ has this property as well.
Nevertheless, we will in fact show that there is such a maximal subspace and that this subspace is $L_4(\TT).$

%Because of this, we switch to $L_1$ and consider the largest subspace $V_0$ of $L_1$ so that for every $\int S_{\sigma_N}[f]g\to \int fg$ for every $g\in V_0,$ and every bijection $\sigma\colon \Z\to \Z.$  Observe that for a subspace $E$ of $L_2,$ we have that $M_{\sigma,N}[f]\to M_{f}$ in the weak topology coming from $E$ if and only if $h\overline{k}\in V_0$ for all $h,k\in E.$ In this setting, we have the following.}

\begin{theorem}\label{T:fourth theorem intro}
\begin{enumerate}[(i):]
\item For every $g,h\in L_4(\TT),$ every $f\in L_\infty(\TT),$ and every bijection $\sigma\colon\Z\to \Z$ we have that
\[\la{M_{S_{\sigma,N}}[f]g,h\ra}\to \la{M_{f}g,h\ra}.\]
\item Conversely, let $E$ be a linear subspace of $L_2(\TT),$ and suppose that for every $g,h\in E,$ every $f\in L_\infty(\TT),$ and every bijection $\sigma\colon \Z\to \Z$ we have that
\[\la{M_{S_{\sigma,N}}[f]g,h\ra}\to \la{M_{f}g,h\ra},\]
then $E\subseteq L_4(\TT).$
\item  Statements $(i)$ and $(ii)$ both hold whenever $L_\infty(\TT)$ is replaced with $C(\TT)$.
\end{enumerate}
\end{theorem}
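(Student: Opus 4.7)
My plan is to handle (i) and the corresponding part of (iii) via direct Parseval/Cauchy--Schwarz estimates, and to prove (ii) (and its $C(\TT)$ variant) by a duality/closed graph argument followed by a Khintchine-type estimate using Poisson smoothing.

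For part (i), set $u := g\bar h$ and $v_m := \widehat{u}(-m)$. Since $g,h \in L_4$, H\"older's inequality gives $\|u\|_2 \leq \|g\|_4\|h\|_4 < \infty$, so $u \in L_2$ and $v \in \ell_2$; since $f \in L_\infty \subset L_2$, $\hat f \in \ell_2$ as well. A direct computation yields
\[
\la M_{S_{\sigma,N}[f]}g,h\ra = \int S_{\sigma,N}[f]\,u\,dt = \sum_{m \in A_N}\hat f(m) v_m,
\]
where $A_N$ is the finite subset of $\Z$ determined by $\sigma$ and $N$ and exhausts $\Z$ as $N \to \infty$. Cauchy--Schwarz gives $\sum_m |\hat f(m) v_m| \leq \|\hat f\|_2\|v\|_2 < \infty$, so $\sum_m \hat f(m) v_m$ converges absolutely; by Parseval it equals $\int fu = \la M_f g, h\ra$, and absolute convergence forces the partial sums over $A_N$ to converge to the full sum.

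For part (ii), fix $g \in E$ and apply the hypothesis with $h = g$, so that $u := |g|^2 \in L_1$ and $v_n := \widehat{|g|^2}(-n) \in \ell_\infty$; the goal is to prove $v \in \ell_2$, which by Parseval is equivalent to $g \in L_4$. Applying the hypothesis over all bijections $\sigma$ forces $\sum_n \hat f(n) v_n$ to converge unconditionally for every $f \in L_\infty(\TT)$; unconditional convergence in $\CC$ is equivalent to absolute convergence, so $\sum_n |\hat f(n) v_n| < \infty$ for every such $f$. By the closed graph theorem, the map $L_\infty(\TT) \to \ell_1(\Z)$, $f \mapsto \hat f \cdot v$, is bounded: $\sum_n |\hat f(n) v_n| \leq C\|f\|_\infty$. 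Passing to the adjoint, for every sign sequence $\epsilon \in \{-1,+1\}^\Z$ there is a measure $\mu_\epsilon$ on $\TT$ with $\widehat{\mu_\epsilon}(-n) = \epsilon_n v_n$ and $\|\mu_\epsilon\|_{M(\TT)} \leq C$.

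The main technical step, which I expect to be the principal obstacle, is to upgrade the total-variation bound on the random measures $\mu_\epsilon$ to $\ell_2$-summability of $v$. For this I would convolve with the Poisson kernel $P_r$ for $0 < r < 1$: the smoothed function $\mu_\epsilon * P_r$ lies in $L_\infty(\TT)$, is given by the trigonometric series $\sum_k \epsilon_k v_k r^{|k|}e^{-2\pi i k t}$, and satisfies $\|\mu_\epsilon * P_r\|_{L_1} \leq \|\mu_\epsilon\|_{M(\TT)}\|P_r\|_{L_1} \leq C$. By the scalar Khintchine inequality applied pointwise in $t$, $\EE_\epsilon|(\mu_\epsilon * P_r)(t)|$ is comparable (up to absolute constants) to $\bigl(\sum_k |v_k|^2r^{2|k|}\bigr)^{1/2}$, independently of $t$; integrating in $t$ gives $\bigl(\sum_k |v_k|^2 r^{2|k|}\bigr)^{1/2} \leq C'$ uniformly in $r \in (0,1)$, and monotone convergence as $r \to 1^-$ produces $v \in \ell_2$. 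Hence every $g \in E$ lies in $L_4$, i.e.\ $E \subseteq L_4$. Part (iii) is then obtained by repeating the above arguments with $C(\TT)$ in place of $L_\infty(\TT)$: for (i) this is immediate since $C(\TT) \subset L_\infty$, and for (ii) the closed graph theorem applies equally well to the Banach space $C(\TT)$, whose dual $M(\TT)$ is exactly the space in which the Poisson smoothing argument naturally lives; no other step of the argument requires modification.
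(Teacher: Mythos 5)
Your proof of part (i) is the same computation the paper makes (H\"older puts $g\bar h$ in $L_2$, then Cauchy--Schwarz on the Fourier coefficients forces absolute convergence of the scalar series), and part (iii) is treated identically. Part (ii), however, is where you genuinely diverge, and your route is correct. Both you and the paper reduce to the same intermediate statement: if $v\in\ell_\infty(\Z)$ is such that $\widehat f\cdot v\in\ell_1(\Z)$ for every $f$ (in $L_\infty$ or $C(\TT)$), then in fact $v\in\ell_2(\Z)$; and both invoke the closed graph theorem to see that the multiplier $f\mapsto \widehat f\cdot v$ is bounded. From there the approaches separate. The paper introduces the auxiliary space $\widetilde{\V}$ of all $g\in L_1$ whose Fourier coefficients produce such a bounded multiplier, and proves $\widetilde{\V}=L_2$ by invoking Grothendieck's factorization theorem: the bounded map $C(\TT)\to M(\Z)$ factors through Hilbert spaces via a pair of probability measures $\mu\in\Prob(\TT),\eta\in\Prob(\Z)$, and evaluating the factorization on exponentials gives $|\widehat g(n)|^2\lesssim\eta(n)$, whence $\widehat g\in\ell_2$. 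You instead pass to the adjoint to get uniformly bounded measures $\mu_\epsilon\in M(\TT)$ with $\widehat{\mu_\epsilon}(-n)=\epsilon_n v_n$, Poisson-smooth to land in $L_1(\TT)$, and then apply the $L^1$ Khintchine inequality pointwise plus Fubini to obtain $\bigl(\sum_k |v_k|^2 r^{2|k|}\bigr)^{1/2}\lesssim C$ uniformly in $r$, and finish with monotone convergence. This is noticeably more elementary: the scalar Khintchine inequality (a classical computation) replaces the Grothendieck theorem. It is also more self-contained in that it does not require the $\widetilde{\V}$-bookkeeping; you go directly from the hypothesis with $h=g$ to $|g|^2\in L_2$, hence $g\in L_4$.

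One small remark on precision: the adjoint of the bounded map $T\colon L_\infty(\TT)\to\ell_1(\Z)$ takes values in $L_\infty(\TT)^\ast$, which is larger than $M(\TT)$; to land $T^*\epsilon$ in $M(\TT)$ one should first restrict $T$ to $C(\TT)$ (whose dual is exactly $M(\TT)$), which is harmless since the restriction is obviously still bounded and its adjoint recovers the same Fourier coefficients $\epsilon_n v_n$ via pairing with $e_n$. You implicitly acknowledge this in your discussion of part (iii). In fact the $L_\infty$ form of part (ii) follows immediately from the $C(\TT)$ form, since the $L_\infty$ hypothesis is strictly stronger; so it would be cleanest simply to run the entire argument for $C(\TT)$ and note that the $L_\infty$ statement is then automatic. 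With that minor rephrasing, your proof is complete and correct.
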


There is a natural dual problem to the above: consider the largest subspace $W$ of $L_\infty(\mathbb T)$ so that $M_{S_{\sigma,N}[f]}\to M_{f}$ in the strong, respectively weak, operator topology for every bijection $\sigma\colon \Z\to \Z.$ In both topologies, the subspace $W$ also ends up being a natural and well-studied object: the Wiener algebra of functions with absolutely summable Fourier coefficients.

\begin{theorem}\label{T:third theorem intro}
\begin{enumerate}[(i)]
\item Let $W$ be the set of $f\in L_\infty(\mathbb T)$ so that $M_{S_{\sigma,N}}[f]\to M_{f}$ in the strong operator topology for every bijection $\sigma\colon \Z\to \Z.$ Then $W=A(\mathbb T).$
\item Let $\widetilde{W}$ be the set of all $f\in L_\infty(\mathbb T)$ so that $M_{S_{\sigma,N}}[f]\to M_{f}$ in the weak operator topology for every bijection $\sigma\colon \Z\to \Z.$ Then $\widetilde{W}=A(\mathbb T).$
\item Likewise, if $W$ and $\widetilde{W}$ are the set of continuous functions whose Fourier series converge unconditionally in the strong, respectively weak, operator topologies, then $W=\widetilde{W}=A(\TT)$.
\end{enumerate}
\end{theorem}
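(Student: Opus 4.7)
The plan is to establish $A(\TT)\subseteq W\subseteq\widetilde{W}\subseteq A(\TT)$ and then deduce (iii). The inclusion $A(\TT)\subseteq W$ is immediate: if $\sum_n |\widehat f(n)|<\infty$ and $F_N:=\sigma(\{-N,\ldots,N\})$, then $\|S_{\sigma,N}[f]-f\|_\infty\le \sum_{n\notin F_N}|\widehat f(n)|\to 0$ for every bijection $\sigma$, yielding operator-norm (hence SOT) convergence of $M_{S_{\sigma,N}[f]}$ to $M_f$. Since SOT implies WOT, $W\subseteq\widetilde W$ is trivial, so only $\widetilde W\subseteq A(\TT)$ remains.

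Fix $f\in\widetilde W$. For each $\sigma$, WOT convergence together with the uniform boundedness principle forces $\sup_N\|S_{\sigma,N}[f]\|_\infty=\sup_N\|M_{S_{\sigma,N}[f]}\|<\infty$. The crucial first step is to upgrade this per-$\sigma$ bound to the uniform bound $\sup_F\|S_F[f]\|_\infty<\infty$ over all finite $F\subseteq\Z$, where $S_F[f]:=\sum_{n\in F}\widehat f(n)e^{2\pi int}$. I plan to argue by contrapositive: if $\sup_F\|S_F[f]\|_\infty=\infty$, construct a bijection $\sigma$ with $\sup_N\|S_{\sigma,N}[f]\|_\infty=\infty$. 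Build the images $G_k:=\sigma(\{-N_k,\ldots,N_k\})$ inductively. Given $G_{k-1}$ of odd cardinality $2N_{k-1}+1$, the supremum $\sup\{\|S_B[f]\|_\infty:B\subseteq\Z\setminus G_{k-1}\text{ finite}\}$ remains infinite, since deleting the fixed block $G_{k-1}$ from any finite $F$ costs at most $\|S_{G_{k-1}}[f]\|_\infty$. Pick $B_k\subseteq\Z\setminus G_{k-1}$ with $\|S_{B_k}[f]\|_\infty$ large enough that $\|S_{G_{k-1}\cup B_k}[f]\|_\infty\ge \|S_{B_k}[f]\|_\infty-\|S_{G_{k-1}}[f]\|_\infty\ge k$. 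Then adjoin the smallest integer absent from $G_{k-1}$ (to guarantee $\bigcup_k G_k=\Z$) and, if necessary, one further element to arrange $|B_k|$ even, so that $|G_{k-1}\cup B_k|=2N_k+1$; each such correction perturbs the relevant norm by at most $\|\widehat f\|_{\ell^\infty}$, which is absorbed by taking $B_k$ with a slightly larger margin. Extending $\sigma$ bijectively from $\{-N_k,\ldots,N_k\}\setminus\{-N_{k-1},\ldots,N_{k-1}\}$ onto the new indices completes the induction and produces $\sup_N\|S_{\sigma,N}[f]\|_\infty=\infty$, the sought contradiction.

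With $\sup_F\|S_F[f]\|_\infty\le M$ in hand, the conclusion $f\in A(\TT)$ follows by a classical sign-pairing argument. For any finite $F$ and any signs $(\epsilon_n)\in\{-1,1\}^F$, the splitting $F=F_+\sqcup F_-$ gives $\|\sum_{n\in F}\epsilon_n\widehat f(n)e^{2\pi int}\|_\infty\le \|S_{F_+}[f]\|_\infty+\|S_{F_-}[f]\|_\infty\le 2M$. Choosing $\epsilon_n=\mathrm{sign}(\mathrm{Re}\,\widehat f(n))$ and evaluating at $t=0$, the real part of the resulting value equals $\sum_{n\in F}|\mathrm{Re}\,\widehat f(n)|$, which is therefore at most $2M$; the analogous choice for the imaginary part gives $\sum_{n\in F}|\mathrm{Im}\,\widehat f(n)|\le 2M$. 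Letting $F$ exhaust $\Z$ yields $\sum_n|\widehat f(n)|\le 4M<\infty$, so $f\in A(\TT)$, proving (i) and (ii).

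For (iii), unconditional convergence of $\sum_n\widehat f(n)M_{e_n}$ in SOT (resp.\ WOT) is by definition convergence of $M_{S_{\sigma,N}[f]}$ in that topology for every bijection $\sigma$, and pairing with the constant function $1$ identifies the common limit as $M_f$. Hence the sets in (iii) coincide with $W\cap C(\TT)$ and $\widetilde W\cap C(\TT)$; since $A(\TT)\subseteq C(\TT)$, parts (i) and (ii) force both to equal $A(\TT)$. The main obstacle is the inductive bijection construction: the odd-cardinality constraint on $G_k$, the need to exhaust $\Z$, and the disjointness requirement must all coexist with the goal of blowing up the partial-sum norms, but since each bookkeeping perturbation is only $O(\|\widehat f\|_{\ell^\infty})$ it is absorbed by the margin used when selecting $B_k$.
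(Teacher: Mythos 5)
Your proposal is correct, and the overall skeleton ($A(\TT)\subseteq W\subseteq\widetilde W\subseteq A(\TT)$, double uniform-boundedness to get a per-$\sigma$ bound, then a global bound on arbitrary finite partial sums, and finally summability of $\widehat f$) matches the paper's. The genuinely different part is how you pass from the bound $\sup_F\|S_F[f]\|_\infty\le M$ to $f\in A(\TT)$. The paper first reduces to the case of non-negative Fourier coefficients via an algebraic decomposition (real and imaginary parts of $\widehat f$, then positive and negative parts, using the closure of $\W_{S_\infty}$ under restricting the coefficient support), and then invokes a separate lemma of independent interest: if $f\in L_\infty$ and $\widehat f\geq 0$ pointwise, then $f\in A(\TT)$, proved by a Plancherel estimate showing $\|S_N[f]\|_\infty\le\|f\|_\infty$ and evaluating at $t=0$. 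You short-circuit all of this with the sign-pairing argument: splitting $F$ by the sign of $\mathrm{Re}\,\widehat f(n)$ (and then $\mathrm{Im}\,\widehat f(n)$) and evaluating at $t=0$ gives $\sum_F|\mathrm{Re}\,\widehat f(n)|\le 2M$ and likewise for the imaginary part, hence $\sum_n|\widehat f(n)|\le 4M$ directly. Your route is more elementary — no Plancherel, no decomposition lemmas, no intermediate Wiener-algebra lemma — while the paper's choice pays off by isolating the non-negative-coefficients lemma as a reusable statement. Your contrapositive bijection construction in the first step is essentially an explicit unpacking of what the paper defers to ``standard Banach space theory'' about partial sums of unconditionally convergent series, and the bookkeeping you describe (parity of $|G_k|$, exhaustion of $\Z$, the $O(\|\widehat f\|_{\ell^\infty})$ corrections absorbed by the margin) is handled correctly. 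One cosmetic remark: to identify the limit in part (iii) it is cleaner to pair against all exponentials $e_n,e_m$ (or note that $S_{\sigma,N}[f]g\to fg$ in $L_2$ for $g\in L_\infty$) rather than only against the constant function, but this does not affect correctness.
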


Theorem \ref{T:third theorem intro} is interesting in light of the Dvoretsky-Rogers theorem which states that a Banach space on which every unconditionally convergent series is absolutely convergent is necessarily finite-dimensional. The above theorem shows that even though we cannot hope to deduce absolute convergence from unconditional convergence for a general series in $L_{\infty}(\mathbb T),$ it is nevertheless true that absolute convergence and unconditional convergence are equivalent for Fourier series. One interesting aspect of Theorem \ref{T:third theorem intro} is that $W$ as defined in Theorem \ref{T:third theorem intro} is, in fact, a subalgebra of $C(\mathbb T)$, and this is far from obvious given its definition as the set of functions on which the Fourier series converges unconditionally.

%For example, it is easy to establish from the above theorem that $L_4(\TT)$ is a maximal subspace of $L_2(\TT)$ so that $M_{S_{\sigma,N}[f]}\to M_{f}$ for every bijection $\sigma\colon \Z\to \Z$ in the weak operator topology coming from $L_4(\TT).$

In light of Theorems \ref{T:second theorem intro} and \ref{T:third theorem intro}, it is natural to consider triples $(\Sigma,\W,\V)$ where

\begin{itemize}
\item $\V$ is a linear subspace of $L_{2}(\mathbb T),$
\item $\W$ is a linear subspace of $L_\infty(\mathbb T)$ (or $C(\TT)$),
\item $\Sigma$ is a subset of the symmetric group on $\Z$
\item For every $f\in \W,g\in \V,\sigma\in \Sigma,$ we have $M_{S_{\sigma,N}[f]}g\to fg$.
\end{itemize}
We refer the reader to Section \ref{SECFramework}, for interesting questions in this direction and the difficulty involved in solving them.

The rest of the paper develops as follows: Section \ref{SECDef} contains some basic definitions and preliminaries necessary for the sequel.  In Section \ref{SECEquivalences}, we state our first main theorem, Theorem \ref{THMEquivalences}, which gives several new equivalences to Ulyanov's problem, a sample of which is Theorem \ref{T:main theorem intro}.  In Section \ref{SECRelaxations} we provide some relaxations of the problem based upon some of the criteria in Theorem \ref{THMEquivalences}, which give a roadmap for finding a counterexample to Ulyanov's problem; additionally, we pose some interesting related questions.  Section \ref{SECFramework} exhibits an overarching framework for further investigation into aspects of the rearrangement problem and provides another equivalence to Ulyanov's problem (Remark \ref{REMW}).  Several natural questions arise from our general framework which are further investigated in Sections \ref{SECV}, \ref{SECW}, and \ref{SECWOT}.  Theorems \ref{T:second theorem intro} and \ref{T:third theorem intro} arise from this framework.

%The rest of the paper develops as follows: Section \ref{SECDef} contains some basic definitions and preliminaries necessary for the sequel.  In Section \ref{SECEquivalences}, we state our first main theorem, Theorem \ref{THMEquivalences}, which gives several new equivalences to Ulyanov's , a sample of which is that a positive answer to the  is equivalent to the existence of a permutation for which the rearranged partial sums of the Fourier series converge in the WOT (equivalently SOT) on $\mathcal{B}(L_2)$.  In Section \ref{SECRelaxations} we provide some relaxations of the problem based upon some of the criteria in Theorem \ref{THMEquivalences}, which give a roadmap for finding a counterexample to Ulyanov's problem; additionally, we pose some interesting related questions.  Section \ref{SECFramework} exhibits an overarching framework for further investigation into aspects of the rearrangement problem and provides another equivalence to Ulyanov's problem (Remark \ref{REMW}).  Several natural questions arise from our general framework which are further investigated in Sections \ref{SECV} and \ref{SECW}.  In particular, we prove a characterization of the largest subspace of $C(\TT)$ for which the Fourier series converge unconditionally in SOT; this subspace turns out to be the Wiener algebra $A(\TT)$.  Similarly, we characterize the largest subspace, $\mathcal{V}$, of $L_2$ for which the Fourier series of any continuous function converges unconditionally in the SOT restricted to $\mathcal{V}$; it turns out that this subspace is simply $L_\infty$.

For clarity, we present most of the results in dimension 1; however, once the conditions are translated appropriately, everything carries over to arbitrary dimension.  To wit, Section \ref{SECMulti} enumerates the multivariate extensions of the results from previous sections (in particular note that Theorems \ref{T:main theorem intro}, \ref{T:second theorem intro}, and \ref{T:third theorem intro} hold in arbitrary dimension), and the paper ends with a brief conclusion.

\section{Definitions and Basic Facts}\label{SECDef}

Given a function $f\in L_1(\TT)$, define its $n$--th  Fourier coefficient, for $n\in\Z$, via
\[\widehat{f}(n):=\dint_\TT f(t)e^{-2\pi int}dt,\]
and thus the $N$--th partial sum of its Fourier series, for $N\in\N$, via
\[S_N[f](t):=\sum_{|n|\leq N}\widehat{f}(n)e^{2\pi int},\quad t\in\TT.\]
Sometimes for convenience, we will write $e_n$ for the exponential function $e^{2\pi int}$.

Given a permutation of the integers, i.e. a bijection $\sigma:\Z\to\Z$, define the rearranged partial sum of the Fourier series of $f\in L_1(\TT)$ by
\[ S_{\sigma,N}[f](t):=\sum_{|n|\leq N}\widehat{f}(\sigma(n))e^{2\pi i\sigma(n)t},\quad t\in\TT.\]

Our interest in the uniform convergence of the rearranged partial sums will lead us to considering multiplication operators: given $g\in L_\infty(\TT)$, define $M_g:L_2(\TT)\to L_2(\TT)$ via $f\mapsto gf$, and note that $\|M_g\|=\|g\|_{\infty}$.

Of interest to our analysis will be three natural topologies on $\mathcal{B}(\mathcal{H})$ for $\mathcal{H}$ an infinite dimensional, separable Hilbert space: the operator norm topology, the {\em Strong Operator Topology} (SOT), which is the topology of pointwise convergence of operators with $T_n\to T$ in SOT if and only if $\|T_n(h)-T(h)\|_2\to0$ for every $h\in\mathcal{H}$, and finally the {\em Weak Operator Topology} (WOT), in which $T_n\to T$ in WOT if and only if $\bracket{T_n(f)-T(f),g}\to0$ for every $f,g\in\mathcal{H}$. We will also consider WOT and SOT topologies restricted to subspaces of $ \HH $.

To avoid cumbersome terminology, we will at times use the abuse of language to say that a sequence of continuous functions converges in SOT (WOT) to mean that the associated multiplication operators as defined above converge in SOT (WOT).

As a basic tool for finding equivalences to Ulyanov's problem, we use the following well-known fact about convergence in SOT or WOT.

	\begin{theorem} \label{minthm}
		Let $T_N:\HH\rightarrow \HH$ be a sequence of bounded linear operators, $T:\HH\rightarrow \HH$ be another bounded operator, and $V$ be a subset of $\HH$ with dense linear span. Then the following are equivalent:
		\begin{enumerate}[(i)]
			\item $ T_N $ converges to $ T $ in the strong (weak) operator topology of $B(\HH)$.
			\item The norms $\|T_N\|$ are uniformly bounded and the $T_N  $   converges to $T$ in the strong (weak) operator topology on $V $.
		\end{enumerate}
	\end{theorem}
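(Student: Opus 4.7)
The plan is to treat the two implications separately, handling the SOT and WOT cases in parallel since the arguments are structurally identical.

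For $(i)\Rightarrow(ii)$, I would invoke the Banach--Steinhaus (uniform boundedness) principle. If $T_N\to T$ in SOT, then for each $h\in\HH$ the sequence $\{T_Nh\}$ is norm convergent, hence bounded; uniform boundedness then yields $\sup_N\|T_N\|<\infty$. In the WOT case, weak convergence of $\{T_Nh\}$ for each $h$ implies $\sup_N\|T_Nh\|<\infty$ (since weakly convergent sequences are bounded), and then a second application of uniform boundedness gives $\sup_N\|T_N\|<\infty$. Convergence on $V$ is trivial since $V\subseteq\HH$.

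For the main direction $(ii)\Rightarrow(i)$, write $C:=\sup_N\|T_N\|<\infty$ and note first that convergence on $V$ extends by linearity to convergence on $\spn(V)$: for any finite linear combination $v=\sum_j c_j v_j$ with $v_j\in V$, the identity $T_Nv-Tv=\sum_j c_j(T_Nv_j-Tv_j)$ reduces the problem to finitely many convergent sequences. For the SOT case, given $h\in \HH$ and $\eps>0$, use density of $\spn(V)$ to choose $v\in\spn(V)$ with $\|h-v\|<\eps$, and estimate
\[\|T_Nh-Th\|\leq \|T_N(h-v)\|+\|T_Nv-Tv\|+\|T(v-h)\|\leq (C+\|T\|)\eps+\|T_Nv-Tv\|,\]
where the last term tends to $0$ as $N\to\infty$. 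Letting $N\to\infty$ and then $\eps\to 0$ yields $T_Nh\to Th$.

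For the WOT case, a symmetric two-sided approximation is needed: given $h,k\in\HH$ and $\eps>0$, pick $v,w\in\spn(V)$ with $\|h-v\|<\eps$ and $\|k-w\|<\eps$, and split
\[\la T_Nh-Th,k\ra=\la T_N(h-v),k\ra+\la T_Nv-Tv,w\ra+\la T_Nv-Tv,k-w\ra+\la T(v-h),k\ra.\]
The first, third, and fourth terms are bounded by multiples of $\eps$ using $\sup_N\|T_N\|\leq C$ and Cauchy--Schwarz, while the second tends to $0$ by the hypothesis on $V$. No real obstacle arises; the only point requiring care is being sure to bound $\langle T_Nv-Tv,k-w\rangle$ by $2C\|v\|\eps$ using $\sup_N\|T_N\|\leq C$ and $\|T\|\leq C$ (which itself follows from weak lower semicontinuity of the norm applied to $T_N\to T$ in WOT, or simply by assuming it, since $T$ is given bounded).
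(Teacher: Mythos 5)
Your proof is correct and is the standard argument; the paper does not supply its own proof but instead cites \cite[Proposition 1.9.20 and Exercise 1.9.25]{TTAO}, and your reasoning is exactly what one finds there (Banach--Steinhaus for $(i)\Rightarrow(ii)$, and a density/three-term or four-term split controlled by the uniform bound for $(ii)\Rightarrow(i)$). One small caveat: in the parenthetical remark justifying $\|T\|\leq C$ in the WOT case via ``weak lower semicontinuity of the norm applied to $T_N\to T$ in WOT,'' you cannot invoke WOT convergence on all of $\HH$ inside the $(ii)\Rightarrow(i)$ direction, since that is precisely what is being proved; but this is harmless, as your fallback (use the given bound $\|T\|<\infty$ and replace $2C$ by $C+\|T\|$) is all that is needed, or one can first note $\|T\|\leq C$ from WOT convergence on the dense subspace $\spn(V)$ together with a density argument.
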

The proof of this theorem is a basic exercise; for example it can be found in \cite[Proposition 1.9.20 and Exercise 1.9.25]{TTAO}.

For notational convenience, we will denote by $S_\infty$ the symmetric group over the integers, i.e. the group of all bijections from $\Z\to\Z$, and elements of this group will be called {\em permutations}, or {\em rearrangements},  of the integers.  Additionally, we will typically write $L_p$ rather than $L_p(\TT)$.  As is customary, $C$ will denote a constant, whose specific value may change from line to line, and subscripts will be used to make dependence of the constant on a given parameter explicit.%Additionally, we call a permutation {\em symmetric} provided $\sigma(-k)=\sigma(k)$ for all $k\in\Z$.

\section{Equivalences of the Ulyanov problem}\label{SECEquivalences}

Inspired by the work of \Rev\, we find several equivalent conditions to Ulyanov's problem.

\begin{theorem}\label{THMEquivalences}
The following are equivalent:
\begin{enumerate}[(i)]
\item For every $f\in C(\TT)$, there exists a permutation $\sigma:\Z\to\Z$ such that $\|S_{\sigma,N}[f]-f\|_\infty\to0$ (i.e. Ulyanov's problem has a positive answer).
\item For every $f\in C(\TT)$, there exists a permutation $\sigma:\Z\to\Z$ such that $M_{S_{\sigma,N}[f]}\to M_f$ in the SOT on $\mathcal{B}(L_2)$.
\item For every $f\in C(\TT)$, there exists a permutation $\sigma:\Z\to\Z$ such that $M_{S_{\sigma,N}[f]}\to M_f$ in the WOT on $\mathcal{B}(L_2)$.
\item For every $f\in C(\TT)$, \[\underset{\sigma\in S_\infty}\inf\;\underset{N}\sup\;\|S_{\sigma,N}[f]\|_\infty<\infty.\]
\item There exists a constant $C>0$ such that for every trigonometric polynomial $T$,
\[\underset{\sigma\in S_\infty}\inf\;\underset{N}\sup\;\|S_{\sigma,N}[T]\|_\infty\leq C\|T\|_\infty.\]
\item There exist maps $C:L_2\to\R_+$, and $\sigma:\{\text{trigonometric polynomials}\}\to S_\infty$ such that for every $g\in L_2$ and for every trigonometric polynomial $T$, \label{I:rearrange just trig polys}
\[
\underset{N}\sup\;\|S_{\sigma(T),N}[T]g\|_{2}\leq C(g)\|T\|_\infty.
\]
%There exist maps $C:L_2\to\R_+$, and $\sigma:\{\text{trigonometric polynomials}\}\to S_\infty$ such that for every $g\in L_2$ and for every trigonometric polynomial $T$,
%\[
%\underset{N}\sup\;\|S_{\sigma(T),N}[T]g\|_{2}\leq C(g)\|T\|_\infty.
%\]
\item There exist maps $C:L_1\to\R_+$ and $\sigma:\{\text{trigonometric polynomials}\}\to S_\infty$ such that for every $g\in L_1$ and every trigonometric polynomial $T$,
\[\underset{N}\sup\;|\bracket{S_{\sigma(T),N}[T],g}|\leq C(g)\|T\|_\infty. \]
%For every $g\in L_1(\TT)$, there exists a $C_g>0$ and for every trigonometric polynomial $T$, there exists a permutation $\sigma:\Z\to\Z$ ($C_g$ and $T$ are independent)  such that
%\[\underset{N}\sup\;|\bracket{S_{\sigma,N}[T],g}|\leq C_g\|T\|_\infty.\]
\end{enumerate}
\end{theorem}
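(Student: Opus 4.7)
The plan is to organize the seven conditions into two clusters, $(\mathrm{i})$--$(\mathrm{iv})$ and $(\mathrm{v})$--$(\mathrm{vii})$, establish the equivalences within each cluster by soft functional-analytic arguments, and then bridge the two clusters with a blockwise construction in the spirit of \Rev{} and its converse uniform boundedness argument. For the first cluster, the chain $(\mathrm{i})\Rightarrow(\mathrm{ii})\Rightarrow(\mathrm{iii})\Rightarrow(\mathrm{iv})$ is immediate: uniform convergence of $S_{\sigma,N}[f]$ to $f$ is operator-norm convergence of the multiplication operators since $\|M_g\|=\|g\|_\infty$, which implies SOT; SOT trivially implies WOT; and WOT-convergent sequences are uniformly operator-norm bounded by the uniform boundedness principle, giving $(\mathrm{iv})$. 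The key closing implication $(\mathrm{iv})\Rightarrow(\mathrm{ii})$ invokes Theorem \ref{minthm}. The crucial observation is that for \emph{every} permutation $\sigma$, $S_{\sigma,N}[f]\to f$ in $L_2$: since $\{e_n\}$ is an orthonormal basis and $\sum|\widehat{f}(n)|^2<\infty$, the expansion converges unconditionally in $L_2$. Hence for every $g\in L_\infty$, which is dense in $L_2$, $\|(S_{\sigma,N}[f]-f)g\|_2\leq\|g\|_\infty\|S_{\sigma,N}[f]-f\|_2\to 0$, and combined with the uniform operator-norm bound from $(\mathrm{iv})$, Theorem \ref{minthm} yields SOT convergence on all of $L_2$.

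For the second cluster, $(\mathrm{v})\Leftrightarrow(\mathrm{vi})\Leftrightarrow(\mathrm{vii})$ follows from duality and the uniform boundedness principle. The forward directions from $(\mathrm{v})$ use $\|hg\|_2\leq\|h\|_\infty\|g\|_2$ and $|\langle h,g\rangle|\leq\|h\|_\infty\|g\|_1$: choose $\sigma(T)$ to nearly realize the infimum in $(\mathrm{v})$ and take $C(g)=2C\|g\|_2$ or $C(g)=2C\|g\|_1$. For $(\mathrm{vi})\Rightarrow(\mathrm{v})$, I would apply UBP to the family of bounded operators $\{M_{S_{\sigma(T),N}[T]}/\|T\|_\infty:T\neq 0,\, N\in\N\}$ on $L_2$: the pointwise-in-$g$ bound from $(\mathrm{vi})$ gives pointwise norm-boundedness of this family, UBP then gives a uniform operator-norm bound, and $\|M_h\|_{\mathcal{B}(L_2)}=\|h\|_\infty$ converts this to $(\mathrm{v})$. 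The route $(\mathrm{vii})\Rightarrow(\mathrm{v})$ is the dual version using $L_1^*=L_\infty$.

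Bridging the clusters, I would prove $(\mathrm{v})\Rightarrow(\mathrm{i})$ following the \Rev{} blockwise approach via de la Vall\'ee Poussin means $V_n[f]$. Choose $n_k:=4^k$ and decompose $f=V_{n_0}[f]+\sum_{k\geq 1}T_k$ with $T_k:=V_{n_k}[f]-V_{n_{k-1}}[f]$; each $T_k$ is a trigonometric polynomial whose spectrum lies in the annulus $\{n_{k-1}<|j|\leq 2n_k\}$, the annuli are pairwise disjoint, and $\|T_k\|_\infty\to 0$. Apply $(\mathrm{v})$ to each $T_k$ to get $\sigma_k\in S_\infty$ with $\sup_M\|S_{\sigma_k,M}[T_k]\|_\infty\leq C\|T_k\|_\infty$, and splice the $\sigma_k$'s into a global $\sigma$ by arranging the integers in consecutive blocks, the $k$-th block enumerating the spectrum of $T_k$ in the order induced by $\sigma_k$. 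Then at the breakpoints $N_k$ one has $S_{\sigma,N_k}[f]=V_{n_k}[f]\to f$ uniformly, while for intermediate $N$ the difference $S_{\sigma,N}[f]-V_{n_k}[f]$ is a partial sum of the $\sigma_{k+1}$-rearrangement of $T_{k+1}$, bounded in sup-norm by $C\|T_{k+1}\|_\infty\to 0$, yielding $(\mathrm{i})$.

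The final implication $(\mathrm{iv})\Rightarrow(\mathrm{v})$ will, I expect, be the main obstacle, and my plan is a sliding-hump argument by contradiction. If $(\mathrm{v})$ fails, there exist trigonometric polynomials $T_k$ with $\|T_k\|_\infty\leq 1$ and $\nu(T_k):=\inf_\sigma\sup_N\|S_{\sigma,N}[T_k]\|_\infty\to\infty$ arbitrarily fast. Since spectral translation $T\mapsto T\cdot e_m$ commutes with rearrangement operations up to conjugation by a character (and hence leaves both $\|\cdot\|_\infty$ and $\nu$ invariant), the spectra of the $T_k$ may be taken pairwise disjoint and well-separated. Form $f:=\sum_k c_k T_k$ with $c_k$ chosen so that $\sum|c_k|<\infty$ (ensuring $f\in C(\TT)$) but $c_k\nu(T_k)\to\infty$. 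The goal is to prove $\nu(f)=\infty$, contradicting $(\mathrm{iv})$: for any candidate $\sigma$, the restriction of $S_{\sigma,N}[f]$ to the spectrum of $T_k$ is, up to the factor $c_k$, a partial sum of $T_k$ under the ordering of its spectrum induced by $\sigma$, which extends to a permutation of $\Z$ and hence has worst sup-norm at least $\nu(T_k)$. The technical heart---my anticipated main difficulty---is transferring this single-block lower bound into a lower bound on $\|S_{\sigma,N}[f]\|_\infty$ itself rather than on a mere spectral projection; I expect this will require choosing the $T_k$'s so that their bad partial sums are realized at well-separated points, together with a pigeonhole on the subsequences of $N$ realizing each $T_k$'s bad partial sum.
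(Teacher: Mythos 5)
Your plan to bridge the two clusters does not work as stated, and the difficulty is structural rather than technical. In your proposed proof of $(v)\Rightarrow(i)$, you decompose $f = V_{n_0}[f] + \sum_k T_k$ with $T_k = V_{n_k}[f]-V_{n_{k-1}}[f]$ and splice the rearrangements $\sigma_k$ furnished by $(v)$ into a global $\sigma$, claiming that $S_{\sigma,N_k}[f] = V_{n_k}[f]$ at the breakpoints. This identity is false: the de la Vall\'ee Poussin mean $V_n[f]$ \emph{tapers} the Fourier coefficients of $f$ on the annulus $n<|j|<2n$, so it is not of the form $\sum_{j\in J}\widehat{f}(j)e_j$ for any index set $J$, and therefore is not a rearranged partial sum $S_{\sigma,N}[f]$ for any $\sigma$ and $N$; rearrangement can only select which $\widehat{f}(j)$ to sum, never modify them. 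A further problem is that the spectrum of $T_k$ lies in $\{n_{k-1}<|j|<2n_k\}$ and that of $T_{k+1}$ in $\{n_k<|j|<2n_{k+1}\}$, which always intersect in the nonempty set $\{n_k<|j|<2n_k\}$ no matter how rapidly $n_k\to\infty$, so the spectra are not pairwise disjoint and the splicing of the $\sigma_k$ into a single permutation is not even well defined. Your $(iv)\Rightarrow(v)$ sliding-hump sketch is likewise incomplete in precisely the place you flag: passing from a lower bound on the spectral projection of $S_{\sigma,N}[f]$ onto a single block to a lower bound on $\|S_{\sigma,N}[f]\|_\infty$ is the heart of the matter, and ``well-separated points plus pigeonhole'' is not an argument. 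Both $(v)\Rightarrow(i)$ and $(iv)\Rightarrow(v)$ are substantive results of \Rev; the paper does not reprove them but cites \cite[Section 4]{revesz} for the equivalence $(i)\Leftrightarrow(iv)\Leftrightarrow(v)$.

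The paper instead bridges the clusters through $(vi)\Rightarrow(ii)$ (and $(vii)\Rightarrow(iii)$), and this is exactly where the ingredient you are missing appears. Apply \Rev's Theorem \ref{THMRevesz} to $f$ to obtain a permutation $\sigma$ and a subsequence $\{N_k\}$ with $S_{\sigma,N_k}[f]\to f$ uniformly, and set $T_k := S_{\sigma,N_{k+1}}[f]-S_{\sigma,N_k}[f]$. These $T_k$ are genuine differences of rearranged partial sums, so they carry the untapered Fourier coefficients of $f$ supported on the sets $\sigma(\{N_k+1,\dots,N_{k+1}\})$, which are pairwise disjoint because $\sigma$ is a bijection; both obstructions to your splicing vanish. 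On the positive side, your direct $(v)\Leftrightarrow(vi)\Leftrightarrow(vii)$ via the uniform boundedness principle, using $\|M_h\|_{\mathcal{B}(L_2)}=\|h\|_\infty$ for $(vi)\Rightarrow(v)$ and $L_1^*=L_\infty$ for $(vii)\Rightarrow(v)$, is correct and genuinely different from the paper, which only proves $(v)\Rightarrow(vi),(vii)$ directly and returns to the loop via $(vi)\Rightarrow(ii)$; but that observation cannot substitute for the missing bridges.
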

\begin{proof}
The proof of the equivalence between conditions $(i),(iv)$ and $(v)$ is due to \Rev~ and can be found in \cite[Section 4]{revesz}.

To prove the equivalence of conditions  $(ii),(iii)$ and $(iv)$, recall that each continuous function defines a multiplication operator $M_f$, and notice that for every $f\in C(\TT)$, every $k,m\in\Z$, and every permutation $\sigma$, we have
\[\|M_{S_{\sigma,N}[f]}e_k-M_fe_k\|_2\to0,\quad \text{and} \quad \bracket{M_{S_{\sigma,N}[f]}e_k-M_fe_k,e_m}\to0,\] as $N\to\infty$.
In other words, for any rearrangement, and any $f\in L_\infty$, we have convergence of the multiplication operators associated with the partial sums in both the SOT and WOT on a set of dense linear span in $L_2$. Therefore, from Theorem \ref{minthm}, for a given  function $f\in L_\infty$, the rearranged partial sums $S_{\sigma,N}[f]$ converge to $f$ in the strong or  (in this case) equivalently weak operator topologies if and only if $\|S_{\sigma,N}[f]\|_{\infty}$ are uniformly bounded in $N$.

Obviously, condition $(v)$ implies both $(vi)$ and $(vii).$

To prove condition $(vi)$ implies $(ii)$, let $f\in C(\TT)$; from Theorem \ref{THMRevesz}, there exists a permutation $\sigma:\Z\to\Z$ and a subsequence $\{N_k\}\subset\N$ such that $S_{\sigma,N_k}[f]$ converges to $f$ uniformly as $k\to\infty.$ Let $g\in L_2(\TT)$, and put $T_{k}=S_{\sigma,N_{k+1}}[f]-S_{\sigma,{N_k}}[f]$.  Then by assumption, we have that for some rearrangement $\sigma_k$,
\[\underset{N}\sup\;\|S_{\sigma_k,N}[T_{k}]g\|_{2}\leq C(g)\|S_{\sigma,N_{k+1}}[f]-S_{\sigma,{N_k}}[f]\|_\infty.\]
Note that we may assume that $\sigma_{k}(\sigma(\{N_{k}+1,\cdots,N_{k+1}\}))=\sigma(\{N_{k}+1,\cdots,N_{k+1}\}).$ Indeed, the $j$-th Fourier coefficient of $T_{k}$ for $j\notin \sigma(\{N_{k}+1,\cdots,N_{k+1}\})$ is zero. So condition $(\ref{I:rearrange just trig polys})$ is still true if we replace $\sigma_k$ by a permutation $\widetilde{\sigma_{k}}$ with the property that $\widetilde{\sigma_{k}}(\sigma(N_{j}))=\sigma_{k}(l_{j})$ where $(l_{j})_{j=N_{k}+1}^{N_{k+1}}$ are the unique increasing sequence of integers satisfying
\[|\{\sigma_{k}(l_{N_{k}+1}),\cdots,\sigma_{k}(l_{N_{k}+j})\}\cap \sigma_{k}(\{N_{k}+1,\cdots,N_{k+1}\})|=j.\]
Hence we may, and will, assume that
\[\sigma_{k}(\sigma(\{N_{k}+1,\cdots,N_{k+1}\}))=\sigma_{k}(\sigma(\{N_{k}+1,\cdots,N_{k+1}\})).\]
Denote by $\bar \sigma$ the rearrangement of $\Z$  defined as follows: if $j\in (N_{k},N_{k+1}]$, then $\bar\sigma(j) = \sigma_{k}\circ\sigma(j)$, and $\bar\sigma(j)$ is the identity outside of these intervals.  Evidently, $\bar\sigma$ is a rearrangement of $\Z$.
Moreover, for every $j,j^\prime\in\N$, if $  N_k<j\leq N_{k+1} $ and $N_{k^\prime}<j^\prime\leq N_{k^\prime+1}$, then
$\|S_{\bar\sigma,j}[f]g-S_{\bar\sigma,j^\prime}[f]g\|_2$ is majorized by
\[ \|S_{\bar\sigma,N_{k}}[f]g-S_{\bar\sigma,j}[f]g\|_2+\|S_{\bar\sigma,N_{k^\prime}}[f]g-S_{\bar\sigma,j^\prime}[f]g\|_2+\|S_{\bar\sigma,N_{k}}[f]g-S_{\bar\sigma,N_{k^\prime}}[f]g\|_2,\]
which is at most
\begin{multline} C(g)\|S_{\sigma,N_{k+1}}[f]-S_{\sigma,{N_k}}[f]\|_\infty+C(g)\|S_{\sigma,N_{k^\prime+1}}[f]-S_{\sigma,{N_k^\prime}}[f]\|_\infty\nonumber\\ +\|S_{\sigma,N_{k}}[f]-S_{\sigma,N_{k^\prime}}[f]\|_\infty \|g\|_2.  \end{multline}
Thus, $\{S_{\bar\sigma,j}[f]g\}$ is Cauchy in $L_2$, and since $\{S_{\bar\sigma,N_k}[f]g\}$ converges to $fg$, it follows that  $ \|S_{\bar\sigma,j}[f]g-fg\|_2 \to 0$, which gives $(ii)$ as $g$ was arbitrary.
The fact that $(vii)$ implies $(iii)$ can be proved similarly after we make the observation  that, for $f,g\in L_2$, $fg\in L_1$.
\end{proof}

One advantage of the conditions given above is that conditions \textit{(ii)} and \textit{(iii)} are much weaker than condition \textit{(i)} for general operators.  That is, the strong and weak operator topologies are weaker than the operator norm topology, which corresponds to uniform convergence of the partial sums.  The weak operator topology is the weakest of the three, so it is hoped that convergence of the rearranged series will be easier to prove in this topology.

With the aid of this theorem, we will develop some new strategies that may lead to a proof of or counterexample to Ulyanov's problem.  This approach also allows relaxations of the problem to more tangible problems which will be discussed in the sequel.

It is unclear at the moment if Theorem \ref{THMEquivalences} remains true if $C(\TT)$ is replaced with $L_\infty$ in conditions $(ii)$ and $(iii)$.

%\section{Strategies for Proving or Disproving Ulyanov's problem}
\section{Relaxations of the Ulyanov's problem}\label{SECRelaxations}

We refer the reader to the work of \Rev~ and Konyagin for a discussion of strategies for proving Ulyanov's problem which arise from classical Fourier series methods.  Here, we consider some alternative approaches based on the multiplication operator equivalence to the problem discussed above.

Recall that Ulyanov's problem is equivalent to the existence of a permutation such that $M_{S_{\sigma,N}[f]}\to M_f$ in the SOT (equivalently the WOT) on $\mathcal{B}(L_2)$.  To make the problem more tangible, these observations suggest that we consider smaller subspaces of $L_2$ such that the rearranged multiplication operators converge in the SOT restricted to that subspace.

To discuss convergence on a subspace of $L_2$, we first consider the following definition, which is essentially that condition $(vi)$ in Theorem \ref{THMEquivalences} holds for a subset $S\subset L_2$.

\begin{definition}\label{DEFrearrangementprop}
A subset $\mathcal{S}\subset L_2$ is said to have the {\em bounded rearrangement property} if there exist maps $C:\mathcal{S}\to\R_+$, and $\sigma:\{\text{trigonometric polynomials}\}\to S_\infty$ such that for every $g\in \mathcal{S}$ and for every trigonometric polynomial $T$,
\begin{equation}\label{weakestim}
\underset{N}\sup\;\|S_{\sigma(T),N}[T]g\|_{2}\leq C(g)\|T\|_\infty.
\end{equation}
\end{definition}

Note that if $\mathcal{S}$ has the bounded rearrangement property, then so does its linear span; however, something stronger holds.

\begin{comment}
Note that, if the condition $(vi)$ in Theorem \ref{THMEquivalences} holds on a subset $\mathcal{S}$ of $L_2$, i.e.
 there exists a map $C:\mathcal{S}\to \R_+$ and a map $\sigma: \{\text{trigonometric polynomials}\}\to S_\infty$  such that, for every $g\in S$ and for every trigonometric polynomial $T$,
\begin{equation}\label{weakestim}
\underset{N}\sup\;\|S_{\sigma(T),N}[T]g\|_{2}\leq C(g)\|T\|_\infty,
\end{equation}
then from Theorem \ref{THMRevesz}, the rearranged partial sums converge in the SOT on the linear span of $\mathcal{S}$.

We say that a set $\mathcal{S}\subset L_2$ has the {\em bounded rearrangement property} if it satisfies the condition listed above.

Let us now note that something slightly stronger holds than simply having convergence in the SOT on the span of $\mathcal{S}$.
\end{comment}

\begin{proposition}\label{extensionsofset}
Suppose $\mathcal{S}$ has the bounded rearrangement property.
\begin{enumerate}[(i)]
 \item Then the linear span of $\mathcal{S}$ has the bounded rearrangement property.
\item Let $\mathcal{U}$ be a subset of  $L_\infty$ such that for every $g\in \mathcal{U}$ there exist $h,h_1\in \mathcal{S}$ with $|g(t)-h(t)|<h_1(t)$ for almost every $t\in\TT$.  Then $\mathcal{U}$ has the bounded rearrangement property.
\item  The following set also has the bounded rearrangement property: \[\left\{g\left(\frac{t+j}{k}\right)\;:\;g\in S, k\in\N,0\leq j\leq k\right\}.\]
\end{enumerate}
\end{proposition}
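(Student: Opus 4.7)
My plan is to handle the three parts of Proposition~\ref{extensionsofset} in turn. For (i) and (ii), I will keep the same rearrangement map $\sigma$ already provided by the bounded rearrangement property of $\mathcal{S}$ and merely enlarge the constant $C$; both parts reduce to the triangle inequality, with (ii) additionally exploiting a pointwise comparison. For (iii), I must produce a new rearrangement map $\sigma^{new}$, and this is where the argument becomes delicate.

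For (i), given $g=\sum_{i=1}^n\alpha_i g_i$ with $g_i\in\mathcal{S}$, the triangle inequality yields
\[
\|S_{\sigma(T),N}[T]g\|_2 \leq \sum_i|\alpha_i|\,\|S_{\sigma(T),N}[T]g_i\|_2 \leq \Bigl(\sum_i|\alpha_i|\,C(g_i)\Bigr)\|T\|_\infty,
\]
so $C'(g):=\sum_i|\alpha_i|\,C(g_i)$ (for a fixed representation of $g$) works. For (ii), write $g=h+r$ with $h,h_1\in\mathcal{S}$ and $|r|<h_1$ a.e.\ to obtain
\[
\|S_{\sigma(T),N}[T]g\|_2 \leq \|S_{\sigma(T),N}[T]h\|_2 + \|S_{\sigma(T),N}[T]r\|_2 \leq C(h)\|T\|_\infty + \|S_{\sigma(T),N}[T]h_1\|_2,
\]
where the last step uses the pointwise bound $|S_{\sigma(T),N}[T]\, r|^2\leq |S_{\sigma(T),N}[T]|^2 h_1^2$. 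The final term is then at most $C(h_1)\|T\|_\infty$, so $C^{new}(g):=C(h)+C(h_1)$ works.

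For (iii), the starting observation is the change of variables $u=(t+j)/k$:
\[
\|S_{\sigma^{new}(T),N}[T]\tilde g\|_2^2 = k\int_{j/k}^{(j+1)/k}|S_{\sigma^{new}(T),N}[T](ku-j)|^2\,|g(u)|^2\,du \leq k\|\psi_N\, g\|_2^2,
\]
where $\psi_N(u):=S_{\sigma^{new}(T),N}[T](ku-j)$ is a trigonometric polynomial in $u$ with Fourier support in $k\Z$. The dilated polynomial $T^{\#}_{k,j}(u):=T(ku-j)$ satisfies $\|T^{\#}_{k,j}\|_\infty=\|T\|_\infty$ and also has Fourier support in $k\Z$, and a direct computation shows that $\psi_N$ is a partial rearranged sum of $T^{\#}_{k,j}$ under the enumeration $n\mapsto k\sigma^{new}(T)(n)$ of its nonzero frequencies. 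The plan is to choose $\sigma^{new}(T)$ so that this induced enumeration of $k\Z$ matches an initial segment of $\sigma(T^{\#}_{k,j})$'s ordering of $k\Z$ for every $(k,j)$; then $\psi_N=S_{\sigma(T^{\#}_{k,j}),M}[T^{\#}_{k,j}]$ for some $M=M(N,k,j)$, and the bounded rearrangement property of $\mathcal{S}$ applied to $g$ and $T^{\#}_{k,j}$ yields $\|\psi_N\, g\|_2 \leq C(g)\|T\|_\infty$. Setting $C^{new}(\tilde g):=\sqrt{k}\,C(g)$ then completes the argument.

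The main obstacle is to construct a single map $\sigma^{new}$ on trigonometric polynomials that is simultaneously compatible with the rearrangements $\sigma(T^{\#}_{k,j})$ for all $(k,j)$. The cleanest route is to take $\sigma^{new}(T)$ as a canonical enumeration of $\Z$ that intertwines with dilation---for instance, ordering $\Z$ by decreasing magnitude of $|\widehat{T}(n)|$---since the map $n\mapsto kn$ then carries this ordering onto the corresponding magnitude ordering of $k\Z$ for $T^{\#}_{k,j}$ (noting that the $kn$-th Fourier coefficient of $T^{\#}_{k,j}$ has magnitude $|\widehat{T}(n)|$). The remaining subtlety is checking that the bounded rearrangement property of $\mathcal{S}$ may be realized with a $\sigma$ of this canonical form; this requires the most careful bookkeeping, and will likely be the hardest step to push through rigorously.
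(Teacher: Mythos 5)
Parts (i) and (ii) are correct and essentially identical to the paper's argument: keep the same map $\sigma$ and enlarge the constant, using linearity for (i) and the triangle inequality together with the pointwise majorization $|S_{\sigma(T),N}[T](g-h)|\leq |S_{\sigma(T),N}[T]|\,h_1$ for (ii).

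For (iii), your setup coincides with the paper's more than you may have realized: since $j$ is an integer and $T$ is $1$-periodic, $T^{\#}_{k,j}(u)=T(ku-j)=T(ku)$ is exactly the paper's $\tilde T$. Your change of variables and the identification of $\psi_N$ as a polynomial with frequencies in $k\Z$ reproduce the paper's chain of inequalities, which ends with $C\bigl(g\bigl(\tfrac{\cdot+j}{k}\bigr)\bigr)=\sqrt{k}\,C(g)$. The genuine gap is in your proposed resolution of the compatibility problem. The bounded rearrangement property of $\mathcal{S}$ hands you a \emph{fixed but arbitrary} map $\sigma\colon\{\text{trigonometric polynomials}\}\to S_\infty$; you have no freedom to choose it, and in particular you cannot assume it is the ``greedy'' magnitude-decreasing enumeration. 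Passing to the greedy ordering would require establishing the estimate $\sup_N\|S_{\tau(T),N}[T]g\|_2\leq C(g)\|T\|_\infty$ for a \emph{different} permutation map $\tau$ than the one supplied, which is precisely what the hypothesis does not give you. So the step you flag as ``the hardest to push through rigorously'' is not merely hard; the greedy-ordering route is a dead end. You are, however, right to be uneasy at this exact point: the paper's own choice ``$\sigma(T)=\sigma(\tilde T)$'' depends on $k$, i.e.\ on the element $\tilde g$ of the new set, whereas the definition of the bounded rearrangement property requires the permutation map to depend only on $T$; the paper does not spell out how to produce a single permutation valid across all $k$ simultaneously, nor how to reindex the descaled sums (whose cardinalities need not be of the form $2N+1$) by a genuine symmetric partial sum. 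So your concern is well-founded, but the fix you sketch does not close the gap.
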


\begin{proof} $(i)$:
%If $g=a_1h_1+\cdots +a_kh_k$, $ h_i\in\mathcal{S} $, then simply take $C(g)=|a_1|C(h_1)+\cdots+|a_k|C(h_k)$.
If $g=\sum_{j}a_jh_j$, $ h_j\in\mathcal{S} $, then simply take $C(g)=\sum_j|a_j|C(h_j)$.
	
	$(ii)$: Suppose $g\in \mathcal{U}$, $ h, h_1\in \mathcal{S}$ and $|g(t)-h(t)|<h_1(t)$ for almost every $t\in\TT$. Then for any trigonometric polynomial and the corresponding permutation  $\sigma$ of $ \Z $,
	\begin{align*}
	\| S_{\sigma,N}[T]g\|_2 &\leq \| S_{\sigma,N}[T](g-h)\|_2+\| S_{\sigma,N}[T]h\|_2\\ &\leq
	 \| S_{\sigma,N}[T]h_1\|_2+\| S_{\sigma,N}[T]h\|_2\leq (C(h_1)+C(h))\|T\|_\infty\\ &=:C(g)\|T\|_\infty.\end{align*}
	%where $C_g=C_{h_1}+C_h$.
	
	$(iii)$: For a given trigonometric polynomial $T$, let $\tilde{T}=T(kt)$. Then
\begin{align*}
C(g)^2\|T\|_\infty^2=C(g)^2\|\tilde T\|_\infty^2\geq \|S_{\sigma(\tilde{T}),N}[\tilde{T}]g\|^2_2& =\int\limits_{\TT}|S_{\sigma(\tilde{T}),N}[\tilde{T}](t)|^2|g(t)|^2dt\\
& = \frac{1}{k}\int\limits_{k\TT}\left|S_{\sigma(\tilde{T}),N}[\tilde{T}]\left(\frac{u}{k}\right)\right|^2\left|g\left(\frac{u}{k}\right)\right|^2du,\\
\end{align*}
which is at least
\[
\frac{1}{k}\int\limits_{\TT}\left|S_{\sigma(\tilde{T}),N}[\tilde{T}]\left(\frac{u+j}{k}\right)\right|^2\left|g\left(\frac{u+j}{k}\right)\right|^2du=\frac{1}{k}\left\|S_{\sigma(\tilde{T}),N}[T]g\left(\frac{\cdot+j}{k}\right)\right\|^2_2,\]
where we used the fact that $ \tilde T $ is $ \frac{1}{k} $ periodic.  Consequently, we may take $\sigma(T)=\sigma(\tilde{T})$ and $C\left(g\left(\frac{\cdot+j}{k}\right)\right)=\sqrt{k}C(g)$.
\end{proof}

\begin{corollary} Suppose a subset $\mathcal{S}$ of $L_2$ has the bounded rearrangement property, then the following sets also have this property:
\begin{enumerate}[(i)]
\item The set
$$\{h\in L_2(\TT):\;|h(t)|\leq |g(t)|\text{ for all } t\in \TT \text{ and some }g\in \mathcal{S}\},$$
\item The closed linear span of $\mathcal{S}$ in the $L_\infty$ norm,
\item The set $$\{gh:\;g \text{ in closed linear span of }\mathcal{S}\text{ in } L_\infty \text{ norm, }h\in L_\infty(\TT)\}.$$
\end{enumerate}
\end{corollary}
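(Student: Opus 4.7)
The plan is to deduce each of the three assertions directly from Proposition~\ref{extensionsofset} together with a single elementary estimate. The preliminary observation to have in hand is that, for every trigonometric polynomial $T$, every $N$, and every $\sigma\in S_\infty$, Parseval's theorem combined with $\|\cdot\|_2\le\|\cdot\|_\infty$ on $\TT$ gives $\|S_{\sigma,N}[T]\|_2\le\|T\|_\infty$; consequently, for any $u\in L_\infty(\TT)$ we have the pointwise-multiplier bound $\|S_{\sigma,N}[T]\,u\|_2\le\|u\|_\infty\,\|T\|_\infty$.

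For part $(i)$ I would argue by pointwise domination. If $|h(t)|\le|g(t)|$ almost everywhere with $g\in\mathcal{S}$, then $|S_{\sigma(T),N}[T](t)\,h(t)|\le|S_{\sigma(T),N}[T](t)\,g(t)|$ for a.e. $t$, and squaring and integrating gives $\|S_{\sigma(T),N}[T]\,h\|_2\le\|S_{\sigma(T),N}[T]\,g\|_2\le C(g)\|T\|_\infty$. Hence the very same permutation map $\sigma:\{\text{trig. polys.}\}\to S_\infty$ witnesses the bounded rearrangement property on the dominated set, with $C(h):=C(g)$ for any choice of dominating witness $g$.

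For part $(ii)$, by Proposition~\ref{extensionsofset}$(i)$, the linear span of $\mathcal{S}$ already has the bounded rearrangement property, so I may replace $\mathcal{S}$ by its linear span. Given $g$ in the $L_\infty$-closure of the span, choose $g_0$ in the span with $\|g-g_0\|_\infty\le 1$. Then by the triangle inequality and the preliminary estimate applied to $u=g-g_0\in L_\infty(\TT)$, $\|S_{\sigma(T),N}[T]\,g\|_2\le\|S_{\sigma(T),N}[T]\,g_0\|_2+\|S_{\sigma(T),N}[T]\,(g-g_0)\|_2\le\bigl(C(g_0)+1\bigr)\|T\|_\infty$, so one may set $C(g):=C(g_0)+1$; the same $\sigma$ as in the hypothesis still works.

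For part $(iii)$, if $g$ lies in the $L_\infty$-closed linear span of $\mathcal{S}$ and $h\in L_\infty(\TT)$, then $|gh|\le\|h\|_\infty|g|$ pointwise, and $\|h\|_\infty g$ still lies in that closed span (a linear subspace). Applying part $(i)$ with $\mathcal{S}$ replaced by the $L_\infty$-closed span (which has the bounded rearrangement property by part $(ii)$) and with $\|h\|_\infty g$ serving as the dominating function yields the conclusion, with $C(gh):=\|h\|_\infty C(g)$. I do not foresee any serious obstacle: the entire corollary amounts to bookkeeping on top of Proposition~\ref{extensionsofset} and the trivial Parseval-type bound noted at the start, and the same permutation map transfers through each of the three extensions without modification.
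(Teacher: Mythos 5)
Your proof is correct and fills in the details that the paper leaves unstated (the paper only remarks that part $(iii)$ follows from $(i)$ and $(ii)$ and otherwise treats the Corollary as immediate from Proposition~\ref{extensionsofset}). Part $(i)$ by direct pointwise domination and part $(iii)$ by combining $(i)$ and $(ii)$ match what the authors have in mind.

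The one place you genuinely improve on the ``obvious'' route is part $(ii)$. Proposition~\ref{extensionsofset}$(ii)$ asks for a dominating function $h_1$ lying in $\mathcal{S}$ itself; to apply it to an $L_\infty$ limit $g$ of span elements one would want to dominate $|g-g_0|$ by a constant, and a constant need not lie in the span of $\mathcal{S}$. Your preliminary Parseval bound $\|S_{\sigma,N}[T]\,u\|_2\le\|u\|_\infty\,\|T\|_\infty$ (which is correct: the rearranged partial sum keeps only a subset of the Fourier coefficients of $T$, so $\|S_{\sigma,N}[T]\|_2\le\|T\|_2\le\|T\|_\infty$, and then the multiplier bound is immediate) sidesteps this cleanly, since the error $u=g-g_0$ is handled by its $L_\infty$ norm alone without needing it to be dominated by anything in the span. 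This makes your part $(ii)$ argument both more elementary and more robust than a literal appeal to Proposition~\ref{extensionsofset}$(ii)$, at essentially no extra cost. The bookkeeping of $\sigma$ and $C$ (same permutation map throughout, constants transferred via a choice of witness) is handled correctly in all three parts.
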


Note that part (iii) of the above Corollary follows from parts (i) and (ii).

Related to Proposition \ref{extensionsofset}, we suggest the following problem.% as a relaxation of the Ulyanov's problem.

\begin{problem} \label{prob:brg}Does every countable $\mathcal{S}\subset L_2$ have the bounded rearrangement property?%Is the property \eqref{weakestim} satisfied for all countable (finite or infinite) sets $\mathcal{S}\subset L_2$?
\end{problem}

Unfortunately, having a positive answer to the above problem will not prove Ulyanov's problem since $L_2$ cannot be constructed from a countable set using its extensions from the Proposition \ref{extensionsofset}. However, if Ulyanov's problem has a positive answer, Problem \ref{prob:brg} may be regarded as its relaxation, which may be easier to prove. It may also happen that Problem \ref{prob:brg} has a positive answer while Ulyanov's problem does not. %On the other hand, if one can show that there exists a countable set $\mathcal{S}\subset L_2$ which fails to have the bounded rearrangement property, then this will obviously disprove Ulyanov's problem.}

Notice that the bounded rearrangement property on a subspace $\mathcal{S}$ implies that for every $f\in C(\TT)$, there exists a permutation $\sigma:\Z\to\Z$ such that $M_{S_{\sigma,N}[f]}\to M_f$ in the SOT on the subspace $\mathcal{S}$.
\begin{problem}
Let $\mathcal{S}$ be a subspace of $L_2(\TT). $ Suppose every function  $f\in C(\TT)$ has a rearrangement of its  Fourier series that converges to $f$ in the SOT on $\mathcal{S}$. Does $\mathcal{S}$ satisfy the bounded rearrangement property?
\end{problem}

A similar result to Proposition \ref{extensionsofset} can be proved for the weak operator topology for the closure in the $L_2(\TT)$ norm.

\begin{proposition}\label{extensionsofsetweak}
Suppose $\mathcal{S}\subset L_1$, and there exist maps $C:\mathcal{S}\to\R_+$ and $\sigma:\{\textnormal{trigonometric polynomials}\}\to S_\infty$ such that %for every $g\in \mathcal{S}\subseteq  L_1$, there exists a $C_g>0$ and for every trigonometric polynomial $T$, there exists a permutation $\sigma:\Z\to\Z$  such that
\begin{equation}\label{weakestimtrigpol}
\underset{N}\sup\;|\bracket{S_{\sigma(T),N}[T],g}|\leq C(g)\|T\|_\infty.
\end{equation}
\begin{enumerate}[(i)]
 \item Then \eqref{weakestimtrigpol} also holds for the linear span of $  \mathcal{S} $ .
\item Let $\mathcal{U}$ be a subset of $L_1$ such that for every $g\in \mathcal{U}$ there exists $h\in \mathcal{S}$ with $\|g(t)-h(t)\|_2<\infty$. Then  \eqref{weakestimtrigpol} also holds for $\mathcal{U}$.
\end{enumerate}
\end{proposition}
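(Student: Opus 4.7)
The plan is to mirror, in the WOT setting, the linearity-and-domination arguments used for the SOT analogue in Proposition \ref{extensionsofset}, but with the pointwise domination of that proposition replaced by a single application of Cauchy--Schwarz. In both parts I keep the map $\sigma$ on trigonometric polynomials exactly as given by the hypothesis on $\mathcal{S}$; only the constant map $C$ needs to be extended.

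For (i), I invoke linearity directly. Given $g=\sum_{j=1}^{n}a_{j}h_{j}$ with $h_{j}\in\mathcal{S}$, the triangle inequality yields
\[|\la S_{\sigma(T),N}[T],g\ra|\le \sum_{j=1}^{n}|a_{j}|\,|\la S_{\sigma(T),N}[T],h_{j}\ra|\le \Big(\sum_{j=1}^{n}|a_{j}|C(h_{j})\Big)\|T\|_{\infty},\]
so I define $C(g):=\sum_{j=1}^{n}|a_{j}|C(h_{j})$.

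For (ii), fix $g\in\mathcal{U}$ and pick $h\in\mathcal{S}$ with $g-h\in L_{2}(\TT)$. Split
\[\la S_{\sigma(T),N}[T],g\ra=\la S_{\sigma(T),N}[T],h\ra+\la S_{\sigma(T),N}[T],g-h\ra.\]
The first term is at most $C(h)\|T\|_{\infty}$ by the hypothesis on $\mathcal{S}$. For the second, the key observation is that $S_{\sigma(T),N}[T]$ is itself a (re-indexed) partial sum of the Fourier series of $T$, so Parseval's identity together with the fact that $\TT$ has unit measure gives
\[\|S_{\sigma(T),N}[T]\|_{2}^{2}=\sum_{|n|\le N}|\widehat{T}(\sigma(T)(n))|^{2}\le \|T\|_{2}^{2}\le \|T\|_{\infty}^{2}.\]
Cauchy--Schwarz then bounds the second term by $\|g-h\|_{2}\|T\|_{\infty}$, so $C(g):=C(h)+\|g-h\|_{2}$ works.

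There is no serious obstacle here: the only conceptual point is to notice that the WOT version does not require bounding the $L_{2}$ norm of $S_{\sigma(T),N}[T](g-h)$, only the scalar pairing $\la S_{\sigma(T),N}[T],g-h\ra$. This is what lets the much weaker hypothesis $g-h\in L_{2}(\TT)$ replace the pointwise domination by an element of $\mathcal{S}$ that was needed in Proposition \ref{extensionsofset}, and also explains why the perturbation class in (ii) is simply $L_{2}$ rather than a subset of $\mathcal{S}$.
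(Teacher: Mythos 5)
Your proof is correct and fills in exactly the details the paper leaves to the reader when it says the argument "follows by similar reasoning to that of Proposition \ref{extensionsofset}": part (i) is the same linearity argument, and in part (ii) the pointwise domination of the SOT case is replaced, as you observe, by Cauchy--Schwarz together with the Bessel/Parseval bound $\|S_{\sigma(T),N}[T]\|_{2}\le\|T\|_{2}\le\|T\|_{\infty}$, which is precisely what the weaker hypothesis $g-h\in L_{2}$ is designed to exploit.
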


The proof of this proposition follows by similar reasoning to that of Proposition \ref{extensionsofset} and so is omitted.

\section{Framework for further investigation}\label{SECFramework}
In this section we investigate the relationships between the set of permutations, the class of functions whose rearranged partial Fourier series we are considering and the subspace  on which we want to have convergence in the strong operator topology.  In particular we notice algebraic structures indirectly appearing in our analysis.

We consider triples $(\Sigma,\mathcal{F},\mathcal{V})$ where $\Sigma$ is a set of permutations, $\mathcal{F}\subset L_\infty(\TT)$  and  $\mathcal{V}$ is a {subset} of $ L_2(\TT)$, for which
\begin{equation}\label{funcpermsub}\|S_{\sigma,N}[f]g-fg\|_2\to0,\quad  \text{for every } g\in \mathcal{V}, \text{ every }\sigma\in\Sigma,  \text{ and every } f\in\mathcal{F}.\end{equation}

Given a set of permutations $\Sigma\subset S_\infty$, let $\mathcal{V}_\Sigma$ be the largest subset of $L_2(\TT)$ such that \eqref{funcpermsub} holds for the triple $(\Sigma,L_\infty(\TT),\mathcal{V}_\Sigma)$  and let $\mathcal{W}_\Sigma$ be the largest subset of $C(\TT)$ such that \eqref{funcpermsub} holds for the triple $(\Sigma,\mathcal{W}_\Sigma, L_2(\TT))$.

A couple of notes are in order.  First, it is easily seen that both $\mathcal{V}_\Sigma$ and $\mathcal{W}_\Sigma$ are subspaces of $L_2$ and $C(\TT)$, respectively, regardless of the class of permutations $\Sigma$. For  $\mathcal{V}_\Sigma$  it is obvious. The fact that $\W_{\Sigma}$ is a subspace of $C(\TT)$ follows directly from linearity of $S_{\sigma,N}$, the triangle inequality, and the inequality $(a+b)^2\leq 2a^2+2b^2$. Second, notice that if $\Sigma_1\subset\Sigma_2$, then $\V_{\Sigma_1}\subset\V_{\Sigma_2}$.  Therefore, one item of interest is to consider what the largest such space, $\V_{S_\infty}$, is.  This is the subject of the investigation in Section \ref{SECV}, where we determine that $\V_{S_\infty}=L_\infty$.

\begin{remark}\label{REMW}
Regard that Ulyanov's problem is equivalent to showing that \[\underset{\Sigma\subset S_\infty}\bigcup \W_\Sigma = C(\TT).\]  Of course in all of these matters, one should keep in mind that $\Sigma$ should be viewed as a set of equivalence classes in $S_\infty$, where two permutations are equivalent if they differ by at most finitely many coordinates.  Also notice that if $\Sigma_1\subset\Sigma_2$, then $\W_{\Sigma_2}\subset\W_{\Sigma_1}$.  Hence, it would suffice to show that
\[\underset{\sigma\in S_\infty}\bigcup \W_{\{\sigma\}} = C(\TT).\]
\end{remark}

Note that here we define $\mathcal{W}_\Sigma$ to be a subset of $C(\TT)$ because this is the setting in which it may shed light onto Ulyanov's problem.  However, in the sequel, we demonstrate that the characterization of $\mathcal{W}_{S_\infty}$ remains the same when $C(\TT)$ is replaced by $L_\infty$.

Unfortunately, even determining $\W_{\{\sigma\}}$ for a fixed $\sigma\in\Sigma$ is elusive.  Indeed, for $\sigma$ being the identity we do not know of a characterization. %(in this case, convergence of the multiplication operators in the SOT is equivalent to the uniform convergence of $S_{N}[f]$ to $f$). I think I asked this question before but is this correct? Armenak
In conclusion, we pose the following general problem for further investigation.

\begin{problem}
Characterize the spaces $\mathcal{V}_\Sigma$ and $\mathcal{W}_\Sigma$ for general sets of permutations.
\end{problem}

One final note: as a similar line of investigation to finding $\V_{S_\infty}$, we will demonstrate that $\W_{S_\infty}=A(\TT)$, the Wiener algebra of functions with absolutely convergent Fourier series.  In this case, $\W_\Sigma$ is a subalgebra of $C(\TT)$; however, we do not have a direct proof of this fact because we provide the characterization by other means and obtain as a corollary that $\W_{S_\infty}$ is an algebra.  Consequently, this naturally leads to the following question.

%if we had $f\cdot g\in \mathcal{W}_\Sigma$ for every $f,g\in \mathcal{W}_\Sigma$ than it becomes a subalgebra of $C(\TT)$, which is the case when $\Sigma=S_\infty$ as shown in Section \ref{SECW}. It naturally leads to the following question.
\begin{problem}
Under what conditions on the set $\Sigma$ is $\W_\Sigma$  a subalgebra of $C(\TT)$?
\end{problem}

As a particular case of this problem, take $\Sigma=\{\text{id}\}$. It turns out that $\W_{\{\text{id}\}}$ is not closed under multiplication, and so is not an algebra.  To see this, note that $\W_{\{\text{id}\}}$ is the set of all $f\in C(\TT)$ such that $\|S_N[f]\|_\infty$ is uniformly bounded in $N$.  Then the proof of the main theorem in \cite{olevskialg} implies that the smallest algebra containing the functions with uniformly bounded partial Fourier sums coincides with $C(\TT)$, hence $\W_{\{id\}}$ is not an algebra.

\section{Characterization of $\mathcal{V}_{S_\infty}$}\label{SECV}

In this section we provide a full characterization of the subspace $\V_{S_\infty}$, i.e. the largest subspace on which the Fourier series converge unconditionally in SOT for every continuous function.

As a preliminary note, consider the following.

\begin{proposition}\label{PROPgLinfty}
Let $f\in L_\infty$.  Suppose there exists a permutation $\sigma$ and a $2\leq q\leq\infty$ such that $\|S_{\sigma,N}[f]-f\|_q\to0$.  Then for every $g\in L_p$ and for $\frac1p=\frac12 (1-\frac2q)$, \[\|S_{\sigma,N}[f]g-fg\|_2\to0.\]
\end{proposition}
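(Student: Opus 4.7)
The plan is to reduce the claim to a direct application of Hölder's inequality. Write
\[
\|S_{\sigma,N}[f]g - fg\|_2 = \|(S_{\sigma,N}[f] - f)\,g\|_2,
\]
and observe that, by the hypothesis, the first factor $h_N := S_{\sigma,N}[f] - f$ tends to zero in $L_q$. It therefore suffices to show that there is a constant $C = C(g)$, independent of $N$, with
\[
\|h_N g\|_2 \leq C\|h_N\|_q,
\]
since then letting $N \to \infty$ finishes the proof.

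To obtain such a bound, I would square and apply Hölder's inequality to $\int |h_N|^2 |g|^2$ with conjugate exponents $r = q/2$ and $r' = q/(q-2)$ (interpreting the endpoint $q=\infty$ via $r=\infty$, $r'=1$, which gives $p=2$). This yields
\[
\|h_N g\|_2^2 \leq \left(\int |h_N|^{q}\right)^{2/q}\left(\int |g|^{2q/(q-2)}\right)^{(q-2)/q} = \|h_N\|_q^{2}\,\|g\|_{p}^{2},
\]
where $p := 2r' = 2q/(q-2)$. A quick check confirms that this $p$ is exactly the one in the statement, since
\[
\frac{1}{p} = \frac{q-2}{2q} = \frac{1}{2}\left(1 - \frac{2}{q}\right).
\]

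Taking square roots gives $\|h_N g\|_2 \leq \|h_N\|_q \|g\|_p$, and since $g \in L_p$ by hypothesis and $\|h_N\|_q \to 0$, we conclude $\|S_{\sigma,N}[f]g - fg\|_2 \to 0$, as desired. There is essentially no obstacle here; the content of the proposition is the identification of the correct exponent $p$ dictated by Hölder duality between $L_q$ and $L_2$, and the verification that the given formula for $1/p$ matches $(q-2)/(2q)$.
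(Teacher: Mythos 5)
Your proof is correct and is essentially identical to the paper's: both apply H\"older's inequality to $\int |S_{\sigma,N}[f]-f|^2|g|^2$ with conjugate exponents $q/2$ and $q/(q-2)$, yielding $\|(S_{\sigma,N}[f]-f)g\|_2 \le \|S_{\sigma,N}[f]-f\|_q\|g\|_p$ with $p = 2q/(q-2)$, which matches the exponent in the statement. No differences worth noting.
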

\begin{proof}
Apply H\"{o}lder's inequality with $\frac q2$ and its conjugate $\frac{1}{s}=1-\frac2q$ to see that
\[\|S_{\sigma,N}[f]g-fg\|_2^2\leq\|S_{\sigma,N}[f]-f\|_q^2\|g\|_{2s}^2.\]
Setting $p=2s$ provides the desired conclusion.
\end{proof}

\begin{proposition}\label{CORgLinfty}
For any $f\in L_\infty$, any permutation $\sigma$ of $\Z$, and for every $g\in L_\infty$,
\[\|S_{\sigma,N}[f]g-fg\|_2\to0.\]
\end{proposition}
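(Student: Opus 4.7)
The plan is to deduce this as a direct corollary of Proposition \ref{PROPgLinfty} with the choice $q=2$, which forces $p=\infty$ (since $\tfrac{1}{p} = \tfrac{1}{2}(1-\tfrac{2}{2})=0$). The only thing to verify, then, is that the hypothesis of Proposition \ref{PROPgLinfty} holds with $q=2$ for every permutation $\sigma$ and every $f\in L_\infty$, namely that $\|S_{\sigma,N}[f]-f\|_2\to 0$. Once that is established, the Hölder estimate from the proof of Proposition \ref{PROPgLinfty} yields $\|S_{\sigma,N}[f]g-fg\|_2\le \|S_{\sigma,N}[f]-f\|_2\|g\|_\infty\to 0$ for any $g\in L_\infty$.

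To verify $L_2$-convergence of the rearranged partial sums, first note that $L_\infty(\TT)\subseteq L_2(\TT)$, so by Parseval $\sum_{k\in\Z}|\widehat{f}(k)|^2<\infty$. Let $A_N := \sigma(\{-N,\ldots,N\})$. Then $A_N\subseteq A_{N+1}$ and $\bigcup_N A_N = \sigma(\Z)=\Z$ since $\sigma$ is a bijection. By Parseval applied to $f - S_{\sigma,N}[f]$ (whose Fourier transform is supported on $\Z\setminus A_N$), we have
\[
\|f-S_{\sigma,N}[f]\|_2^2 \;=\; \sum_{k\notin A_N}|\widehat{f}(k)|^2,
\]
and the tail of a summable series over an exhausting increasing family of sets tends to zero. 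Hence $\|S_{\sigma,N}[f]-f\|_2\to 0$ for every $f\in L_\infty$ and every $\sigma\in S_\infty$.

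There is no real obstacle: the heavy lifting is already in Proposition \ref{PROPgLinfty}, and the remaining ingredient is the standard unconditional convergence of Fourier series in $L_2$ which follows from the fact that $\{e_n\}_{n\in\Z}$ is an orthonormal basis of $L_2(\TT)$. The takeaway is that the multiplication operator viewpoint makes the $L_\infty$ case entirely trivial, which is precisely what motivates the more delicate analysis in the next sections where $g$ is only in $L_2$ (rather than $L_\infty$).
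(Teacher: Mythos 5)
Your proof is correct and follows essentially the same route as the paper's: the key bound $\|S_{\sigma,N}[f]g-fg\|_2\le\|S_{\sigma,N}[f]-f\|_2\|g\|_\infty$ (which you obtain as the $q=2$ case of Proposition \ref{PROPgLinfty}, while the paper states it directly), combined with the observation that $\|S_{\sigma,N}[f]-f\|_2\to 0$ for any permutation because $\{e_n\}$ is an orthonormal basis of $L_2$. You simply spell out the Parseval/exhaustion detail that the paper compresses into a single clause.
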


\begin{proof}
%The exponential system $\{e^{2\pi int}\}_{n\in\Z}$ is an orthonormal basis for $L_2$, so for any $f\in L_\infty(\TT)$, $S_N[f]\to f$ unconditionally in $L_2$, hence any permuted partial sums of the Fourier series of $f$ converges to $f$ in $L_2$, whence applying Proposition \ref{PROPgLinfty} with $q=2$ and $p=\infty$ yields the conclusion.
The norm in question is majorized by $\|S_{\sigma,N}[f]-f\|_2\|g\|_\infty$ which converges for any permutation since the exponential system is an orthonormal basis for $L_2$.
\end{proof}

The consequences of this proposition are two-fold.  First, it shows that if we are to find a counterexample to Ulyanov's problem (specifically to condition \textit{(ii)} in Theorem \ref{THMEquivalences}), we must search for a $g\in L_2\setminus L_\infty$ for which $\|S_{\sigma,N}[f]g-fg\|_2$ does not converge to $0$ for any permutation $\sigma$.  Secondly, it implies that $L_\infty\subset \V_{S_\infty}$, but we will demonstrate that in fact $\V_{S_\infty}=L_\infty$.

\begin{theorem}\label{THMVLinfty}
$\V_{S_\infty}=L_\infty(\TT)$.
\end{theorem}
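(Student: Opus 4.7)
The inclusion $L_\infty(\TT)\subseteq\V_{S_\infty}$ is already given by Corollary \ref{CORgLinfty}, so my plan is to prove the reverse inclusion. I take $g\in\V_{S_\infty}$ and aim to show $g\in L_\infty$ by reducing, through a sequence of duality steps, to the statement that the multiplication operator $M_g\colon L_2\to L_2$ is everywhere defined (and hence bounded by the closed graph theorem). Fix $h\in L_2$ and set $\phi:=g\bar h\in L_1$. Pairing the convergence $S_{\sigma,N}[f]g\to fg$ in $L_2$ with $h$ and applying Parseval to the trigonometric polynomial $S_{\sigma,N}[f]$, the rearranged partial sums $\sum_{|n|\le N}\hat f(\sigma(n))\hat\phi(-\sigma(n))$ converge to $\int f\phi\,dt=\sum_n\hat f(n)\hat\phi(-n)$ for every permutation $\sigma$. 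Since this is a scalar series, invariance under every rearrangement is equivalent to absolute convergence, so $\Phi_h(f):=\sum_n|\hat f(n)\hat\phi(-n)|<\infty$ for every $f\in L_\infty$. The functional $\Phi_h$ is absolutely homogeneous, subadditive, and lower semicontinuous on the Banach space $L_\infty$; a Baire category argument applied to the closed sublevel sets $\{\Phi_h\le n\}$ therefore produces $C_h$ with $\Phi_h(f)\le C_h\|f\|_\infty$.

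Consequently, for each sign sequence $\epsilon\in\{\pm1\}^\Z$, the complex-linear functional $L_\epsilon(f):=\sum_n\epsilon_n\hat f(n)\hat\phi(-n)$ on $L_\infty$ is bounded by $C_h\|f\|_\infty$. Restricting to $C(\TT)$ and invoking Riesz representation yields a Borel measure $\mu_\epsilon$ on $\TT$ with $\|\mu_\epsilon\|_{TV}\le C_h$ and Fourier coefficients $\hat\mu_\epsilon(n)=\epsilon_{-n}\hat\phi(n)$; in particular $|\hat\mu_\epsilon(n)|=|\hat\phi(n)|$ for every $n$. For the Fejér sums this gives $\|\sigma_N\mu_\epsilon\|_1\le\|\mu_\epsilon\|_{TV}\le C_h$ uniformly in $\epsilon$ and $N$, while Khintchine's inequality applied pointwise in $t$ yields
\[
\EE_\epsilon|(\sigma_N\mu_\epsilon)(t)|\asymp\Bigl(\sum_{|n|\le N}(1-\tfrac{|n|}{N+1})^2|\hat\phi(n)|^2\Bigr)^{1/2},
\]
a quantity independent of $t$. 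Integrating and letting $N\to\infty$ forces $\sum_n|\hat\phi(n)|^2\le cC_h^2$, i.e., $\phi=g\bar h\in L_2(\TT)$.

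Since $g\bar h\in L_2$ for every $h\in L_2$, the operator $M_g$ is everywhere defined on $L_2$; it is closed (convergence in $L_2$ yields an a.e.\ subsequential limit that must agree with $gh$), so by the closed graph theorem $M_g$ is bounded, and the standard identity $\|M_g\|_{\mathcal B(L_2)}=\|g\|_\infty$ gives $g\in L_\infty(\TT)$. The main obstacle is the Baire / uniform-boundedness step on the sublinear (not linear) functional $\Phi_h$, and correctly passing from an abstract element of $(L_\infty)^*$ to an honest Borel measure on $\TT$ so that the Khintchine-Fejér argument in $M(\TT)$ becomes available; a naive Banach–Steinhaus applied directly to the operators $M_{S_{\sigma,N}[f]}$ only recovers $g\in L_2$ and is insufficient.
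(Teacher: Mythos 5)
Your proof is correct and takes a genuinely different route from the paper. The paper first reduces the reverse inclusion (Propositions \ref{equivvstar} and \ref{equivLinfit}) to the quantitative statement that for every $E\subseteq\TT$ of positive measure there are $f\in C(\TT)$ with $\|f\|_\infty\le 1$ and signs $\Epsilon$ with $\|T_\Epsilon[f]\mathbbm{1}_E\|_2\ge c$ for an absolute $c>0$, and then verifies this via an explicit Dirichlet-kernel construction localized near a Lebesgue density point of $E$ (Theorem \ref{THMC0C1}), which yields explicit constants tied to the Littlewood problem (Problems \ref{PROBC0}, \ref{PROBC1}). You instead argue softly by duality: pairing $S_{\sigma,N}[f]g\to fg$ against $h\in L_2$ turns the hypothesis into unconditional convergence of the scalar series $\sum_n\widehat{f}(n)\widehat{\phi}(-n)$ with $\phi=g\bar h$, hence absolute convergence; a Baire category argument on the everywhere-finite, lower semicontinuous, absolutely homogeneous, subadditive functional $\Phi_h$ gives $\Phi_h(f)\le C_h\|f\|_\infty$; Riesz representation on $C(\TT)$, the Fej\'er bound $\|\sigma_N\mu_\epsilon\|_1\le\|\mu_\epsilon\|_{TV}\le C_h$, and the $L_1$ Khintchine inequality then force $\widehat{\phi}\in\ell_2$, so $g\bar h\in L_2$ for every $h\in L_2$, and the closed graph theorem applied to $M_g$ gives $g\in L_\infty$. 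The paper's approach is constructive and yields quantitative information; yours is abstract and self-contained, directly showing that $M_g$ is everywhere defined. One step worth spelling out (but which does hold): deducing absolute convergence from ``every rearranged $\Z$-symmetric partial sum converges'' is not literally the $\N$-indexed Riemann rearrangement theorem; it follows because the hypothesis forces $a_{\sigma(2N)}+a_{\sigma(2N-1)}\to 0$ for every reindexing and hence $a_n\to 0$, after which (your terms are in any case bounded, $|a_n|\le\|f\|_\infty\|\phi\|_1$) the standard oscillation construction produces a divergent rearrangement whenever $\sum|a_n|=\infty$.
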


The proof of this theorem will be accomplished in a series of intermediate steps, and we will pose some interesting problems along the way.

First, given a sequence of signs $\Epsilon:=\{\eps_n\}_{n\in\Z}\in\{-1,1\}^\Z$, and $f\in C(\TT)$ with Fourier coefficients $(c_n)$, define
\[T_{\Epsilon,N}[f](t):=\sum_{|n|\leq N}\eps_nc_ne^{2\pi int},\]
and let $T_{\Epsilon}[f]$ be the $L_2$ limit as $N\to\infty$ of $T_{\Epsilon,N}[f]$ (whose existence is guaranteed by basic considerations).  Next, given a $g\in L_2$, denote by $L_{2,g}$ the Hilbert space of measurable functions on $\supp(g)$  equipped with the inner product $\bracket{f_1,f_2}_{L_{2,g}}:=\bracket{gf_1,gf_2}_{L_2}.$ Note that, for functions in $C(\TT)$, convergence in SOT is equivalent to  convergence  in $L_{2,g}$ for every $g\in L_2$.

\begin{proposition}\label{equivvstar}
For a given $g\in L_2$, the following are equivalent:
\begin{enumerate}[(i)]
\item $S_{\sigma,N}[f]$ converges in $L_{2,g}$ for every permutation $\sigma$ and every $f\in C(\TT)$.
\item For every $\Epsilon\in\{-1,1\}^\Z$ and every $f\in C(\TT)$, $T_{\Epsilon,N}[f]$ converges in $L_{2,g}$.
\item There exists a constant $C_g$ independent of $\Epsilon, N$, and $f\in C(\TT)$, such that
\[\|T_{\Epsilon,N}[f]\|_{L_{2,g}}\leq C_g\|f\|_\infty.\]
\end{enumerate}
\end{proposition}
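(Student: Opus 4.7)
The plan is to establish the cycle $(iii)\Rightarrow(i)$, $(iii)\Rightarrow(ii)$, $(ii)\Rightarrow(iii)$, with $(i)\Rightarrow(ii)$ completing the circle via a classical unconditional-convergence argument. Throughout I will work inside the Hilbert space $L_{2,g}$ and exploit uniform density of trigonometric polynomials in $C(\TT)$.

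For $(iii)\Rightarrow(i)$, I would first use a polarization identity to transfer the bound in $(iii)$ from sign-modified to rearranged partial sums. Given a permutation $\sigma$ and $N\in\N$, pick $M$ so large that $\sigma([-N,N])\subset[-M,M]$ and define $\Epsilon$ by $\eps_k=1$ if $k\in\sigma([-N,N])$ and $\eps_k=-1$ otherwise; then $S_{\sigma,N}[f]=\tfrac12(S_M[f]+T_{\Epsilon,M}[f])$, so applying $(iii)$ to both the constant sign sequence and to $\Epsilon$ yields the uniform bound $\|S_{\sigma,N}[f]\|_{L_{2,g}}\le C_g\|f\|_\infty$. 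Convergence then follows by a standard density argument: approximate $f\in C(\TT)$ by a trigonometric polynomial $P$ with $\|f-P\|_\infty<\eps$; once $\sigma([-N,N])$ contains $\supp\widehat P$ one has $S_{\sigma,N}[P]=P$, and the uniform bound gives $\|S_{\sigma,N}[f]-S_{\sigma,M}[f]\|_{L_{2,g}}\le 2C_g\eps$ for all sufficiently large $N,M$, so that $S_{\sigma,N}[f]$ is Cauchy in $L_{2,g}$ and converges to $fg$. The proof of $(iii)\Rightarrow(ii)$ is analogous but simpler: $T_{\Epsilon,N}[P]$ stabilizes past the degree of $P$, and $(iii)$ directly bounds $\|T_{\Epsilon,N}[f-P]\|_{L_{2,g}}\le C_g\eps$.

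For $(ii)\Rightarrow(iii)$ the main tool is the uniform boundedness principle, but extracting uniformity over sign sequences is the central subtlety. Fix $f\in C(\TT)$ with Fourier coefficients $(c_n)$ and set
\[\Phi_f(\Epsilon):=\sup_N\|T_{\Epsilon,N}[f]\|_{L_{2,g}}\]
on the compact metrizable space $\{-1,1\}^\Z$. For each fixed $N$, $\Epsilon\mapsto\|T_{\Epsilon,N}[f]\|_{L_{2,g}}$ depends only on finitely many coordinates of $\Epsilon$ and is therefore continuous, so $\Phi_f$ is lower semicontinuous; by $(ii)$ it is everywhere finite. Applying the Baire category theorem to the closed sublevel sets $\{\Phi_f\le M\}$ yields a nonempty basic cylinder $\{\Epsilon:\eps_{n_i}=s_i,\ 1\le i\le k\}$ on which $\Phi_f\le M$. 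Given any $\Epsilon'\in\{-1,1\}^\Z$, overwriting its values at $n_1,\dots,n_k$ produces an $\Epsilon''$ in the cylinder, and $T_{\Epsilon',N}[f]-T_{\Epsilon'',N}[f]$ involves only the Fourier modes at $n_1,\dots,n_k$, so its $L_{2,g}$-norm is at most $2\|g\|_2\sum_{i=1}^k|c_{n_i}|$ uniformly in $N$. Hence $\sup_{\Epsilon,N}\|T_{\Epsilon,N}[f]\|_{L_{2,g}}<\infty$ for every $f\in C(\TT)$, and a final application of UBP on the Banach space $C(\TT)$ produces the constant $C_g$ in $(iii)$.

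The remaining implication $(i)\Rightarrow(ii)$ is the classical translation of permutation convergence into sign convergence; I would derive from $(i)$ the unconditional Cauchy criterion
\[\sup\Bigl\{\Bigl\|\sum_{n\in F}c_ne_n\Bigr\|_{L_{2,g}}:F\subset\Z\setminus[-N_0,N_0]\text{ finite}\Bigr\}\to 0\quad\text{as }N_0\to\infty,\]
whose contrapositive is established by placing a sequence of disjoint violating blocks $F_k$ into consecutive symmetric windows of a carefully chosen permutation. Once this criterion is in hand, $T_{\Epsilon,N}[f]$ is handled by splitting according to the sign of $\eps_n$. I expect the Baire category step in $(ii)\Rightarrow(iii)$ to be the main obstacle, since a direct UBP does not yield uniformity over the uncountable family of sign sequences and the argument genuinely depends on the compactness of $\{-1,1\}^\Z$ together with the lower semicontinuity of $\Phi_f$.
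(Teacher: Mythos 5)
Your proof is correct, and it takes a genuinely different route from the paper's in the two nontrivial implications. For $(ii)\Rightarrow(iii)$, the paper runs a gliding--hump argument: assuming no uniform constant, it extracts disjoint blocks $f_k$ with $\|f_k\|_\infty=1$ and $\|T_{\Epsilon,N}[f_k]\|_{L_{2,g}}\geq k^2$, and builds a single continuous $f=\sum k^{-2}f_k$ for which $T_{\Epsilon,N}[f]$ visibly fails to be Cauchy. Your argument instead uses Baire category on the compact metrizable space $\{-1,1\}^\Z$, the lower semicontinuity of $\Phi_f$, the finite perturbation of a cylinder, and one last UBP over $f$. Both are correct; yours is softer and avoids the explicit counterexample construction, at the cost of an extra abstract step (Baire on $\{-1,1\}^\Z$) that the paper does not need. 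For $(iii)\Rightarrow(ii)$ the paper invokes R\'ev\'esz's theorem to obtain a sub-polynomial of $f$, whereas you only use the much cheaper fact that trigonometric polynomials are uniformly dense in $C(\TT)$; your route is strictly lighter and is perfectly adequate here. Your polarization identity $S_{\sigma,N}[f]=\tfrac12\bigl(S_M[f]+T_{\Epsilon,M}[f]\bigr)$ for $(iii)\Rightarrow(i)$ is a slick direct reduction that the paper does not use (it instead cycles through $(ii)$ via a citation to Kashin--Sahakian for $(i)\Leftrightarrow(ii)$). Your sketch of $(i)\Rightarrow(ii)$ via the unconditional Cauchy criterion and disjoint violating blocks is the standard Banach-space argument and is fine as an outline; just make sure, when you place the blocks into the symmetric windows $\{|n|\le N\}$ demanded by the paper's definition of $S_{\sigma,N}$, that the bookkeeping keeps each $F_k$ inside a single window $(\!N_k,N_{k+1}]\cup[-N_{k+1},-N_k)$, so that a difference of two rearranged partial sums reproduces $\sum_{n\in F_k}c_n e_n$ exactly.
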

\begin{proof}
$ (i)\Longleftrightarrow (ii) $ follows from  $ I, \S 1, $  Theorem 1 in \cite{kashsahak}, for example.

$(ii)\implies (iii)$: Suppose there is no such $C_g$. Using the fact that $\|T_{\mathcal{E}, N}[f]\|_{L_{2,g}}\leq (2N+1) \|f\|_\infty\|g\|_2$, there exist integers $N_k<M_k<N_{k+1}$ $k=1,2,\dots$,  coefficients $c_n$, and signs $\eps_n\in \{-1,1\}$, such that
	\[\left\|\sum_{N_k\leq |n|\leq M_k}c_ne^{2\pi i n t}\right\|_\infty=1\;\;\text{and}\,\; \left\|\sum_{N_k\leq |n|\leq M_k}\eps_nc_ne^{2\pi i n t}\right\|_{L_{2,g}}\geq k^2.\]
	Then let
	$$f(t):=\sum_{k=1}^\infty\frac{1}{k^2}f_k(t):=\sum_{k=1}^{\infty}\frac{1}{k^2}\sum_{N_k\leq |n|\leq M_k}c_ne^{2\pi i n t}.$$
	The series above is uniformly convergent, and thus $f$ is in $C(\TT)$. However, $T_{\mathcal{E}, N}[f]$ is not convergent in $L_{2,g}$ as it fails to be Cauchy.  Indeed, note that $T_{\Epsilon,M_k}[f_j]-T_{\Epsilon,N_k}[f_j]$ is 0 if $j\neq k$ and $\sum_{N_k\leq|n|\leq M_k}\eps_nc_ne^{2\pi int}$ if $j=k$.  Consequently, by linearity of $T_{\Epsilon,N}$ and the assumption on the $L_{2,g}$ norm of the given function, $\|T_{\Epsilon,M_k}[f]-T_{\Epsilon,N_k}[f]\|_{L_{2,g}}\geq1$ for all $k$.
	
		$(iii)\implies (ii)$: Let $\delta>0$, $\Epsilon\in\{-1,1\}^\Z$, and $f\in C(\TT)$ be fixed but arbitrary. By Theorem \ref{THMRevesz}, there exists a permutation $\sigma$ of $\Z$ and an integer $N>0$ such that \[\|S_{\sigma,N}[f]-f\|_\infty<\frac{\delta}{2C_g}.\]
Put $P=S_{\sigma,N}[f]$ and let  $$ n>m\geq \max_{ |k|\leq N}{\sigma(k)} .$$
Then $T_{\mathcal{E},n}[P]=T_{\mathcal{E},m}[P]$, hence
\begin{align*}
\|T_{\mathcal{E},n}[f]-T_{\mathcal{E},m}[f]\|_{L_{2,g}} & \leq \|T_{\mathcal{E},n}[f]-T_{\mathcal{E},n}[P]\|_{L_{2,g}}+\|T_{\mathcal{E},m}[f]-T_{\mathcal{E},m}[P]\|_{L_{2,g}} \\
 &= \|T_{\mathcal{E},n}[f-P]\|_{L_{2,g}}+\|T_{\mathcal{E},m}[f-P]\|_{L_{2,g}}\leq 2C_g\|P-f\|_\infty\\
 & <\delta,\\
\end{align*}
which completes the proof as $\delta$ was arbitrary.		
\end{proof}

\begin{remark}\label{REMVstarLinfty}
Note that if the conditions in the previous theorem are satisfied then for any permutation $ \sigma $, $S_{\sigma, N}[f]$ converges to $f$ in $L_{2,g}$, hence $g\in \V_{S_\infty}$.
\end{remark}

\begin{proposition}\label{equivLinfit}The following are equivalent
	\begin{enumerate}[(i)]
\item $\V_{S_\infty}=L_\infty(\TT)$.
\item There exists an absolute constant $c>0$ such that for any measurable set $E\subseteq \TT$ with $ |E|>0 $ there exists a function $f\in C(\TT)$ with $ \|f\|_\infty\leq 1 $, and there exists a sequence $\mathcal{E}=\{\eps_n\}\in\{-1,1\}^\Z$ for which
$$  \|T_{\mathcal{E}}[f]\mathbbm{1}_E\|_{2}\geq c.$$
	\end{enumerate}
\end{proposition}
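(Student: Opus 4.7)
The plan is to introduce
\[\phi(E) \,:=\, \sup\bigl\{\|T_{\Epsilon}[f]\mathbbm{1}_{E}\|_{2} : \Epsilon\in\{-1,1\}^{\Z},\ f\in C(\TT),\ \|f\|_{\infty}\leq 1\bigr\},\]
so that (ii) is precisely the statement $\inf_{|E|>0}\phi(E)>0,$ and then to establish two features of $\phi.$ The first is the \emph{sign-averaging identity}
\[T_{\Epsilon,N}[f]=\tfrac{1}{2}\bigl(T_{\Epsilon^{(+)}}[f]+T_{\Epsilon^{(-)}}[f]\bigr),\]
where $\Epsilon^{(\pm)}_{n}=\eps_{n}$ for $|n|\leq N$ and $\Epsilon^{(\pm)}_{n}=\pm 1$ otherwise; the tails cancel in the sum. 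Combining this with $\|T_{\Epsilon,N}[f]-T_{\Epsilon}[f]\|_{2}\to 0,$ one gets
\[\phi(E)=\sup_{\Epsilon,\,N,\,\|f\|_{\infty}\leq 1}\|T_{\Epsilon,N}[f]\mathbbm{1}_{E}\|_{2}.\]
The second feature is \emph{translation invariance}: since $T_{\Epsilon}[f(\cdot-t)]=T_{\Epsilon}[f](\cdot-t),$ a change of variables yields $\phi(E-t)=\phi(E),$ and Fubini gives $\int_{\TT}|(E-t)\cap A|\,dt=|E|\,|A|,$ so every positive-measure $A$ contains positive-measure subsets on which $\phi$ is arbitrarily small whenever (ii) fails.

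For $(ii)\Rightarrow(i)$, assume (ii) with constant $c,$ let $g\in \V_{S_\infty},$ and let $C_{g}$ be the bound furnished by Proposition \ref{equivvstar}(iii). Suppose $g\notin L_{\infty};$ for each $k$ the set $E_{k}=\{|g|>k\}$ has $|E_{k}|>0,$ so (ii) yields $f_{k},\Epsilon_{k}$ with $\|f_{k}\|_{\infty}\leq 1$ and $\|T_{\Epsilon_{k}}[f_{k}]\mathbbm{1}_{E_{k}}\|_{2}\geq c,$ whence $\|T_{\Epsilon_{k}}[f_{k}]g\|_{2}\geq ck.$ Passing to an almost-everywhere convergent subsequence of $T_{\Epsilon_{k},N}[f_{k}]\to T_{\Epsilon_{k}}[f_{k}]$ in $L_{2}$ and invoking Fatou's lemma gives $\sup_{N}\|T_{\Epsilon_{k},N}[f_{k}]\|_{L_{2,g}}\geq ck,$ contradicting the Proposition \ref{equivvstar}(iii) bound once $k>C_{g}/c.$ Hence $g\in L_{\infty},$ and the reverse inclusion is Proposition \ref{CORgLinfty}.

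For $(i)\Rightarrow(ii)$ I argue the contrapositive. If (ii) fails, then for every $k$ there is $E_{k}$ with $|E_{k}|>0$ and $\phi(E_{k})\leq 2^{-k}.$ Using translation invariance and the Fubini identity above, I inductively choose disjoint sets $F_{k}\subseteq A_{k}:=\TT\setminus\bigcup_{j<k}F_{j}$ of the form $F_k=(E_k-t_k)\cap A_k$ with $|F_{k}|\leq 2^{-k-1}$ and $\phi(F_{k})\leq 2^{-k};$ the geometric bound $\sum_{k}2^{-k-1}<1$ keeps $|A_{k}|\geq 1/2$ so the induction never stalls. Put $g:=\sum_{k}k\,\mathbbm{1}_{F_{k}},$ so $\|g\|_{2}^{2}\leq\sum_{k}k^{2}2^{-k-1}<\infty$ while $g$ is unbounded, i.e. $g\in L_{2}\setminus L_{\infty}.$ By disjointness of $\{F_{k}\}$ and the partial-sum form of $\phi,$
\[\|T_{\Epsilon,N}[f]\,g\|_{2}^{2}=\sum_{k}k^{2}\|T_{\Epsilon,N}[f]\mathbbm{1}_{F_{k}}\|_{2}^{2}\leq\Bigl(\sum_{k}k^{2}\phi(F_{k})^{2}\Bigr)\|f\|_{\infty}^{2}\leq\Bigl(\sum_{k}\tfrac{k^{2}}{4^{k}}\Bigr)\|f\|_{\infty}^{2},\]
so Proposition \ref{equivvstar} places $g\in\V_{S_\infty}\setminus L_{\infty},$ contradicting (i).

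The main obstacle is upgrading the full-series quantity $\phi$ to its partial-sum analogue: a naive approach via $T_{\Epsilon,N}[f]=T_{\Epsilon}[S_{N}[f]]$ introduces the Lebesgue constant $L_{N}\sim\log N,$ which diverges and would make the estimate on $\|T_{\Epsilon,N}[f]g\|_{2}$ in the $(i)\Rightarrow(ii)$ construction useless. The sign-averaging identity $T_{\Epsilon,N}[f]=\tfrac{1}{2}(T_{\Epsilon^{(+)}}[f]+T_{\Epsilon^{(-)}}[f])$ is the conceptual device that bridges the two formulations; once it is in place, the rest of the proof is a disciplined bookkeeping of $\phi$-small sets, with translation invariance playing the routine role of making such sets plentiful inside any prescribed region.
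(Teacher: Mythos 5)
Your proof is correct, and the overall architecture — shuttle between condition (ii) and the $L_{2,g}$-boundedness criterion of Proposition \ref{equivvstar}, then build an unbounded $g\in\V_{S_\infty}$ out of characteristic functions of ``$\phi$-small'' sets — is the same as the paper's. Your $(ii)\Rightarrow(i)$ direction is essentially the paper's argument with Fatou substituted for a simpler ``choose $N$ large enough that $\|T_{\Epsilon_k,N}[f_k]\mathbbm{1}_{E_k}\|_2\geq c/2$'' step; either works.

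Where you genuinely diverge is in two places in the $(i)\Rightarrow(ii)$ direction, and both are improvements on the exposition. First, the paper passes immediately from the negation of (ii) (which is a statement about the \emph{full} series $T_{\Epsilon}[f]$) to the partial-sum bound $\|T_{\Epsilon,N}[f]\mathbbm{1}_{E_k}\|_2\leq 1/k^2$ without explaining why; your sign-averaging identity $T_{\Epsilon,N}[f]=\tfrac12(T_{\Epsilon^{(+)}}[f]+T_{\Epsilon^{(-)}}[f])$ is exactly the step that makes this legitimate, and it deserves to be said out loud. Second, to disjointify the bad sets the paper runs a somewhat delicate nested-subset construction (``find $F_0\subseteq E_0$ with $|F_0|>0$ so that infinitely many $E_k$ still meet $\TT\setminus F_0$ in positive measure, then recurse''), which is asserted as ``a simple measure-theoretic argument'' but is not fully spelled out. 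Your device — translation invariance of $\phi$ together with the Fubini identity $\int_{\TT}|(E-t)\cap A|\,dt=|E||A|$ — replaces that recursion with a clean averaging argument that manifestly never stalls, at the (acceptable) cost of exploiting the group structure of $\TT$ rather than being purely measure-theoretic. You also get the summability $\sum_k k^2|F_k|<\infty$ for free by shrinking $F_k$, whereas the paper claims $|E_k|\leq 1/k^4$ can be arranged without comment. So: same skeleton, but a more explicit and, in my view, more robust execution of the $(i)\Rightarrow(ii)$ disjointification.
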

\begin{proof} $ (i)\implies(ii)$:  If $(ii)$ is false, then for every $k\in\N$ there exists a set $E_k$ with $ |E_k|>0 $ such that for any function $f\in C(\TT)$ with $\|f\|_\infty\leq 1$, any sequence  $\mathcal{E}\in\{-1,1\}^\Z,$ and any $N\in\N,$
\begin{equation}\label{E:smallest condition for V} \|T_{\mathcal{E},N}[f]\mathbbm{1}_{E_k}\|_{2}\leq \frac{1}{k^2}.
\end{equation}
	We claim that we can assume, without loss of generality, that the sets $E_k$ are mutually disjoint, and that $|E_k|\leq \frac{1}{k^4}$. To see this, suppose that $(E_{k})_{k}$ are any sets satisfying (\ref{E:smallest condition for V}). First note that if we replace any of the $E_{k}$'s with a subset of positive measure, then they still satisfy (\ref{E:smallest condition for V}). It is a simple measure-theoretic argument to show that we may find a subset $F_0\subseteq E_0$ with $|F_0|>0,$ and so that $I_{0}=\{k\in \N: |E_{k}\cap (\TT\setminus F_0)|>0\}$ is infinite. Fix a $k_{1}\in I_0,$ by another simple measure-theoretic argument, we may find a subset $F_{k_1}\subseteq E_{k_1}$ with $|F_{k_{1}}|>0$ and so that $I_{1}=\{k\in I_0:k>k_{1}, |E_{k}\cap (\TT\setminus F_{k_1})|>0\}$ is infinite. Continuing in this way, we find a decreasing sequence $I_{l}$ of infinite subsets of $\Z,$ an increasing sequence $k_{l}$ of integers, and a sequence $F_{k_{l}}\subseteq E_{k_{l}}$ so that: $k_{l}\in I_{l-1},$ so that $|F_{k_{l}}|>0,$ and so that for all $r\in I_{k_{l}}$ we have that $|E_{r}\cap (\TT\setminus F_{k_{l}})|>0.$ Replacing the $E_{k}$ with $F_{k_{l}}$ and then re-indexing and renaming, we may assume that the $E_k$ are disjoint.

    Now consider the function
	\[ g=\sum_{k=1}^{\infty}k\cdot \mathbbm{1}_{E_k}. \]
	Note that $g\in L_2\setminus L_\infty$, and that for every $f\in C(\TT)$ with $\|f\|_\infty\leq 1$, any sequence of signs  $\mathcal{E}$ and any $N\in\N$,
\[ \|T_{\mathcal{E},N}[f]\|^2_{L_{2,g}}= \sum_{k=1}^{\infty}  k^2\int_{E_k}|T_{\mathcal{E},N}[f]|^2dt\leq\sum_{k=1}^{\infty}  \frac{1}{k^2}<\infty. \]
 	Consequently, there is a constant $C$ (in particular $C=\pi^2/6$) such that for every $\Epsilon$, $f\in C(\TT)$ with $\|f\|_\infty\leq1$, and every $N\in\N$, $\|T_{\Epsilon,N}[f]\|_{L_{2,g}}\leq C$, whence Proposition \ref{equivvstar} and Remark \ref{REMVstarLinfty} imply that $S_{\sigma,N}[f]$ converges to $f$ in $L_{2,g}$ for any permutation $\sigma$.  It follows that $g\in \V_{S_\infty}\setminus L_\infty$, which contradicts $(i)$.
 	
$ (ii)\implies (i)$: Suppose $g\notin L_\infty$. Then for every positive integer $k$ there exists a set $E_k$ of positive measure such that $|g(t)|>k$ for almost every $t\in E_k$. By assumption, there must be a function $f_k\in C(\TT)$ with $ \|f_k\|_\infty\leq 1 $, and a sequence $\mathcal{E}\in \{-1,1\}^\Z $  for which
\[ \|T_{\mathcal{E}}[f_k]\mathbbm{1}_{E_k}\|_{2}\geq c\]
 	where $c$ is independent of $k$. Then for sufficiently large $N_k$, we have
\[ \|T_{\mathcal{E},N_k}[f_k]\mathbbm{1}_{E_k}\|_{2}\geq \frac{c}{2}.\]	
Thus,
\[ \|T_{\mathcal{E},N_k}[f_k]\|_{L_{2,g}}\geq k^2\|T_{\mathcal{E},N_k}[f_k]\mathbbm{1}_{E_k}\|_{2}\geq \frac{ck^2}{2},\]
which, by the implication $(i)\Rightarrow (iii)$ of Proposition \ref{equivLinfit}, implies that $g\notin \V_{S_\infty}$.
\end{proof}

Note that we will prove Theorem \ref{THMVLinfty} by proving condition \textit{(ii)} of Proposition \ref{equivvstar}.  On our way to this, let us pause to consider some side problems which are of independent interest.

\begin{problem}\label{PROBC0}
What is the smallest constant $C_0\geq1$ such that there exists an $(\eps_n)_{n=0}^\infty\subset\{-1,1\}$ such that $\|\finsum{n}{0}{N}\eps_ne^{2\pi in\cdot}\|_{\infty}\leq C_0\sqrt{N+1}$ for { infinitely many $N$}?\footnote{We thank A. Sahakian for pointing out to us that $C_0\leq5$ due to \cite[Theorem 11, p.130]{kashsahak} and that this problem can be used to prove condition $(ii)$ of Proposition \ref{equivvstar}.}
\end{problem}

\begin{remark}\label{REMC0}
 Problem \ref{PROBC0} is a special case of {\em Littlewood's Problem} in $L_\infty$, for which the Rudin--Shapiro polynomials give an upper bound of $C_0\leq\sqrt{2}$ \cite[Chapter 4, Exercise 1]{Borwein}\footnote{We thank T. Erd\'{e}lye for pointing out this example to us.}. Meanwhile, $C_0\geq1$ because $\|\finsum{n}{0}{N}\eps_ne^{2\pi in\cdot}\|_{\infty}\geq\|\finsum{n}{0}{N}\eps_ne^{2\pi in\cdot}\|_{2} = \|(\eps_n)_{n=0}^N\|_{\ell_2} = \sqrt{N+1}$, where the equality holds by Parseval's Theorem.
\end{remark}
%From \cite[Theorem 11, p.130]{kashsahak}, we find that $C_0\leq5$.\footnote{We thank A. Sahakian for pointing out this theorem to us.}

\begin{problem}\label{PROBC1}
Find the largest constant $C_1>0$ such that for all $0<c<C_1$ and for every measurable $E\subset\TT$ with $|E|>0$, there exists an $f\in C(\TT)$ with $\|f\|_{\infty}\leq1$ and an $(\eps_n)_{n\in\Z}\subset\{-1,1\}$ such that $\|\dsum_{n\in\Z}\eps_nc_ne^{2\pi in\cdot}\mathbbm{1}_E\|_{2}\geq c$?
\end{problem}

Of course, if $C_1>0$, then in particular, Proposition \ref{equivLinfit} item (ii) has a positive answer, which implies that $\V_{S_\infty}=L_\infty$.  However, we prove something stronger in the following theorem by giving a lower bound on $C_1$ in terms of $C_0$.

\begin{theorem}\label{THMC0C1}
Letting $C_0$ and $C_1$ be as in Problems \ref{PROBC0} and \ref{PROBC1}, we have $$C_1\geq\dfrac{2\sqrt{2}}{3\sqrt{3\pi}}\dfrac{1}{C_0}.$$
\end{theorem}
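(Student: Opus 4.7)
The plan is to construct, for a given $E\subseteq\TT$ with $|E|>0$, an explicit $f\in C(\TT)$ with $\|f\|_\infty\leq 1$ and a sign pattern $\Epsilon=(\eps_n)_{n\in\Z}\in\{-1,1\}^{\Z}$ so that $T_\Epsilon[f]$ is a translated one-sided Dirichlet kernel concentrated at a Lebesgue density point of $E$. The concentration of $T_\Epsilon[f]$ together with the density of $E$ at the chosen point will automatically produce the required $L^2$-mass on $E$.

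First I would fix a Lebesgue density point $t^*\in E$, so that $|E\cap(t^*-\delta,t^*+\delta)|/(2\delta)\to 1$ as $\delta\to 0^+$. For $N$ in the infinite set from Problem~\ref{PROBC0} with a corresponding polynomial $P_N=\sum_{n=0}^N\eps_n^{(0)}e_n$ satisfying $\|P_N\|_\infty\leq C_0\sqrt{N+1}$, I define
\[f(t):=\frac{P_N(t-t^*)}{C_0\sqrt{N+1}},\]
so that $\|f\|_\infty\leq 1$ and the only nonzero Fourier coefficients of $f$ are $\widehat f(n)=\eps_n^{(0)}e^{-2\pi int^*}/(C_0\sqrt{N+1})$ for $0\leq n\leq N$. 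Choosing $\eps_n:=\eps_n^{(0)}$ for $0\leq n\leq N$ (the signs elsewhere are immaterial, since $\widehat f(n)=0$ there) yields
\[T_\Epsilon[f](t)=\frac{1}{C_0\sqrt{N+1}}\sum_{n=0}^{N}e^{2\pi in(t-t^*)}=\frac{D_N(t-t^*)}{C_0\sqrt{N+1}},\]
where $D_N(s)=\sum_{n=0}^{N}e^{2\pi ins}$ is the one-sided Dirichlet kernel, with $|D_N(s)|^2=\sin^2(\pi(N+1)s)/\sin^2(\pi s)$ and peak value $(N+1)^2$ at $s=0$. Conceptually, the sign change trades the $L^\infty$-flatness of $P_N$ for the strong concentration of $T_\Epsilon[f]$ at $t^*$.

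The core estimate is
\[\|T_\Epsilon[f]\mathbbm{1}_E\|_2^2=\frac{1}{C_0^2(N+1)}\int_E|D_N(t-t^*)|^2\,dt\geq\frac{8}{27\pi\,C_0^2}\]
for $N$ sufficiently large. To prove it, I would localize to the window $J_N:=[t^*-u_0/(N+1),t^*+u_0/(N+1)]$ for some fixed $u_0\in(0,1)$. The elementary bounds $\sin(\pi s)\leq\pi|s|$ and the concavity-secant inequality $\sin(\pi u)\geq(\sin(\pi u_0)/u_0)u$ on $[0,u_0]$ together give
\[|D_N(s)|^2\geq\frac{(N+1)^2\sin^2(\pi u_0)}{(\pi u_0)^2}\qquad\text{on }J_N,\]
and hence $\int_{J_N}|D_N(t-t^*)|^2\,dt\geq 2(N+1)\sin^2(\pi u_0)/(\pi^2 u_0)$. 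Since $\max_{J_N}|D_N|^2=(N+1)^2$ and density at $t^*$ forces $|J_N\setminus E|/|J_N|\to 0$ as $N\to\infty$, the contribution from $J_N\setminus E$ is $o(N+1)$ and the normalized lower bound reduces to $2\sin^2(\pi u_0)/(\pi^2 u_0C_0^2)$. Any $u_0$ with $2\sin^2(\pi u_0)/(\pi^2u_0)\geq 8/(27\pi)$ closes the argument; for instance, $u_0=1/2$ already yields $4/\pi^2>8/(27\pi)$, giving the desired lower bound and hence $C_1\geq(2\sqrt{2})/(3\sqrt{3\pi})\cdot 1/C_0$.

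The main obstacle is not the density step, which follows routinely from Lebesgue differentiation, but rather the quantitative trigonometric bookkeeping on the central lobe of $D_N$ needed to pin down the explicit constant. The essential conceptual step is the choice of $f$: by taking $f$ to be the translate $P_N(\cdot-t^*)/(C_0\sqrt{N+1})$ and applying the inverse signs $\eps_n^{(0)}$, the operator $T_\Epsilon$ sends $f$ to a Dirichlet kernel, converting the $L^\infty$-flatness of $P_N$ (a Littlewood-type property) into the localization property of $D_N$.
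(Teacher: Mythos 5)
Your proof is correct and follows essentially the same route as the paper: construct the normalized flat polynomial from Problem~\ref{PROBC0}, flip the signs to turn it into a scaled Dirichlet kernel, localize at a Lebesgue density point of $E$, and bound the central lobe of the kernel from below. The only differences are cosmetic — you translate $f$ to the density point $t^*$ instead of assuming $t^*=0$ WLOG, you use the concavity–secant bound $\sin(\pi u)\geq (\sin(\pi u_0)/u_0)u$ in place of the paper's $|\sin t/t|\geq 1-|t|$ (which in fact yields a slightly sharper constant than the one claimed), and you handle $E$ via a subtraction over the full window $J_N$ rather than directly lower-bounding $|E\cap J_N|$.
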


\begin{proof}
Let $0<\gamma_0<1$ and $E\subset\TT$ with $|E|>0$ be fixed but arbitrary.  Without loss of generality, we may assume that $0$ is a point of Lebesgue density for $E$; if not, then translate the function $f$ constructed below.  Consequently, $\lim_{\gamma\to0}|E\cap[-\gamma,\gamma]|/(2\gamma)=1.$  Therefore, there exists an $\gamma_1>0$ such that for every $\gamma<\gamma_1$,
\begin{equation}\label{EQLebesgueDensity}1\geq|E\cap[-\gamma,\gamma]|\geq2\gamma(1-\gamma_0)^2.
\end{equation}
Next, note that for every $t\in\R$, $|\sin t/t|\geq1-|t|$. Consequently, if $0<\delta<1$ is arbitrary, then
\begin{equation}\label{EQsinc}
\left|\frac{\sin t}{t}\right|\geq1-\delta,\quad |t|\leq\delta.
\end{equation}
Choose $N\in\N$ large enough so that $\frac{\delta}{\pi(N+1)}<\gamma_1$, and let $(\eps_n)_{n\in\Z}\subset\{-1,1\}$ be a sequence satisfying the conclusion of Problem \ref{PROBC1} with constant $C_0$ (note that by remark \ref{REMC0}, such a sequence exists which gives an upper bound of $\sqrt{2}\sqrt{N+1}$).  Subsequently, let
$$f(t):=\dfrac{1}{C_0\sqrt{N+1}}\finsum{n}{0}{N}\eps_ne^{2\pi int}.$$  Evidently, $\|f\|_{\infty}\leq1$, and it remains to consider the norm of $T_{\Epsilon}[f] = \frac{1}{C_0\sqrt{N+1}}\finsum{n}{0}{N}e^{2\pi in\cdot}$.  It is readily verified that $\finsum{n}{0}{N}e^{2\pi int}=e^{i\pi N}\sin(\pi(N+1)t)/\sin(\pi t)$, whence $$T_{\Epsilon}[f](t) = \dfrac{1}{C_0\sqrt{N+1}}(-1)^N\dfrac{\sin(\pi(N+1)t)}{\sin(\pi t)}.$$

It follows from \eqref{EQsinc} that for all $|t|\leq\delta/(\pi(N+1))$, $$|T_{\Epsilon}[f](t)|\geq\dfrac{1}{C_0\sqrt{N+1}}(1-\delta)(N+1)\geq\dfrac{\sqrt{N+1}}{C_0}(1-\delta).$$  Therefore, the norm of $T_{\Epsilon}[f]$ restricted to $E$ is bounded below by its norm restricted to $E\cap[-\delta/(\pi(N+1)),\delta/(\pi(N+1))]$, which is at least
$$\dfrac{\sqrt{N+1}}{C_0}(1-\delta)|E\cap[-\delta/(\pi(N+1)),\delta/(\pi(N+1))]|^\frac12.$$  Therefore, on account of \eqref{EQLebesgueDensity} and the choice of $N$,
$$\|T_{\Epsilon}[f]\mathbbm{1}_E\|_{L^2(\TT)}\geq\dfrac{\sqrt{N+1}}{C_0}(1-\delta)\sqrt\frac{2\delta}{\pi(N+1)}(1-\gamma_0).$$  Since $\delta$ is arbitrary, we can maximize the right hand side above by choosing $\delta=1/3$, which is where the maximum of the function $x\mapsto(1-x)\sqrt{x}$ is attained.  Thus the bound on $C_1$ follows since $\gamma_0$ is arbitrarily close to $0$.  In particular, for any $c<\frac{2\sqrt{2}}{3\sqrt{3\pi}}\frac{1}{C_0}$, there is a function $f\in C(\TT)$, such that $\|T_{\Epsilon}[f]\mathbbm{1}_E\|_{L_2(\TT)}\geq c$.
\end{proof}

\begin{remark}
By Remark \ref{REMC0}, we have that $\frac{2}{3\sqrt{3\pi}}\leq \frac{2\sqrt{2}}{3\sqrt{3\pi}}\frac{1}{C_0}\leq\frac{2\sqrt{2}}{3\sqrt{3\pi}}$.  The left hand side is approximately $0.217157$, while the right hand side is approximately $0.307106$.  Thus, it may be that the constant $C_1$ is rather small.  However, the method of proof used here is somewhat arbitrary in that we restricted our considerations to functions $f\in C(\TT)$ which are trigonometric polynomials with coefficients $\pm1$.  To get the best bound on $C_1$, one should consider more general functions.

In particular, we find it an interesting problem to consider if it is possible that $C_1=1$.
\end{remark}

The following corollary is immediate from Theorem \ref{THMC0C1}.

\begin{corollary}\label{CORc}
There exists a $c>0$ (in particular, $c=2\sqrt2/(15\sqrt{3\pi})$ works) such that for every $E\subset\TT$ with $|E|>0$, there exists an $f\in C(\TT)$ with $\|f\|_{\infty}\leq1$ and $\Epsilon\in\{-1,1\}^\Z$ such that $\|T_{\Epsilon}[f]\mathbbm{1}_E\|_{2}\geq c$.
\end{corollary}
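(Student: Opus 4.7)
The plan is to deduce this corollary directly from Theorem \ref{THMC0C1} together with the quantitative upper bound on the Littlewood-type constant $C_0$ from Remark \ref{REMC0}. Concretely, fix any measurable $E \subseteq \TT$ with $|E| > 0$. Theorem \ref{THMC0C1} produces, for each $c < \frac{2\sqrt{2}}{3\sqrt{3\pi}}\cdot\frac{1}{C_0}$, an $f \in C(\TT)$ with $\|f\|_{\infty} \leq 1$ and an $\Epsilon \in \{-1,1\}^{\Z}$ such that $\|T_{\Epsilon}[f]\mathbbm{1}_{E}\|_{2} \geq c$. Therefore it suffices to insert an admissible numerical value of $C_0$ on the right-hand side.

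Plugging the Sahakian-type bound $C_0 \leq 5$ referenced in Remark \ref{REMC0} into Theorem \ref{THMC0C1} yields $C_1 \geq \frac{2\sqrt{2}}{15\sqrt{3\pi}}$, which already gives the claimed constant with strict inequality; to upgrade to the non-strict inequality $\|T_{\Epsilon}[f]\mathbbm{1}_{E}\|_{2} \geq \frac{2\sqrt{2}}{15\sqrt{3\pi}}$ asserted in the corollary, invoke instead the sharper Rudin--Shapiro bound $C_0 \leq \sqrt{2}$, also recorded in Remark \ref{REMC0}. This gives $C_1 \geq \frac{2}{3\sqrt{3\pi}}$, and since $\frac{2}{3\sqrt{3\pi}} > \frac{2\sqrt{2}}{15\sqrt{3\pi}}$ (equivalent to $5 > \sqrt{2}$), the explicit value $c = \frac{2\sqrt{2}}{15\sqrt{3\pi}}$ sits strictly below $C_1$ and so is permissible in the conclusion of Theorem \ref{THMC0C1}. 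Since $E$ was arbitrary, this proves the corollary.

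There is no genuine obstacle here; the content is entirely packaged in Theorem \ref{THMC0C1} (whose proof combines the Dirichlet-kernel lower bound $|\sin t/t|\geq 1-|t|$ with the Lebesgue density theorem, optimizing the parameter $\delta$ at $1/3$) and in the classical Rudin--Shapiro construction providing $C_0 \leq \sqrt{2}$. The only minor point to watch is the strict versus non-strict inequality in the definition of $C_1$ from Problem \ref{PROBC1}, which is resolved precisely by the slack between $\sqrt{2}$ and $5$ in the available upper bounds on $C_0$.
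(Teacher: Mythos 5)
Your proof is correct and follows the same route the paper intends: the corollary is read off from Theorem \ref{THMC0C1} combined with the explicit bounds on $C_0$ recorded in Remark \ref{REMC0}. Your extra care about the strict versus non-strict inequality in the definition of $C_1$—resolved by observing $C_0\leq\sqrt{2}<5$ so that $c=2\sqrt{2}/(15\sqrt{3\pi})$ lies strictly below the guaranteed lower bound for $C_1$—is a legitimate detail that the paper glosses over with the word ``immediate.''
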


\begin{proof}[Proof of Theorem \ref{THMVLinfty}]
Combine Corollary \ref{CORc} with Proposition \ref{equivLinfit}.
\end{proof}

Another question that we think can be of interest is the following.
\begin{problem}
What is the largest subspace on which the Greedy rearrangement of the Fourier series (rearranging by decreasing order of the absolute value of the Fourier coefficients) converges in SOT for every continuous function?
\end{problem}

\section{Characterization of $\W_{S_\infty}$}\label{SECW}

The purpose of this section is to characterize $\W_{S_\infty}$.  Notice that  the Wiener algebra of functions with absolutely summable Fourier coefficients is contained in $\W_{S_\infty}$.  Indeed, let
$$A(\TT):=\left\{f\in L_1(\TT):\sum_{n\in\Z}|\widehat{f}(n)|<\infty\right\}.$$
For $f\in A(\TT)$, $S_N[f]$ converges absolutely (hence unconditionally) to $f$ in the uniform norm, which implies that $A(\TT)\subset \W_{S_\infty}$ since $\|S_{\sigma,N}[f]g-g\|_2\leq\|S_{\sigma,N}[f]-f\|_\infty\|g\|_2$.

However, it turns out that indeed, $\W_{S_\infty}$ is the Wiener algebra.  That is:
\begin{theorem}\label{equivWinf}
$\W_{S_\infty} = A(\TT).$
\end{theorem}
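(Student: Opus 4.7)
The inclusion $A(\TT)\subseteq\W_{S_\infty}$ is proved in the discussion preceding the theorem, so the content is the reverse containment. My plan begins by translating the hypothesis $f\in\W_{S_\infty}$ into a uniform boundedness statement. For every permutation $\sigma$ and every $k\in\Z$, the $L_{2}$ convergence of the Fourier series (which is unconditional for every $f\in L_2$) gives $\|S_{\sigma,N}[f]e_k - fe_k\|_2 = \|S_{\sigma,N}[f]-f\|_2 \to 0$, so SOT convergence against the dense set $\{e_k\}$ is automatic. Theorem \ref{minthm} therefore reduces the condition $f\in\W_{S_\infty}$ to the combinatorial statement
\[
\sup_{N}\|S_{\sigma,N}[f]\|_\infty < \infty \qquad\text{for every permutation }\sigma\colon\Z\to\Z,
\]
and I would argue by contradiction that this in turn forces $\sum_{n}|\widehat{f}(n)|<\infty$.

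The key elementary ingredient is a scalar Riemann-rearrangement fact: for every complex sequence $(a_n)_{n\in\Z}$,
\[
\sum_{n}|a_n|<\infty \iff \sup\left\{\left|\sum_{n\in F}a_n\right|\colon F\subseteq\Z\text{ finite}\right\}<\infty,
\]
with implicit constants at most $4$, proved by splitting into real/imaginary and positive/negative parts. Evaluating the partial sum $\sum_{n\in G}\widehat{f}(n)e^{2\pi inx}$ at a fixed point $x_0\in\TT$ and applying this fact to $a_n=\widehat{f}(n)e^{2\pi inx_0}$ shows that, if $\sum_{n}|\widehat{f}(n)|=\infty$, then for every finite $F_0\subseteq\Z$ and every $M>0$ there is a finite $G\subseteq\Z\setminus F_0$ with $\|\sum_{n\in G}\widehat{f}(n)e^{2\pi inx}\|_\infty\geq M$.

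Enumerating $\Z=\{z_1,z_2,\ldots\}$, I would then build by recursion a nested chain $\emptyset=F_0\subsetneq F_1\subsetneq F_2\subsetneq\cdots$ of finite subsets of $\Z$ with $\bigcup_k F_k=\Z$ and $|F_k|=2N_k+1$ for a strictly increasing sequence $N_k\in\N$. At stage $k$ I would use the lemma above to pick $G_k\subseteq\Z\setminus F_{k-1}$ with
\[
\left\|\sum_{n\in G_k}\widehat{f}(n)e^{2\pi inx}\right\|_\infty \geq k + \left\|\sum_{n\in F_{k-1}}\widehat{f}(n)e^{2\pi inx}\right\|_\infty + 2\|f\|_\infty,
\]
and set $F_k:=F_{k-1}\cup G_k\cup\{z_k\}\cup E_k$, where $E_k$ has cardinality at most one, chosen to make $|F_k|$ odd. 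The triangle inequality together with the bound $|\widehat{f}(n)|\leq\|f\|_\infty$ on each of the at most two padding elements then yields $\|\sum_{n\in F_k}\widehat{f}(n)e^{2\pi inx}\|_\infty\geq k$. Because the $F_k$ are nested with odd cardinalities and exhaust $\Z$, there is a bijection $\sigma\colon\Z\to\Z$ with $\sigma(\{-N_k,\ldots,N_k\})=F_k$ for every $k$; for this $\sigma$, $\|S_{\sigma,N_k}[f]\|_\infty\geq k$, contradicting the uniform boundedness established in the first paragraph.

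The main obstacle is, in all honesty, purely bookkeeping: the recursion must enforce nestedness, exhaustion of $\Z$, odd cardinality, and the sup-norm lower bound simultaneously. The four requirements cause no genuine tension, because the $+2\|f\|_\infty$ slack built into the choice of $G_k$ is precisely what absorbs the at most two padding integers added at each stage.
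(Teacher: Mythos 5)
Your proof is correct, and it is genuinely different from the paper's. The paper argues structurally: it first establishes (via the uniform boundedness principle) that $f\in\W_{S_\infty}$ is equivalent to $\sup_{\sigma,N}\|S_{\sigma,N}[f]\|_\infty<\infty$, shows that $\W_{S_\infty}$ is stable under restricting Fourier coefficients to any $J\subseteq\Z$, and then reduces via real/imaginary and positive/negative decompositions to the case of nonnegative Fourier coefficients, where a separate convolution/Plancherel lemma (Lemma \ref{wienerlem}) gives $\|S_N[f]\|_\infty\leq\|f\|_\infty$ and hence $\sum\widehat{f}(n)=S_N[f](0)\leq\|f\|_\infty$. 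You instead keep only the ``per-$\sigma$'' consequence of Theorem \ref{minthm} --- for each permutation, $\sup_N\|S_{\sigma,N}[f]\|_\infty<\infty$ --- and run the contrapositive directly: if $\sum|\widehat{f}(n)|=\infty$, the elementary scalar fact (finite partial sums of a complex series are uniformly bounded iff the series is absolutely summable, with constant $4$) applied pointwise at any fixed $x_0$ lets you greedily choose disjoint finite blocks $G_k$ whose sup-norm contributions overwhelm both the previous partial sum and the at most two padding terms, and the nested odd-cardinality exhaustion $F_k$ then realizes these blocks as the ranges of symmetric intervals under a single permutation $\sigma$, giving $\|S_{\sigma,N_k}[f]\|_\infty\geq k$. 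Your route is shorter and more self-contained --- it bypasses the nonnegative-coefficient lemma, the real/imaginary and positive/negative reductions, and the stronger joint boundedness over $(\sigma,N)$ --- at the cost of not producing the intermediate structural facts about $\W_{S_\infty}$ (closure under coefficient restriction, etc.) that the paper reuses elsewhere. The one thing worth making explicit when writing this up is the standard routine for extracting $G\subseteq\Z\setminus F_0$ from a finite $F$ with $|\sum_{n\in F}a_n|$ large: take $G=F\setminus F_0$ and absorb $\sum_{n\in F_0}|a_n|$ into the threshold, which is finite since $F_0$ is finite.
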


%Recall that $W_{S_{\infty}}$ is the set of $f\in L^{\infty}(\mathbb T^{d})$ so that $\sum_{n\in \Z^{d}}\widehat{f}(n)e_{n}$ is unconditionally convergent in the strong operator topology, where $e_{n}\in B(L^{2}(\mathbb T^{d}))$ is defined by $(e_{n}\xi)(x)=e^{2\pi i n\cdot x}\xi(x)$ for all $\xi\in L^{2}(\mathbb T^{d}),x\in \mathbb T^{d}.$ Given $E\subseteq \Z^{d}$ finite, and $f\in L^{\infty}(\mathbb T^{d}),$ we let $S_{E}(f)=\sum_{n\in E}\widehat{f}(n)e_{n}.$

To prove the theorem above, we will utilize two lemmas.  The first is of independent interest.

%\begin{lemma}\label{wienerlem}
%Let $f\in L_\infty$ with $f(n)\geq 0$. If for some $\sigma\in S_\infty$, $S_{\sigma,N}[f]\to f$ in SOT then $f\in A(\TT)$.
%\end{lemma}
%\begin{proof}
%As noted before, $S_{\sigma,N}[f]\to f$ in SOT is equivalent to $C=\sup_N\|S_{\sigma,N}[f]\|_\infty<\infty$. Then, for every $N\geq 0$,
%\[\sum_{-N}^N\widehat{f}(n)=S_{\sigma,N}[f](0)\leq\|S_{\sigma,N}[f]\|_\infty<\infty,\]
%which implies that $f\in A(\TT)$.
%\end{proof}

\begin{lemma}\label{wienerlem}
For $f\in L_\infty(\TT)$, if $\widehat f(n)\geq 0$ for all $n\in\Z$, then $f\in A(\TT)$.
\end{lemma}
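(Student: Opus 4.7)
The plan is to extract $\sum_n \widehat{f}(n)$ as a pointwise evaluation of a trigonometric polynomial that is uniformly bounded by $\|f\|_\infty$, then exploit non-negativity of the coefficients to upgrade this to absolute summability.

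Concretely, let $F_N$ denote the Fej\'er kernel and set $\sigma_N[f]:=f*F_N$. Since $F_N\geq 0$ and $\int_\TT F_N=1$, Young's inequality gives
\[\|\sigma_N[f]\|_\infty\leq \|F_N\|_1\|f\|_\infty=\|f\|_\infty.\]
On the other hand, $\sigma_N[f]$ is a trigonometric polynomial (hence continuous), with
\[\sigma_N[f](t)=\sum_{|n|\leq N}\left(1-\frac{|n|}{N+1}\right)\widehat{f}(n)e^{2\pi int},\]
so evaluating at $t=0$ and using non-negativity yields
\[0\leq \sum_{|n|\leq N}\left(1-\frac{|n|}{N+1}\right)\widehat{f}(n)=\sigma_N[f](0)\leq \|\sigma_N[f]\|_\infty\leq \|f\|_\infty.\]

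The terms on the left increase to $\widehat{f}(n)$ as $N\to\infty$ for each fixed $n$ (with all summands non-negative), so Fatou's lemma (or the monotone convergence theorem on counting measure) gives
\[\sum_{n\in\Z}|\widehat{f}(n)|=\sum_{n\in\Z}\widehat{f}(n)\leq \liminf_{N\to\infty}\sum_{|n|\leq N}\left(1-\frac{|n|}{N+1}\right)\widehat{f}(n)\leq \|f\|_\infty<\infty,\]
which is exactly the statement $f\in A(\TT)$. There is no real obstacle here: the only subtle point is that $f$ is only in $L_\infty$, not continuous, so one cannot directly evaluate $f$ at $0$; the Fej\'er regularization circumvents this because the convolved function $\sigma_N[f]$ is a continuous trigonometric polynomial whose supremum norm is still controlled by $\|f\|_\infty$.
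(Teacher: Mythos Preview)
Your proof is correct and fully rigorous; the Fej\'er regularization handles the $L_\infty$-vs-continuous issue cleanly, and the passage to the limit via Fatou (equivalently monotone convergence on $\Z$) is sound since all terms are non-negative.

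The paper's argument is genuinely different, however. Rather than using the Fej\'er means, it works directly with the Dirichlet partial sums $S_N[f]$ and proves the stronger intermediate fact that $\|S_N[f]\|_\infty\leq \|f\|_\infty$ for every $N$. Since the Dirichlet kernel is not non-negative, this cannot come from Young's inequality; instead the paper interprets $\|S_N[f]\|_\infty$ as the operator norm of $M_{S_N[f]}$ on $L_2$, and for each $\xi\in L_2$ compares $\widehat{S_N[f]\xi}=(\mathbbm{1}_{\{-N,\dots,N\}}\widehat{f})*\widehat{\xi}$ pointwise to $\widehat{f}*|\widehat{\xi}|=\widehat{f\eta}$, where $\widehat{\eta}=|\widehat{\xi}|$. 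Plancherel then yields $\|S_N[f]\xi\|_2\leq \|f\|_\infty\|\xi\|_2$, and evaluation at $0$ finishes as in your proof, but now without any limiting procedure. So: your route is more elementary and entirely classical, needing nothing beyond Fej\'er positivity; the paper's route is more in the operator-theoretic spirit of the article and yields the additional information that the ordinary partial sums of such an $f$ are themselves uniformly bounded in $L_\infty$, which your argument does not directly give.
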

\begin{proof} Fix an $N\in\N$, and let $E:=\{-N,\dots,N\}$.  Fix a $\xi\in L_{2},$ and let $\eta\in L_{2}$ be such that $\widehat{\eta}(n)=|\widehat{\xi}(n)|$ for all $n\in \Z.$ Observe that
\[|\widehat{S_{N}[f]\xi}(n)|=|(\mathbbm{1}_{E}\widehat{f})*\widehat{\xi}(n)|\leq \widehat{f}*|\widehat{\xi}|(n)\leq \widehat{f}*\widehat{\eta}(n)=\widehat{f\eta}(n).\]
Hence by Plancherel's Theorem, $\|S_{N}[f]\xi\|_{2}\leq \|f\eta\|_{2}\leq \|f\|_{\infty}\|\eta\|_{2}=\|f\|_{\infty}\|\xi\|_{2}.$ Since $\xi$ was arbitrary, we have that $\|S_{N}[f]\|_\infty\leq \|f\|_{\infty}.$

Then also
\[\sum_{-N}^N\widehat{f}(n)=S_{N}[f](0)\leq\|S_{N}[f]\|_\infty\leq \|f\|_\infty<\infty,\]
which implies that $f\in A(\TT)$.
\end{proof}

\begin{lemma}\label{L:basic facts}
\begin{enumerate}[(i)]
\item   $f\in \W_{S_{\infty}}$ if and only if $\sup_{\sigma\in S_\infty, N\in\N}\|S_{\sigma,N}[f]\|_\infty<\infty.$  \label{I:going to the PUB}
\item For every $f\in \W_{S_{\infty}}$ and $J\subset \Z$ (finite or not), the function $g$ given by $\widehat{g}(n)=\mathbbm{1}_{J}(n)\widehat{f}(n)$ is in $\W_{S_{\infty}}.$    \label{I:back at the PUB}
%\item Let $f,g\in L_{\infty}.$  Suppose that  $\widehat{g}(n)\geq 0$, for every $n\in \Z,$ and that $|\widehat{f}(n)|\leq \widehat{g}(n)$ for every $n\in \Z.$ Then $f\in \W_{S_{\infty}}$.  \label{I:subordination}
\end{enumerate}

\end{lemma}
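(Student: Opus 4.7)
The plan is as follows. For (i), the ``if'' direction is a direct consequence of Theorem \ref{minthm}: assume $C := \sup_{\sigma, N}\|S_{\sigma,N}[f]\|_\infty < \infty$. Then for any fixed $\sigma$, $\|M_{S_{\sigma,N}[f]}\|=\|S_{\sigma,N}[f]\|_\infty$ is bounded by $C$, and on each exponential $e_m$ one has
\[
\|S_{\sigma,N}[f]e_m - fe_m\|_2^2 = \sum_{|n|>N}|\widehat{f}(\sigma(n))|^2 \longrightarrow 0
\]
by Parseval applied to $f\in L_2$. Since $\{e_m\}_{m\in\Z}$ has dense span in $L_2$, Theorem \ref{minthm} gives $M_{S_{\sigma,N}[f]}\to M_f$ in SOT, so $f\in \W_{S_\infty}$.

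For the ``only if'' direction, fix $\sigma$; the SOT convergence $M_{S_{\sigma,N}[f]}g\to M_f g$ for every $g\in L_2$ together with Banach--Steinhaus yields $\sup_N\|S_{\sigma,N}[f]\|_\infty<\infty$ for each individual $\sigma$. To upgrade the bound to one uniform over $\sigma$, I argue by contradiction via a gliding hump. Writing $S_B[f]:=\sum_{n\in B}\widehat{f}(n)e_n$ for finite $B\subseteq \Z$, failure of the uniform bound produces permutations $\sigma_k$ and integers $N_k$ with $\|S_{A_k}[f]\|_\infty\to\infty$, where $A_k:=\sigma_k(\{-N_k,\ldots,N_k\})$. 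I build inductively a nested chain $B_1\subsetneq B_2\subsetneq\cdots$ of finite subsets of $\Z$ of odd cardinality, exhausting $\Z$, such that $\|S_{B_j}[f]\|_\infty\to\infty$: given $B_j$, choose $k_{j+1}$ so that $\|S_{A_{k_{j+1}}}[f]\|_\infty > j+1 + \sum_{n\in B_j}|\widehat{f}(n)| + 2\|f\|_\infty$, then let $B_{j+1}=B_j\cup A_{k_{j+1}}$ adjusted by at most two additional integers to enforce odd cardinality and to eventually cover $\Z$. The key estimate
\[
\|S_{B_{j+1}}[f]\|_\infty \geq \|S_{A_{k_{j+1}}}[f]\|_\infty - \sum_{n\in B_j\setminus A_{k_{j+1}}}|\widehat{f}(n)| - 2\|f\|_\infty \geq j+1
\]
propagates the blow-up. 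With $M_j:=(|B_j|-1)/2$, the nesting of the $B_j$'s (and matching parity with $\{-M_j,\ldots,M_j\}$) allows me to define a single $\sigma\in S_\infty$ with $\sigma(\{-M_j,\ldots,M_j\})=B_j$. Then $\|M_{S_{\sigma,M_j}[f]}\|\to\infty$, so Banach--Steinhaus provides $g\in L_2$ with $\sup_N\|S_{\sigma,N}[f]g\|_2=\infty$, contradicting $f\in \W_{S_\infty}$.

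For (ii), I apply the characterization from (i). For any bijection $\tau\colon\Z\to\Z$ and $M\in\N$,
\[
S_{\tau,M}[g] = \sum_{\substack{|n|\leq M\\ \tau(n)\in J}}\widehat{f}(\tau(n))e_{\tau(n)} = S_B[f],\qquad B:=\tau(\{-M,\ldots,M\})\cap J,
\]
which is finite. If $|B|$ is odd, then $B=\sigma'(\{-M',\ldots,M'\})$ for some $\sigma'\in S_\infty$, so $\|S_B[f]\|_\infty\leq \sup_{\sigma,N}\|S_{\sigma,N}[f]\|_\infty$, which is finite by part (i) applied to $f$. If $|B|$ is even, pick $m\notin B$ and write $S_B[f]=S_{B\cup\{m\}}[f]-\widehat{f}(m)e_m$, giving
\[
\|S_{\tau,M}[g]\|_\infty \leq \sup_{\sigma,N}\|S_{\sigma,N}[f]\|_\infty + \|f\|_\infty.
\]
This bound is uniform in $\tau$ and $M$, so the other direction of (i) yields $g\in \W_{S_\infty}$.

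The main obstacle is the gliding hump construction in the forward direction of (i). Arranging the $B_j$'s to simultaneously be nested, have odd cardinality, exhaust $\Z$, and still carry unbounded $L_\infty$ norm under Fourier truncation requires reconciling three constraints at once; the bookkeeping of the absorbed residual $\sum_{n\in B_j\setminus A_{k_{j+1}}}|\widehat{f}(n)|$ is what makes the estimate work. Once this is established, part (ii) reduces to a clean parity fix plus two applications of (i).
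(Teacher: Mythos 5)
Your proof is correct, and parts (i)-``if'' and (ii) match the paper's approach in spirit (your parity fix in (ii), splitting $S_B[f]=S_{B\cup\{m\}}[f]-\widehat{f}(m)e_m$ when $|B|$ is even, is in fact a bit more careful than the paper's terse claim that $S_{\sigma,N}[g]=S_{\sigma',N}[f]$ for the \emph{same} $N$, which as stated can fail when $|\sigma(\{-N,\dots,N\})\cap J|<2N+1$). The genuine divergence is in the ``only if'' direction of (i). The paper applies the principle of uniform boundedness once, to the whole two-parameter family $\{M_{S_{\sigma,N}[f]}\}_{\sigma,N}$: for each fixed $\xi\in L_2$, unconditional convergence of $\sum_n \widehat{f}(n)e_n\xi$ implies, via the standard Banach-space fact that an unconditionally convergent series has its finite partial sums bounded, that $\sup_{\sigma,N}\|S_{\sigma,N}[f]\xi\|_2<\infty$; PUB then gives the uniform $L_\infty$ bound directly. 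You instead apply Banach--Steinhaus only per fixed $\sigma$, obtaining the weaker fact $\sup_N\|S_{\sigma,N}[f]\|_\infty<\infty$ for each $\sigma$ separately, and then upgrade to uniformity in $\sigma$ by a gliding-hump construction that splices the unbounded finite-set partial sums $S_{A_k}[f]$ into a single nested exhaustion realized by one permutation, which would contradict the per-$\sigma$ bound. Both arguments are sound; the paper's route is shorter because it front-loads the relevant Banach-space fact, whereas your route is more elementary and self-contained — the gliding hump essentially re-derives that fact from scratch in this specific setting, at the cost of the nesting/parity/exhaustion bookkeeping you correctly flag as the delicate point.
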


\begin{proof}

(\ref{I:going to the PUB}): First suppose that $f\in \W_{S_{\infty}}.$ Fix a $\xi\in L_{2}.$ By the principle of uniform boundedness, it is enough to show that $\sup_{\sigma,N}\|S_{\sigma,N}[f]\xi\|_{2}<\infty.$  Since $f\in\W_{S_\infty}$, $\sum_{n\in \Z}c_ne_{n}\xi$ converges unconditionally in $L_{2},$ so it follows from standard Banach space theory that indeed
\[\sup_{\sigma\in S_\infty, N\in\N}\|S_{\sigma,N}[f]\xi\|_{2}<\infty.\]

Conversely, suppose that $\sup_{\sigma,N}\|S_{\sigma,N}[f]\|_\infty=C<\infty.$ Let $\sigma\colon \Z\to \Z$ be any bijection. Then for every $N\in\N$, $\|S_{\sigma,N}[f]\|_\infty\leq C,$ hence by the same argument as in the proof of Theorem \ref{THMEquivalences}, $M_{S_{\sigma,N}f}\to M_f$ in the strong operator topology, which implies that $f\in\W_{S_\infty}$.

%Then for every $N\in \N,$
%\begin{equation}\label{E:uniform estimate on partial sums}
%\left\|S_{\sigma,N}[f]\right\|=\|S_{\sigma(\{-N,\dots,N\})}[f]\|\leq C.
%\end{equation}
%Additionally, since $\sum_{j=1}^{N}\widehat{f}(\phi(j))(x)e^{2\pi in\cdot x}$ converges in the $L_{2}$-norm to $f,$ it is easy to see that $\lim_{N\to\infty}\sum_{j=1}^{N}\widehat{f}(\phi(j))e_{\phi(j)}\xi=f\xi$ for all $\xi\in L_{\infty}.$ Density of $L_{\infty}$ and (\ref{E:uniform estimate on partial sums}) proves that $\lim_{N\to\infty}\sum_{j=1}^{N}\widehat{f}(\phi(j))e_{\phi(j)}=M_{f}$ in the strong operator topology.

(\ref{I:back at the PUB}): Let $C=\sup_{\sigma,N}\|S_{\sigma,N}[f]\|<\infty.$  Let $g\in L_2$ be the function given by $\widehat{g}(n)=\mathbbm{1}_{J}(n)\widehat{f}(n).$  Note that for every $\sigma\in S_\infty$ and $N\in\N$, there exists a $\sigma'\in S_\infty$ such that $S_{\sigma,N}[g] = S_{\sigma',N}[f]$, whence
$\|S_{\sigma,N}[g]\|_\infty \leq C$.  Since $\sigma$ and $N$ are arbitrary, the conclusion follows.
%(\ref{I:subordination}): Fix a $\sigma\in S_\infty$ and $N\in\N$, and let $E:=\sigma(\{-N,\dots,N\})$.  Fix a $\xi\in L_{2},$ and let $\eta\in L_{2}$ be such that $\widehat{\eta}(n)=|\widehat{\xi}(n)|$ for all $n\in \Z.$ Observe that
%\[|\widehat{S_{\sigma,N}[f]\xi}(n)|=|(\mathbbm{1}_{E}\widehat{f})*\widehat{\xi}(n)|\leq |\widehat{f}|*|\widehat{\xi}|(n)\leq \widehat{g}*\widehat{\eta}(n)=\widehat{g\eta}(n).\]
%Hence by Plancherel's Theorem, $\|S_{\sigma,N}[f]\xi\|_{2}\leq \|g\eta\|_{2}\leq \|g\|_{\infty}\|\eta\|_{2}=\|g\|_{\infty}\|\xi\|_{2}.$ Since $\xi$ was arbitrary, we have that $\|S_{\sigma,N}[f]\|_\infty\leq \|g\|_{\infty}.$ Hence by (\ref{I:going to the PUB}), $f\in\W_{S_\infty}$.  The fact that $\|f\|_{\infty}\leq \|g\|_{\infty}$ follows because $M_{f}$ is the strong operator topology limit of $S_{\sigma,N}[f]$ for $\sigma\in S_\infty$. %(\ref{E:uniform estimate on partial sums}), we know that $f\in \W_{S_{\infty}}.$ The fact that $\|f\|_{\infty}\leq \|g\|_{\infty}$ follows because $M_{f}$ is the strong operator topology limit of $S_{E_{n}}[f]$ whenever $E_{n}$ is an  increasing sequence of finite subsets of $\Z$ with $\Z=\bigcup_{n}E_{n}.$
\end{proof}

\begin{corollary}\label{C:getting  absolute values}
Given $f\in \W_{S_{\infty}}$ with $\widehat f$ real, define $g\in L_{2}$ by $\widehat{g}=|\widehat{f}|.$ Then $g\in \W_{S_{\infty}}.$
\end{corollary}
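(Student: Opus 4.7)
The plan is to reduce the corollary to Lemma \ref{wienerlem} by splitting $f$ along the sign of its Fourier coefficients. Each piece will have Fourier coefficients of a single sign (possibly after multiplying by $-1$), so the previous lemma will force that piece into the Wiener algebra $A(\TT)$; the function $g$ will then be a linear combination of those pieces and hence itself in $A(\TT) \subseteq \W_{S_\infty}$.

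Concretely, I would set $J_+ = \{n \in \Z : \widehat f(n) \geq 0\}$ and $J_- = \Z \setminus J_+$, and define $f_\pm \in L_2$ by $\widehat{f_\pm}(n) = \widehat f(n)\mathbbm{1}_{J_\pm}(n)$. Lemma \ref{L:basic facts}(\ref{I:back at the PUB}) gives that $f_\pm \in \W_{S_\infty}$, and since $\W_{S_\infty}$ is by definition a subspace of $L_\infty(\TT)$, each $f_\pm$ is bounded. Now $f_+$ has nonnegative Fourier coefficients everywhere, and so does $-f_-$, so two applications of Lemma \ref{wienerlem} yield $f_+ \in A(\TT)$ and $-f_- \in A(\TT)$, whence $f_- \in A(\TT)$ as well.

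Finally, for every $n \in \Z$ one has $\widehat g(n) = |\widehat f(n)| = \widehat{f_+}(n) - \widehat{f_-}(n)$, so by uniqueness of Fourier coefficients $g = f_+ - f_-$, which lies in $A(\TT)$ since $A(\TT)$ is a linear subspace. The inclusion $A(\TT) \subseteq \W_{S_\infty}$ recorded at the opening of Section \ref{SECW} then yields $g \in \W_{S_\infty}$. There is no serious obstacle in this plan: the only slightly subtle point is verifying the $L_\infty$ hypothesis needed in Lemma \ref{wienerlem}, which is immediate once one recalls that $\W_{S_\infty} \subseteq L_\infty(\TT)$.
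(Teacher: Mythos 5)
Your argument is correct and uses the same decomposition as the paper's: split $\Z$ into $J_{\pm}$ according to the sign of $\widehat f$, apply Lemma \ref{L:basic facts}(\ref{I:back at the PUB}) to place $f_{\pm}$ in $\W_{S_\infty}$, and express $g = f_+ - f_-$. Where you diverge is only in the concluding step: the paper simply invokes that $\W_{S_\infty}$ is a linear subspace of $C(\TT)$ (noted in Section \ref{SECFramework}), so $g = f_+ - f_- \in \W_{S_\infty}$ immediately, whereas you take the extra detour of applying Lemma \ref{wienerlem} to $f_+$ and $-f_-$ to conclude $g \in A(\TT) \subseteq \W_{S_\infty}$. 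Your route proves a slightly stronger fact at this stage ($g\in A(\TT)$ rather than just $g\in\W_{S_\infty}$), but this is precisely the step that the paper defers to the proof of Theorem \ref{equivWinf}, where Lemma \ref{wienerlem} is applied to $g$ itself; so the two proofs apply the same ingredients in a slightly different order. Either way is valid, though the paper's version keeps the corollary minimal and leaves the Wiener-algebra conclusion to the main theorem.
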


\begin{proof}
% There are $f_{R},f_{I}\in L_{\infty}$ with $\widehat{f_{R}}=\textnormal{Re}(\widehat{f}),\quad \widehat{f_{I}}=\textnormal{Im}(\widehat{f}).$ In fact,
%\begin{equation}\label{E:how you get it}
% f_{R}(x)=\frac{f(x)+\overline{f(-x)}}{2},f_{I}(x)=\frac{f(x)-\overline{f(-x)}}{2i}.
% \end{equation}
% Note that  $f\in \W_{S_{\infty}}$ implies that $f_R,f_I \in \W_{S_{\infty}}$. Indeed, fix $\sigma\in S_\infty$ and $N\in\N$ finite. A similar equation as (\ref{E:how you get it}) relates the partial sums of $f_{R},f_{I}$ rearranged according to $\sigma$ to the corresponding rearranged partial sum of $f$. From this, it is easy to see that:
% \[\max(\|S_{\sigma,N}[f_{R}]\|,\|S_{\sigma,N}[f_{I}]\|)\leq \|S_{\sigma,N}[f]\|.\]
% Thus without loss of generality we can assume that $\widehat{f}$ is real.

Let
\[J_{+}=\{n\in \Z:\widehat{f}(n)\geq 0\}, \quad J_{-}=J_+^{c}.\]
By Lemma \ref{L:basic facts}, $f_{+},f_{-}\in \W_{S_{\infty}}$ where
\[\widehat{f_{+}}=\mathbbm{1}_{J_{+}}\widehat{f},\,\,\, \widehat{f_{-}}=\mathbbm{1}_{J_{-}}\widehat{f}.\]
Since $g=f_-+f_+$ so we get $g\in \W_{S_{\infty}}.$
\end{proof}

\begin{proof}[Proof of Theorem \ref{equivWinf}]
We have already noted that $A(\TT)\subset\W_{S_\infty}$.  To see the reverse inclusion, let $f\in \W_{S_\infty}$.  There are $f_{R},f_{I}\in L_{\infty}$ with $\widehat{f_{R}}=\textnormal{Re}(\widehat{f}),\quad \widehat{f_{I}}=\textnormal{Im}(\widehat{f}).$ In fact,
\begin{equation}\label{E:how you get it}
f_{R}(x)=\frac{f(x)+\overline{f(-x)}}{2},f_{I}(x)=\frac{f(x)-\overline{f(-x)}}{2i}.
\end{equation}
Note that  $f\in \W_{S_{\infty}}$ implies that $f_R,f_I \in \W_{S_{\infty}}$. Indeed, fix $\sigma\in S_\infty$ and $N\in\N$ finite. A similar equation as (\ref{E:how you get it}) relates the partial sums of $f_{R},f_{I}$ rearranged according to $\sigma$ to the corresponding rearranged partial sum of $f$. From this, it is easy to see that:
\[\max(\|S_{\sigma,N}[f_{R}]\|,\|S_{\sigma,N}[f_{I}]\|)\leq \|S_{\sigma,N}[f]\|.\]
Thus without loss of generality we can assume that $\widehat{f}$ is real.

Then  by Corollary \ref{C:getting  absolute values}, for $g$ given by $\widehat{g}=|\widehat{f}|$, we have $g\in L_\infty$, and from Lemma \ref{wienerlem}, $g\in A(\TT)$. Note that $\|f\|_{A(\TT)}=\|g\|_{A(\TT)}$, thus $f\in A(\TT)$.
%It is easy to see that $A(\mathbb T)\subseteq \W_{S_{\infty}}.$ For the reverse inclusion, fix $f\in \W_{S_{\infty}}.$ By Corollary \ref{C:getting  absolute values}, the function $g$ given by $\widehat{g}=|\widehat{f}|$ is in $\W_{S_\infty}$. For any $\sigma\in S_\infty$ and $N\in\N$, we then have

\end{proof}

\section{The WOT Case}\label{SECWOT}

It is natural to consider the analogues of the subspaces $\W_{S_\infty}$ and $\V_{S_\infty}$ when asking only for WOT convergence rather than SOT convergence.  Our remarks here will remain brief, but beginning with the first case, consider $\widetilde{\W}_{S_\infty}$ to be the largest subspace of $C(\TT)$ such that for every $f\in \widetilde{\W}_{S_\infty}$, every $\sigma\in S_\infty$, and every $g,h\in L_2$, we have
\[ \bracket{M_{S_\sigma,N}[f]g,h}\to\bracket{M_fg,h}.\]
That is, $\widetilde{\W}_{S_\infty}$ is the largest subspace of $C(\TT)$ (or equivalently $L_\infty$ as a result of the proof) for which Fourier series converge unconditionally in the WOT on $\mathcal{B}(L_2)$.  Then the following holds.

\begin{proposition}
$\widetilde{\W}_{S_\infty} = \W_{S_\infty} = A(\TT)$.
\end{proposition}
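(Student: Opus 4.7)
One inclusion is essentially free: since SOT convergence of the multiplication operators implies WOT convergence, we immediately get $\W_{S_\infty} \subseteq \widetilde{\W}_{S_\infty}$, which combined with Theorem \ref{equivWinf} yields $A(\TT) \subseteq \widetilde{\W}_{S_\infty}$. So the real content is the reverse inclusion $\widetilde{\W}_{S_\infty} \subseteq A(\TT)$.

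My strategy is to upgrade weak unconditional convergence of partial sums to norm unconditional convergence, and then invoke the analysis already performed for $\W_{S_\infty}$. Concretely, fix $f \in \widetilde{\W}_{S_\infty}$ and $g \in L_2(\TT)$, and consider
\[ \sum_{n \in \Z} \widehat{f}(n)\, e_n g \]
as a series in the Banach space $L_2(\TT)$. By the definition of $\widetilde{\W}_{S_\infty}$, applied with test functions $h$ ranging over all of $L_2(\TT)$, every rearrangement of this series converges \emph{weakly} in $L_2(\TT)$ to $fg$. This is precisely the hypothesis of the Orlicz--Pettis theorem, which asserts that in any Banach space, a series for which every rearrangement (equivalently, every subseries) converges weakly must converge unconditionally in norm.

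Once Orlicz--Pettis is applied for each fixed $g \in L_2(\TT)$, we will have that $M_{S_{\sigma,N}[f]} g \to M_{f}g$ in $L_2$-norm for every $\sigma \in S_\infty$; varying $g$ then gives SOT convergence of the multiplication operators for every rearrangement, i.e. $f \in \W_{S_\infty}$. Theorem \ref{equivWinf} then supplies $f \in A(\TT)$, completing the argument. I do not foresee a significant obstacle: the only conceptual point is recognizing that the WOT hypothesis, when read one $g$ at a time, is exactly the weak-unconditional-convergence hypothesis of Orlicz--Pettis in $L_2(\TT)$, which reduces the WOT statement to the SOT one already handled in Section \ref{SECW}.
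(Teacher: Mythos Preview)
Your argument is correct, and it takes a genuinely different route from the paper's own proof. The paper does not invoke Orlicz--Pettis; instead, for fixed $f\in\widetilde{\W}_{S_\infty}$ and $g\in L_2$ it considers the functionals $L_{E,g}(h)=\langle h,S_E[f]g\rangle$ indexed by finite sets $E\subset\Z$, observes that the WOT hypothesis gives pointwise boundedness $\sup_E|L_{E,g}(h)|<\infty$ for each $h$, and applies the uniform boundedness principle to get $\sup_E\|S_E[f]g\|_2<\infty$. A second application of uniform boundedness (now over $g\in L_2$) then gives $\sup_E\|S_E[f]\|_\infty<\infty$, which by Lemma~\ref{L:basic facts}(\ref{I:going to the PUB}) places $f$ in $\W_{S_\infty}=A(\TT)$.

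Your approach is cleaner in that it packages the first step as a single citation of Orlicz--Pettis and yields norm convergence (not just norm boundedness) directly, landing you in $\W_{S_\infty}$ by definition rather than via the boundedness criterion. The paper's approach, on the other hand, is more self-contained and makes explicit the two-layer uniform boundedness structure that is really driving the result; it also shows a little more, namely the uniform bound $\sup_E\|S_E[f]\|_\infty<\infty$, before appealing to the earlier characterization. In a Hilbert space setting the two arguments are close cousins, since the standard proof of Orlicz--Pettis in reflexive spaces essentially \emph{is} a uniform boundedness argument, but as written they invoke different named tools.
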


\begin{proof}
Fix $f\in \widetilde{\W}_{S_\infty}.$  For $g\in L_{2}$ and a finite $E\subset \Z,$ we define a linear functional:
$L_{E,g}\colon L_{2}\to \mathbb{C}$
by
\[L_{E,g}(h)=\bracket{h,S_{E}[f]g}.\]

Fix a $g\in L_{2}.$ Since the Fourier series of $f$ converges unconditionally in the weak operator topology, we have for each $h\in L_{2}$ that:
\[\sup_{E}|L_{E,g}(h)|=\sup_{E}|\bracket{S_{E}(f)g,h}|<\infty,\]
where the supremum is over all finite subsets of $\Z^d.$
It follows from the principle of uniform boundedness that there is a $C_{g}\geq 0$ so that:
\[\|L_{E,g}\|\leq C_{g}.\]

However, since Hilbert spaces can be identified (in a conjugate linear, isometric way) with their own dual, we have that
\[\|L_{E,g}\|=\|S_{E}(f)g\|.\]
Therefore, for each $g\in L_{2},$ we have that
\[\sup_{E}\|S_{E}(f)g\|_{2}<\infty.\]
Thus another application of the principle of uniform boundedness shows that $\sup_{E}\|S_{E}(f)\|_{\infty}<\infty.$ Hence we have that $f\in \W_{S_{\infty}},$ which is $A(\TT^d)$ by Theorem  \ref{THMWd}.
\end{proof}

The analogue of $\V_{S_\infty}$ requires a bit more consideration. Given a subspace $E\subseteq L_2(\TT),$ and a collection of operators $T_n$ on $L_2(\TT),$ we say that \emph{$\sum_{n}T_{n}$ converges unconditionally in the weak operator topology on $E$} if for every $v,w\in E$ we have that $\sum_{n}\la {T_{n}v,w\ra}$ converges unconditionally. This is a mild abuse of terminology, since the $T_n$ are not assumed to leave $E$ invariant, but that is not important for our purposes. It follows from Zorn's Lemma that there exists a maximal subspace of $L_2$ for which the Fourier series of every continuous function converges unconditionally in the weak operator topology on that subspace.  However, it is not clear that this subspace is unique. For example, if every Fourier series converges unconditionally in the weak operator topology on $E,F,$ there is, a priori, no reason to expect that every Fourier series converges unconditionally in the weak operator topology on $E+F.$ We will, in fact, show that there is a unique maximal subspace of $L_2$ so that the Fourier series of every continuous function converges unconditionally in the weak operator topology on that subspace, and that this subspace is $L_4(\TT),$ but the reader should note that even uniqueness is certainly not clear.

 To fix this issue, we define a subspace of $L_{1}.$ We let $\widetilde{\V}$ be the set of functions $g\in L_{1}(\TT),$ so that for every $f\in L_\infty(\TT)$ we have that  $\sum_{n\in \Z}\widehat{f}(n)\la e_{n}, g\ra=\sum_{n\in \Z}\widehat{f}(n)\overline{\widehat{g}(n)}$ converges unconditionally. Given a subspace $E$ of $L_2,$ we have that the Fourier series converges unconditionally in the weak operator topology restricted to $E$ if and only if $g=\overline{h}k\in \widetilde{\V}$ for all $h,k\in E.$ We abuse notation here, to mean $\la e_nh,k\ra=\la e_n,\bar hk\ra=\la e_n, g\ra$ and now $g\in L_1$.  Hence $\widetilde{\V}$ is a substitution for a unique maximal subspace on which the Fourier series converges unconditionally in the weak operator topology.

% To fix this issue, we define a subspace of $L_{1}.$ We let $V$ be the set of functions $g\in L_{1}(\TT),$ so that for every $f\in C(\TT)$ we have that $\sum_{n\in \Z}\widehat{f}(n)(e_{n},g)$ converges unconditionally (here for a functions $f\in L_{\infty}(\TT),g\in L_{1}(\TT)$ we let $(f,g)=\int_{0}^{1}f(x)g(x)\,dx$). Given a subspace $E$ of $L_2,$ we have that the Fourier series converges unconditionally in the weak operator topology restricted to $E$ if and only if $h\overline{k}\in V$ for all $h,k\in E.$ Hence $V$ is a substitution for a unique maximal subspace on which the Fourier series converges unconditionally in the weak operator topology.

\begin{lemma}\label{L:basic properties of V}
We have the following properties of $\widetilde{\V}.$
\begin{enumerate}[(i)]
\item  A function $g\in L_{1}$ is in $\widetilde{\V}$ if and only if $f*g\in A(\TT)$ for every  $f\in L_\infty(\TT).$\label{I:convolution for V in WOT}
\item If $g\in \widetilde{\V},$ then the map $C_{g}\colon L_\infty(\TT)\to A(\TT)$ given by $C_{g}(f)=f*g$ is a well-defined bounded operator, i.e. $g$ is a bounded Fourier multiplier from $L_\infty$ to $A(\TT)$. \label{I:bounded operator for V in WOT}
\item A function $g\in L_{1}$ is in $\widetilde{\V}$ if and only if only  $\sup_{E}|\la S_{E}(f),g\ra|<\infty$ for all $f\in L_\infty(\TT),$ where the supremum is over all finite subsets $E$ of $\Z.$ \label{I:boundedness of inner products L1}
\end{enumerate}
\end{lemma}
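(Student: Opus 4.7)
The plan is to observe that all three conditions in the lemma reduce to the classical scalar-series fact that, for a complex series $\sum_{n}a_n$, unconditional convergence, absolute summability $\sum_n|a_n|<\infty$, and boundedness of partial sums $\sup_{E}|\sum_{n\in E}a_n|<\infty$ over finite $E$ are all equivalent. With this in mind, I would first prove (i) and (iii) directly by unwinding the definition of $\widetilde{\V}$ in terms of Fourier coefficients, and then deduce (ii) from (i) via the closed graph theorem.

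For (i), since $\la e_n,g\ra = \overline{\widehat{g}(n)}$, the series $\sum_{n}\widehat{f}(n)\la e_n,g\ra$ appearing in the definition of $\widetilde{\V}$ is exactly $\sum_n \widehat{f}(n)\overline{\widehat{g}(n)}$. For $f\in L_\infty(\TT)$ and $g\in L_1(\TT)$ the convolution $f*g$ is continuous (by continuity of translation in $L_1$), hence belongs to $L_1$, with Fourier coefficients $\widehat{f*g}(n)=\widehat{f}(n)\widehat{g}(n)$. The scalar equivalence above then identifies unconditional convergence of $\sum_n \widehat{f}(n)\overline{\widehat{g}(n)}$ with $\sum_n|\widehat{f}(n)\widehat{g}(n)|<\infty$, which is exactly the statement that $f*g\in A(\TT)$. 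Quantifying over $f\in L_\infty$ gives (i).

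For (iii), I would simply note that $\la S_{E}(f),g\ra = \sum_{n\in E}\widehat{f}(n)\overline{\widehat{g}(n)}$, so the finiteness of $\sup_{E}|\la S_{E}(f),g\ra|$ is literally the boundedness of the finite partial sums of that same scalar series. The classical equivalence then shows that this boundedness for every $f\in L_\infty$ is the same as unconditional convergence for every $f\in L_\infty$, i.e.\ $g\in\widetilde{\V}$.

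For (ii), the natural tool is the closed graph theorem applied to $C_g\colon L_\infty(\TT)\to A(\TT)$, which is well-defined by (i), between two Banach spaces. Suppose $f_n\to f$ in $L_\infty$ and $C_g(f_n)\to h$ in $A(\TT)$; both convergences imply pointwise convergence of Fourier coefficients (for the second because convergence in $A(\TT)$ is stronger than in $L_\infty$), so passing to the limit in $\widehat{C_g(f_n)}(m)=\widehat{f_n}(m)\widehat{g}(m)$ yields $\widehat{h}(m)=\widehat{f}(m)\widehat{g}(m)=\widehat{C_g(f)}(m)$ for every $m$, and uniqueness of Fourier coefficients gives $h=C_g(f)$. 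I do not anticipate a genuine obstacle in any of the three parts; the only place that requires a moment of care is verifying in (i) that $f*g$ actually lies in $L_1$, which is why recording the continuity of $f*g$ before invoking the scalar equivalence is worthwhile.
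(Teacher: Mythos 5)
Your proof is correct, and the treatment of (i) and (iii) is identical in substance to the paper's: both rest on the elementary equivalence, for scalar series, of unconditional convergence, absolute summability, and boundedness of finite partial sums, applied to $\sum_n \widehat{f}(n)\overline{\widehat{g}(n)}$.

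The only divergence is in part (ii). You apply the closed graph theorem directly to $C_g\colon L_\infty(\TT)\to A(\TT)$, which is legitimate because part (i) already shows $C_g$ is well-defined on all of $L_\infty$, and both $L_\infty$-convergence and $A(\TT)$-convergence imply pointwise convergence of Fourier coefficients. The paper instead first runs the closed graph argument on $C(\TT)$ to get $\|C_g\|_{C(\TT)\to A(\TT)}<\infty$, and then extends the bound to $L_\infty$ by approximating $f\in L_\infty$ with its Fejér means $f_k\in C(\TT)$ (which satisfy $\|f_k\|_\infty\leq\|f\|_\infty$ and $\widehat{f_k}(m)\to\widehat{f}(m)$) and invoking Fatou's lemma on $\|\widehat{f}\widehat{g}\|_1\leq\liminf_k\|\widehat{f_k}\widehat{g}\|_1$. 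Your one-step version is cleaner and buys you a shorter argument; the paper's two-step version has the minor virtue of showing explicitly that the $C(\TT)\to A(\TT)$ and $L_\infty\to A(\TT)$ operator norms coincide, and would still go through even if one only knew a priori that $C_g$ maps $C(\TT)$ into $A(\TT)$. Either is acceptable; there is no gap in your argument. One small remark: in (i) you observe $f*g\in C(\TT)$ to justify $f*g\in L_1$, but the cruder bound $\|f*g\|_1\leq\|f\|_\infty\|g\|_1$ already suffices; the continuity of $f*g$ is true but not needed.
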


% \begin{lemma}\label{L:basic properties of V}
% We have the following properties of $V.$
% \begin{enumerate}[(i)]
% \item  A function $g\in L_{1}$ is in $V$ if and only if $f*\widetilde{g}\in A(\TT)$ for every  $f\in C(\TT),$ where $\widetilde{g}(x)=g(-x).$ \label{I:convolution for V in WOT}
% \item If $g\in V,$ then the map $C_{g}\colon C(\TT)\to A(\TT)$ given by $C_{g}(f)=f*\widetilde{g}$ is a well-defined bounded operator .e. $g$ is a bounded Fourier multiplier from $L_\infty$ to $A(\TT)$}. \label{I:bounded operator for V in WOT}
% \item A function $g\in L_{1}$ is in $V$ if and only if only $\sup_{E}\left|\int_{0}^{1}S_{E}(f)(x)g(x)\,dx\right|<\infty$ for all $f\in C(\TT),$ where the supremum is over all finite subsets $E$ of $\Z.$ \label{I:boundedness of inner products L1}

% \end{enumerate}

% \end{lemma}

\begin{proof}

(\ref{I:convolution for V in WOT}):
Note that the scalar series $\sum_{n}\widehat{f}(n)\overline{\widehat{g}(n)}$  converges unconditionally if and only if
\[\sum_{n}|\widehat{f}(n)||\overline{\widehat{g}(n)}|=\sum_{n}|\widehat{f}(n)||\widehat{g}(n)|=\sum_{n}|(\widehat{f*g})(n)|<\infty,\]
which is equivalent to $f*g\in A(\TT)$.

% First, suppose that $f*\widetilde{g}\in A(\TT)$ for every $f\in L_{\infty}.$ Then
% \[\sum_{n}|\widehat{f}(n)||(e_{n},g)|=\sum_{n}|\widehat{f}(n)||\widehat{\widetilde{g}}(n)|=\sum_{n}|(f*\widetilde{g})^{\widehat{}}(n)|<\infty.\]
% Thus $\sum_{n}\widehat{f}(n)(e_{n},g)$ converges unconditionally for every $f\in C(\TT),$ so $g\in V.$

% Conversely, suppose that $g\in V,$ and fix $f\in C(\TT).$ Then
% \[\sum_{n}\widehat{f}(n)(e_{n},g)=\sum_{n}\widehat{f}(n)\widehat{\widetilde{g}}(n)\]
% is an unconditionally convergent sum. By basic real analysis, we know that
% \[\sum_{n}|\widehat{f}(n)||\widehat{\widetilde{g}}(n)|<\infty.\]
% Thus $f*\widetilde{g}\in A(\TT)$ for every $f\in C(\TT).$

(\ref{I:bounded operator for V in WOT}):
By (\ref{I:convolution for V in WOT}), the linear map $C_{g}$ is well-defined when restricted to $L_\infty(\TT)$. We first show that it is bounded restricted to $L_\infty(\TT)$ by applying the closed graph theorem. Suppose that $f_{n},f$ are in $L_\infty(\TT)$ with $f_{n}\to f$ in norm, and that $h\in A(\TT)$ and $C_{g}(f_{n})\to h$ in $A(\TT).$ We then have for every $m\in \Z$ that
\[\widehat{h}(m)=\lim_{n}\widehat{C_{g}(f_{n})}(m)=\lim_{n}\widehat{f_{n}}(m)\widehat{g}(m)=\widehat{f}(m)\widehat{g}(m).\]
Hence $h=C_{g}(f),$ so $C_{g}$ has a closed graph and thus a bounded operator when restricted to $L_\infty(\TT).$

Now let $f\in L_\infty,$ and fix a sequence $f_{k}\in C(\TT)$ with $\|f_{k}\|\leq \|f\|_{\infty}$ and $\|f_{k}-f\|_{2}\to 0$ (e.g. use the Fej\'{e}r means of $f$). Then, by Fatou's lemma
\[\|\widehat{f}\widehat{g}\|_{1}\leq \liminf_{k\to\infty}\|\widehat{f_{k}}\widehat{g}\|_{1}=\liminf_{k\to\infty}\|C_{g}(f_{k})\|_{A(\TT)}\leq \|C_{g}\|_{C(\TT)\to A(\TT)}\|f\|_{\infty}.\]

% By (\ref{I:convolution for V in WOT}) the linear map $C_{g}$ is well-defined when restricted to $C(\TT)$. We first show that is bounded restricted to $C(\TT).$  To show that it is bounded, we apply the closed graph theorem. Suppose that $f_{n},f$ are in $L_{\infty}$ and that $f_{n}\to f$ in $L_{\infty},$ and that $h\in A(\TT)$ and $C_{g}(f_{n})\to h$ in $A(\TT).$ We then have for every $m\in \Z$ that
% \[\widehat{h}(m)=\lim_{n}\widehat{C_{g}(f_{n})}(m)=\lim_{n}\widehat{f_{n}}(m)\widehat{g}(-m)=\widehat{f}(m)\widehat{g}(-m).\]
% Hence $h=C_{g}(f),$ so $C_{g}$ has a closed graph and thus a bounded operator when restricted to $C(\TT).$

% Now let $f\in L_\infty,$ and fix a sequence $f_{k}\in C(\TT)$ with $\|f_{k}\|\leq \|f\|_{\infty}$ and $\|f_{k}-f\|_{2}\to 0$ (e.g. use the Fejer means of $f$). Then, by Fatou's lemma
% \[\|\widehat{f}\widehat{\widetilde{g}}\|_{1}\leq \liminf_{k\to\infty}\|\widehat{f_{k}}\widehat{\widetilde{g}}\|_{1}=\liminf_{k\to\infty}\|C_{g}(f_{k})\|_{A(\TT)}\leq \|C_{g}\|_{C(\TT)\to A(\TT)}\|f\|_{\infty}.\]

(\ref{I:boundedness of inner products L1}):
For $f\in L_{\infty}(\TT)$
\[\la S_{E}(f), g\ra =\sum_{n\in E}\widehat{f}(n)\overline{\widehat{g}(n)}.\]
Then we use the basic fact from real analysis that
\[\sum_{n\in \Z}|(\widehat{f*g})(n)|=\sum_{n\in \Z}|\widehat{f}(n)\widehat{g}(n)|=\sum_{n\in \Z}|\widehat{f}(n)\overline{\widehat{g}(n)}|<\infty\]
if and only if
\[\sup_{E}|\sum_{n\in E}\widehat{f}(n)\overline{\widehat{g}(n)}|<\infty.\]

% For $f\in L^{\infty}(\TT)$
% \[\left|\int_{0}^{1}S_{E}(f)(x)g(x)\,dx\right|\leq \sum_{n\in E}|\widehat{f}(n)|\widehat{\widetilde{g}}(n)|=\sum_{n\in E}|(f*\widetilde{g})^{\widehat{}}(n)|,\]
% and the right-hand side of the inequality is clearly bounded if $g\in V,$ by part (\ref{I:bounded operator for V in WOT}).

% Conversely, suppose that $\sup_{E}\left|\int_{0}^{1}S_{E}(f)(x)g(x)\,dx\right|<\infty$  for all $f\in L^{\infty}(\TT).$   Then for every $f\in  L_{\infty}$ we have that
% \[C=\sup_{E}\left|\int_{0}^{1}S_{E}(f)(x)g(x)\,dx\right|<\infty.\]
% For every $E\subseteq  \Z$ finite, we have that
% \[\left|\int_{0}^{1}S_{E}(f)(x)g(x)\,dx\right|=\left|\sum_{n\in E}\widehat{f}(n)\widehat{\widetilde{g}}{n}\right|.\]
% So
% \[\sup_{E}\left|\sum_{n\in E}\widehat{f}(n)\widehat{\widetilde{g}}(n)\right|<\infty.\]
% By basic real analysis, this in fact implies that
% \[\sum_{n\in \Z}|\widehat{f}(n)\widehat{\widetilde{g}}(n)|<\infty.\]
% So $f*\widetilde{g}\in A(\TT).$

\end{proof}

We now proceed to show that $\widetilde{\V}=L_{2}.$ For a locally compact Hausdorff space $X,$ we use $\Prob(X)$ for the space of all Radon probability measures on $X.$ We use $M(X)$ for the Banach space of all Radon measures on $X.$ If $X$ is discrete, we identify $M(X)$ with $\ell_{1}(X)$ in the natural way.

Before proving that $\widetilde{\V}=L_{2}(\TT),$ it may be helpful to explain some of the ideas behind the proof. By the above, every $g\in \widetilde{\V}$ gives rise to a bounded operator $T\colon L_{\infty}(\TT)\to \ell_{1}(\Z)$ given by $T(f)=\widehat{f}\widehat{g} $. If we instead knew that we had a bounded $L_2(\TT)\to \ell_{1}(\Z)$ operator given by $f\mapsto\widehat{f}\widehat{g},$ then we would automatically know that $\widehat{g}\in \ell_2(\Z).$ This is because $\ell_2(\Z)\widehat{g}\subseteq \ell_1(\Z),$ and it is well known that this implies that $\widehat{g}\in \ell_{2}(Z)$ and thus $g\in L_2(\TT)$ by Parseval's theorem. So the goal now is to try to ``upgrade'' our given operator $L_{\infty}(\TT)\to \ell_{1}(\Z)$ to an operator $L_2(\TT)\to \ell_{1}(\Z).$

In fact, we can rephrase the logic of the above paragraph by saying that if $T$ factors through the inclusion $\iota\colon L_{\infty}(\TT)\to L_{2}(\TT),$ i.e. $T=S\circ \iota$ for some bounded operator $S\colon L_{2}(\TT)\to \ell_{1}(\Z),$ then $g\in L_2(\TT).$ So one approach would be to try to prove that $T$ factors through this inclusion map. We could not prove this directly, but we can apply the Grothendieck theorem (see \cite[Theorem 5.5]{PisierFact}) to show a slightly weaker result: there is an $\eta\in \Prob(\Z)$ so that $T\big|_{C(\TT)}$ factors through $j_{\mu}\colon \ell_{2}(\Z,\eta)\to \ell_1(\Z)$ given by $j_{\mu}(f)=f\eta.$ This ends up being sufficient to prove that $\widetilde{\V}=L_2(\TT).$ For ease of reading, let us state precisely what version of the Grothendieck theorem we are using.

\begin{theorem}[Theorem 5.5 of \cite{PisierFact}]\label{T:GT restated}
Let $X,Y$ be locally compact Hausdorff spaces, and let $T\colon C(X)\to M(Y)$ be a bounded linear map. Then there are probability measures $\mu\in\Prob(X),\eta\in \Prob(Y),$ and a bounded linear map $U\colon L^{2}(X,\mu)\to L^{2}(Y,\eta)$ so that $T=\iota_{\eta}^{t}\circ U\circ \iota_{\mu}$ where:
\begin{itemize}
\item $\iota_{\mu}\colon C(X)\to L^{2}(X,\mu)$, $\iota_{\eta}\colon C(Y)\to L^{2}(Y,\nu)$ are the canonical maps sending a continuous function to its $L^{2}$-class,
\item $^{t}$ denotes the Banach space transpose.
\end{itemize}
\end{theorem}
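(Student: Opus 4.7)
Since the statement is quoted directly from \cite{PisierFact}, the proof in the paper will presumably be a one-line citation. That said, here is the shape of the standard argument one would use to make the excerpt self-contained.

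The first step is to reformulate the hypothesis as a bilinear form. Writing $M(Y)\cong C(Y)^{*}$, the bounded linear map $T\colon C(X)\to M(Y)$ corresponds to a bounded bilinear form $B\colon C(X)\times C(Y)\to \mathbb{C}$ by $B(f,g)=\langle T(f),g\rangle$, with $\|B\|=\|T\|$. The claimed factorization is then equivalent to the existence of probability measures $\mu\in\Prob(X)$, $\eta\in\Prob(Y)$, and a bounded bilinear form $\widetilde{B}$ on $L^{2}(X,\mu)\times L^{2}(Y,\eta)$ satisfying $B(f,g)=\widetilde{B}(\iota_{\mu}f,\iota_{\eta}g)$; the operator $U\colon L^{2}(X,\mu)\to L^{2}(Y,\eta)$ is recovered from $\widetilde{B}$ by Riesz representation, and $T=\iota_{\eta}^{t}\circ U\circ \iota_{\mu}$ drops out by unwinding the adjoint.

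The analytic heart is Grothendieck's inequality, which supplies a universal constant $K_{G}$ and probability measures $\mu,\eta$ such that
\[|B(f,g)|\leq K_{G}\|B\|\,\|f\|_{L^{2}(X,\mu)}\,\|g\|_{L^{2}(Y,\eta)}\]
for all $f\in C(X)$, $g\in C(Y)$. This is established by first proving the scalar version for finite matrices (for example via Krivine's identity relating $\arcsin$ and Gaussian correlations), and then producing the measures by a Hahn--Banach/minimax argument applied to the convex cones $\{\sum_{i}|f_{i}|^{2}\}\subseteq C(X)$ and $\{\sum_{j}|g_{j}|^{2}\}\subseteq C(Y)$, after testing $B$ against sums of squares.

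With the inequality in hand, the remainder is formal: since $\iota_{\mu}(C(X))$ and $\iota_{\eta}(C(Y))$ are norm-dense in $L^{2}(X,\mu)$ and $L^{2}(Y,\eta)$ respectively, $B$ extends by continuity to the desired $\widetilde{B}$, and Riesz representation produces $U$. The main obstacle is Grothendieck's inequality itself; once that is granted, both the choice of measures and the extension to Hilbert space are essentially built into its statement, so no further creativity is needed.
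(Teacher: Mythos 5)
You correctly anticipated that the paper treats this as a cited result, and your sketch of the underlying argument --- reformulating $T$ as a bilinear form on $C(X)\times C(Y)$, invoking the Grothendieck inequality to obtain probability measures $\mu,\eta$ and the estimate $|B(f,g)|\leq K_G\|B\|\|f\|_{L^2(\mu)}\|g\|_{L^2(\eta)}$, and then extending by density and applying Riesz representation --- is an accurate summary of Pisier's proof. However, there is one point of substance in the paper that your proposal does not touch: Pisier's Theorem 5.5 is stated for \emph{compact} $X,Y$, whereas the version quoted here allows \emph{locally compact} $X,Y$ (and the paper will need $Y=\Z$, which is not compact). The paper's only real work around this statement is the reduction: it takes a Borel bijection $\phi\colon Y\to K$ onto a countably infinite compact Hausdorff space (and more generally, for locally compact metrizable $Y$, a Borel isomorphism onto a compact metrizable $K$ via \cite[Theorem 3.3.13]{Sriva}), transports $T$ to $\widetilde{T}\colon C(X)\to M(K)$ by $\widetilde{T}(f)=\phi_*(T(f))$, applies the compact case, and then pulls the resulting $\widetilde{U},\widetilde{\eta}$ back through $\phi^{-1}$. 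Your sketch would need to be supplemented with this compactification step (or an appeal to the remarks following Theorem 5.5 and Corollary 5.8 in \cite{PisierFact}, as the paper also notes) to actually justify the statement as written, since $C(Y)^*\cong M(Y)$ and the density of $C(Y)$ in $L^2(\eta)$ are cleanest when $Y$ is compact.
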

Theorem 5.5 of \cite{PisierFact} only proves this when $X,Y$ are compact, however the remarks after Theorem 5.5 of \cite{PisierFact} as well as \cite[Corollary 5.8]{PisierFact} imply the above version of Grothendieck's theorem.

We will actually only need Theorem \ref{T:GT restated} in the case that $X$ is compact, and $Y$ is countably infinite and discrete. In this case, we can find a bijection $\phi\colon Y\to K$ where $K$ is (any) countably infinite, compact Hausdorff space. Set $\widetilde{T}\colon C(X)\to M(K)$ by $\widetilde{T}(f)=\phi_{*}(T(f)).$ Let $\widetilde{U},\mu\in \Prob(X),\widetilde{\nu}\in \Prob(K)$ be as in Theorem \ref{T:GT restated} for $\widetilde{T}.$ Define $\eta\in \Prob(Y)$ by $\eta=(\phi^{-1})_{*}(\widetilde{\eta})$ and $U\colon L^{2}(X,\mu)\to L^{2}(Y,\eta)$ by $U(f)=\widetilde{U}(f)\circ \phi.$ It is direct to check that $U,\mu,\eta$ now satisfy the Grothendieck theorem for $T.$ This proof can be generalized: given any  locally compact, metrizable space $Y,$ there is a compact, metrizable space $K$ and a Borel bijection $\phi\colon Y\to K$ with Borel inverse (see \cite[Theorem 3.3.13]{Sriva}). If $Y$ is additionally assumed metrizable in Theorem \ref{T:GT restated}, then this reduces the case that $Y$ is locally compact in \ref{T:GT restated} to the case that $Y$ is compact Hausdorff.

\begin{lemma}
Suppose that $g\in \widetilde{\V},$ then there are $\mu \in \Prob(\TT),\eta\in \Prob(\Z)$  and a bounded linear operator $U\colon L_{2}(\TT,\mu)\to \ell_{2}(\Z,\eta)$ so that $\widehat{f} \widehat{g}=U(f)\eta$ for all $f\in L_\infty(\TT)$.
% Suppose that $g\in V,$ then there are $\mu \in \Prob(\TT),\eta\in \Prob(\Z)$  and a bounded linear operator $u\colon L_{2}(\TT,\mu)\to \ell_{2}(\Z,\eta)$ so that $\widehat{f}h=u(f)\eta$ for all $f\in C(\TT),$ where $h=\widehat{\widetilde{g}}.$

\end{lemma}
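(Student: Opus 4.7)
The plan is to apply Grothendieck's theorem (Theorem \ref{T:GT restated}) to a suitable bounded linear map and then extend the resulting factorization from $C(\TT)$ to $L_\infty(\TT)$ by an approximation argument. Specifically, define $T\colon C(\TT)\to M(\Z)=\ell_1(\Z)$ by $T(f)=\widehat{f}\widehat{g}.$ By Lemma \ref{L:basic properties of V}(\ref{I:bounded operator for V in WOT}), the convolution operator $C_g\colon L_\infty(\TT)\to A(\TT)$ is bounded, and its restriction to $C(\TT)$ is precisely $T$ under the identification $A(\TT)\cong \ell_1(\Z),$ so $T$ is bounded.

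Since $\TT$ is compact Hausdorff and $\Z$ is countable and discrete, the extended version of Theorem \ref{T:GT restated} described in the paragraph following its statement produces $\mu\in \Prob(\TT),$ $\eta\in \Prob(\Z),$ and a bounded linear operator $U\colon L_2(\TT,\mu)\to L_2(\Z,\eta)$ satisfying $T=\iota_\eta^{t}\circ U\circ \iota_\mu.$ Unwinding the definitions, and using that $\Z$ is discrete, $\iota_\eta^{t}\colon L_2(\Z,\eta)\to \ell_1(\Z)$ acts by $\iota_\eta^{t}(h)(n)=h(n)\eta(\{n\}).$ Hence for every $f\in C(\TT)$ and every $n\in \Z$ we have $\widehat{f}(n)\widehat{g}(n)=U(\iota_\mu(f))(n)\eta(\{n\}),$ which is the desired identity for continuous $f.$

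To pass to $f\in L_\infty(\TT),$ the natural move is to approximate $f$ by its Fej\'er means $f_k\in C(\TT),$ which satisfy $\|f_k\|_\infty\leq \|f\|_\infty$ and $\widehat{f_k}(n)=\sigma_k(n)\widehat{f}(n)$ with Fej\'er multiplier $0\leq \sigma_k(n)\leq 1$ converging to $1.$ On the left, since $g\in \widetilde{\V}$ gives $\widehat{f}\widehat{g}\in \ell_1(\Z),$ dominated convergence shows $\widehat{f_k}\widehat{g}\to \widehat{f}\widehat{g}$ in $\ell_1(\Z).$ On the right, $\|\iota_\mu(f_k)\|_{L_2(\mu)}\leq \|f_k\|_\infty\leq \|f\|_\infty,$ so $\{U(\iota_\mu(f_k))\}$ is bounded in $L_2(\eta);$ extracting a weakly convergent subsequence and testing against $\mathbf{1}_{\{n\}}\in L_2(\eta)$ for each $n$ with $\eta(\{n\})>0$ identifies the weak limit coordinatewise and yields a value $U(f)(n)$ with $\widehat{f}(n)\widehat{g}(n)=U(f)(n)\eta(\{n\}).$ For $n$ with $\eta(\{n\})=0,$ applying the $C(\TT)$ identity to the character $e_n$ forces $\widehat{g}(n)=0,$ so both sides vanish automatically.

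The main obstacle is a subtlety in the last step: because $\mu$ need not be absolutely continuous with respect to Lebesgue measure, the Lebesgue class of $f\in L_\infty(\TT)$ does not, a priori, determine a unique element of $L_2(\TT,\mu),$ so the symbol ``$U(f)$'' in the conclusion must be interpreted with care. This is handled by observing that the identity $\widehat{f}\widehat{g}=U(f)\eta$ only constrains $U(f)$ on the atoms of $\eta,$ and on those atoms the weak subsequential limit is determined uniquely by the equation itself; values off the support of $\eta$ are irrelevant in $L_2(\eta).$ Consequently, the procedure defines a bounded extension, still denoted $U,$ on all representatives arising from $L_\infty(\TT),$ which agrees with the Grothendieck factorization on $C(\TT)$ and satisfies $\widehat{f}\widehat{g}=U(f)\eta$ for every $f\in L_\infty(\TT).$
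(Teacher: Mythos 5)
Your argument follows the same core strategy as the paper: apply Grothendieck's factorization theorem (Theorem~\ref{T:GT restated}) to the bounded map $C(\TT)\to M(\Z)\cong\ell_1(\Z)$, $f\mapsto \widehat{f}\widehat{g}$, and unwind $\iota_\eta^{t}$ as multiplication by $\eta$ to get $\widehat{f}\widehat{g}=U(\iota_\mu(f))\eta$ for $f\in C(\TT)$. Up to that point you and the paper are identical.

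Where you diverge is that you go on to rigorously extend the identity from $C(\TT)$ to all of $L_\infty(\TT)$: approximating by Fej\'er means, using dominated convergence on the $\ell_1$ side (with $\widehat{f}\widehat{g}\in\ell_1$ as the dominating sequence), extracting a weak limit from the bounded sequence $U(\iota_\mu(f_k))$ in $L_2(\eta)$, and identifying it coordinatewise on atoms of $\eta$. You also note the genuine interpretational subtlety that $\mu\in\Prob(\TT)$ need not be absolutely continuous with respect to Lebesgue measure, so $\iota_\mu$ does not canonically send an $L_\infty(\TT)$-class to an $L_2(\mu)$-class, and you resolve it by observing that the identity only constrains values on atoms of $\eta$. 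All of this is correct. The paper's own proof, by contrast, stops at the $C(\TT)$ case and merely asserts the $L_\infty$ statement (it ends with ``We then have, for all $f\in C(\TT)$: \dots as desired''), relying on the fact that the subsequent application in Theorem~\ref{T:V for L1} only evaluates the identity at the characters $e_n$, which are continuous. So your proof is more complete than the paper's: you fill a small but genuine gap in the published argument, at the cost of a longer approximation step that the downstream application does not actually require.
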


\begin{proof}
Let $T\colon L_\infty(\TT)\to \ell_{1}(\Z)$ be given by $T(f)=\widehat{C_{g}(f)};$ by the previous lemma we know that $T$ is well defined and bounded. Recall that  there is an obvious isometric  isomorphism $ \ell_{1}(\Z)\cong M(\Z)^{*}$. So we can find $\mu\in \Prob(\TT)$, $\eta\in \Prob(\Z),$ and $U$ as in the Grothendieck theorem (Theorem \ref{T:GT restated}) for this map $T.$

 It is easy to see that $\iota_{\mu}^{t}(f)=f\eta$ for all $f\in \ell_2(\Z,\eta).$
We then have, for all $f\in C(\TT):$
\[\widehat{f}\widehat{g}=T(f)=\iota_{\mu}^{t}U(f)=U(f)\eta\]
% \[\widehat{f}h=T(f)=\iota_{\mu}^{t}u(f)=u(f)\eta\]
as desired.

\end{proof}

\begin{theorem}\label{T:V for L1}
We have that $\widetilde{\V}=L_2(\TT).$
\end{theorem}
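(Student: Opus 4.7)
My proof plan for $\widetilde{\V}=L_2(\TT)$ proceeds by proving the two inclusions separately, relying heavily on the preceding lemma for the harder direction.

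For the easy inclusion $L_2(\TT)\subseteq \widetilde{\V}$, I would observe that if $g\in L_2(\TT)$, then $\widehat{g}\in\ell_2(\Z)$. For any $f\in L_\infty(\TT)\subseteq L_2(\TT)$, Cauchy--Schwarz gives
\[
\sum_{n\in\Z}|\widehat{f}(n)\widehat{g}(n)|\le \|\widehat{f}\|_{\ell_2}\|\widehat{g}\|_{\ell_2}=\|f\|_{2}\|g\|_{2}<\infty,
\]
so $f*g\in A(\TT)$ and, by Lemma \ref{L:basic properties of V}(\ref{I:convolution for V in WOT}), $g\in\widetilde{\V}$.

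For the harder inclusion $\widetilde{\V}\subseteq L_2(\TT)$, I would use the factorization furnished by the previous lemma: there exist $\mu\in\Prob(\TT)$, $\eta\in\Prob(\Z)$, and a bounded operator $U\colon L_2(\TT,\mu)\to \ell_2(\Z,\eta)$ such that
\[
\widehat{f}(n)\widehat{g}(n)=U(f)(n)\,\eta(\{n\})\qquad \text{for all } f\in L_\infty(\TT),\ n\in\Z.
\]
The main idea is to test this identity against the exponentials $f=e_n$, which lie in $L_\infty(\TT)$. Since $\widehat{e_n}(k)=\delta_{n,k}$, this yields $\widehat{g}(n)=U(e_n)(n)\,\eta(\{n\})$ for every $n\in\Z$ (with the convention that if $\eta(\{n\})=0$ then both sides are $0$). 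Because $|e_n|\equiv 1$ and $\mu$ is a probability measure, $\|e_n\|_{L_2(\TT,\mu)}=1$, hence
\[
|U(e_n)(n)|^2\,\eta(\{n\})\le \|U(e_n)\|_{\ell_2(\Z,\eta)}^2\le \|U\|^2.
\]

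Combining these two displayed formulas gives $|\widehat{g}(n)|^2\le \|U\|^2\,\eta(\{n\})$ for every $n\in\Z$ (handling the $\eta(\{n\})=0$ case separately: then $\widehat{g}(n)=0$). Summing over $n$ and using that $\eta$ is a probability measure,
\[
\sum_{n\in\Z}|\widehat{g}(n)|^2\le \|U\|^2\sum_{n\in\Z}\eta(\{n\})\le \|U\|^2<\infty,
\]
so $\widehat{g}\in\ell_2(\Z)$ and Parseval's theorem gives $g\in L_2(\TT)$. The main obstacle is really hidden inside the preceding lemma (obtaining the Grothendieck-style factorization); once that factorization is in hand, the present argument is just the clean observation that plugging in the orthonormal family $\{e_n\}$ into the factorization identity, combined with the probability normalization on $\mu$ and $\eta$, forces $\widehat{g}\in\ell_2$.
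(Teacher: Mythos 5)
Your argument is correct and follows essentially the same route as the paper: for the hard inclusion, apply the Grothendieck-style factorization from the preceding lemma, test it against $f=e_n$ (using that $\mu,\eta$ are probability measures, so $\|e_n\|_{L_2(\mu)}=1$ and $\eta\in\ell_1$) to deduce $|\widehat{g}(n)|^2\le \|U\|^2\eta(\{n\})$, and sum. You also spell out the easy inclusion $L_2\subseteq\widetilde{\V}$ via Cauchy--Schwarz, which the paper leaves implicit; otherwise the two proofs coincide.
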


\begin{proof}
Let the operator $U$ and the measures $\mu,\eta$ be as in the preceding lemma. Define $S\colon C(\TT) \to L_2(\Z,\eta)$ by $S(f)=U(\iota_{\mu}(f))$ where $\iota_{\mu}\colon C(\TT)\to L_2(\TT,\mu)$ sends a continuous function to its $L_2(\mu)$-class. Then $\widehat{f}\widehat{g}=S(f)\eta$ for all $f\in L_\infty(\TT).$ Apply this to $f=e_{n}$ to see that  $\widehat{g}(n)=S(e_n)(n)\eta(n)$ and so:
\[|\widehat{g}(n)|^{2}= |S(e_{n})(n)\eta(n)|^{2}=|S(e_n)(n)|^{2}\eta(n)\eta(n)\leq \|S(e_{n})\|_{\ell_{2}(\eta)}^2\eta(n)\leq \|S\|^2\eta(n).\]
So $|\widehat{g}|^{2}\leq \|S\|\eta\in \ell_{1}(\Z).$ Hence $\widehat{g}\in \ell_{2}(\Z),$ so by Parseval's theorem we know that $g\in L_2(\TT).$
% Let $u,h,\mu,\eta$ be as in the preceding lemma. Define $S\colon C(\TT)\to L_2(\Z,\eta)$ by $S(f)=u(\iota_{\mu}(f))$ where $\iota_{\mu}\colon C(\TT)\to L_2(\TT,\mu)$ sends a continuous function to its $L_2(\mu)$-class. The $\widehat{f}h=S(f)\eta$ for all $f\in C(\TT).$ Apply this to $f=e_{n}$ to see that  $h(n)=S(e_n)(n)\eta(n)$ and so:
% \[|h(n)|^{2}= |S(e_{n})(n)\eta(n)|^{2}=|S(e_n)(n)|^{2}\eta(n)\eta(n)\leq \|S(e_{n})\|_{\ell_{2}(\eta)}^2\eta(n)\leq \|S\|^2\eta(n).\]
% So $|h|^{2}\leq \|S\|\eta\in \ell_{1}(\Z).$ Hence $h\in \ell_{2}(\Z),$ so by Parseval's theorem we know that $\widetilde{g}\in L_2(\TT),$ and thus $g\in L_2(\TT).$
\end{proof}

\begin{corollary}
\begin{enumerate}[(i)]
\item If $f\in L_\infty(\TT),$ then the Fourier series for $f$ converges unconditionally in the weak operator topology on $L_4(\TT).$ \label{I:easy part unconditional WOT}
\item Conversely, if $E$ is a subspace of $L_2(\TT),$ and the Fourier series for $f$ converges unconditionally in the weak operator topology on $E$ for every $f\in L^{\infty}(\TT),$ then $E\subseteq L_4(\TT).$ \label{I:hard part unconditional WOT}
\item  Statements $(i)$ and $(ii)$ remain valid if $L_\infty(\TT)$ is replaced by $C(\TT)$.

\end{enumerate}
\end{corollary}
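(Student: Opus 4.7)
The plan is to reduce all three parts to the identification $\widetilde{\V}=L_2(\TT)$ provided by Theorem \ref{T:V for L1}. I would first observe that for $h,k\in L_2(\TT)$ one has $\la e_nh,k\ra=\overline{\widehat{\bar hk}(n)}$, so writing $g=\bar hk\in L_1(\TT)$, the scalar series $\sum_{n\in\Z}\widehat{f}(n)\la e_nh,k\ra$ converges unconditionally if and only if $\sum_{n\in\Z}\widehat{f}(n)\overline{\widehat{g}(n)}$ does. Consequently, the Fourier series of every $f\in L_\infty(\TT)$ converges unconditionally in the weak operator topology on a subspace $E\subseteq L_2(\TT)$ if and only if $\bar hk\in\widetilde{\V}$ for every $h,k\in E$, which by Theorem \ref{T:V for L1} is equivalent to $\bar hk\in L_2(\TT)$ for every $h,k\in E$.

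With this reformulation in hand, (i) follows at once from H\"older's inequality: if $h,k\in L_4(\TT)$ then $\|\bar hk\|_2\leq\|h\|_4\|k\|_4<\infty$. Likewise, (ii) follows by the polarization-type specialization $h=k$: the condition $|h|^2\in L_2$ is precisely $h\in L_4$, so $E\subseteq L_4(\TT)$.

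For (iii), the $C(\TT)$-version of (i) is immediate since $C(\TT)\subseteq L_\infty(\TT)$. The nontrivial task is the $C(\TT)$-version of (ii), which strengthens the hypothesis by only requiring unconditional convergence for continuous $f$. For this, I would verify that the proof of Theorem \ref{T:V for L1} (the inclusion $\widetilde{\V}\subseteq L_2$) continues to go through when the definition of $\widetilde{\V}$ is weakened to require unconditional convergence of $\sum_{n\in\Z}\widehat{f}(n)\overline{\widehat{g}(n)}$ only for $f\in C(\TT)$. The closed-graph portion of Lemma \ref{L:basic properties of V}(ii) applies verbatim to yield boundedness of $C_g\colon C(\TT)\to A(\TT)$; the Grothendieck factorization (Theorem \ref{T:GT restated}) is in any case formulated for operators out of $C(X)$; and the decisive evaluation at $f=e_n\in C(\TT)$ at the end of the proof of Theorem \ref{T:V for L1} still produces the bound $|\widehat{g}(n)|^2\leq\|S\|^2\eta(n)$ and hence $g\in L_2(\TT)$. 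The main obstacle is therefore just confirming that no step in the proof of Theorem \ref{T:V for L1} silently requires test functions in $L_\infty\setminus C(\TT)$, but as the critical ones are $f=e_n$ this verification is routine.
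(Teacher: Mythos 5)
Your proposal is correct, and its overall strategy---reducing everything to the identification $\widetilde{\V}=L_2(\TT)$ from Theorem \ref{T:V for L1}---matches the paper's in spirit, but there are two points worth noting. For part (i), you invoke the full equality $\widetilde{\V}=L_2(\TT)$ together with H\"older to conclude $\bar hk\in L_2\subseteq\widetilde{\V}$ whenever $h,k\in L_4$; the paper instead proves (i) by a direct estimate, choosing a finite $E\subseteq\Z$ with $\left\|\sum_{n\notin E}\widehat{f}(n)e_n\right\|_2<\varepsilon$ and bounding $\left|\left\la\left(f-\sum_{n\in E}\widehat{f}(n)e_n\right)g,h\right\ra\right|\leq\varepsilon\|g\|_4\|h\|_4$ via Cauchy--Schwarz and H\"older, thereby sidestepping the (easy, but never explicitly proved) containment $L_2\subseteq\widetilde{\V}$ on which your reformulation silently relies; that containment does hold, since $\widehat f,\widehat g\in\ell_2$ forces absolute convergence of $\sum_n\widehat f(n)\overline{\widehat g(n)}$, so your route is sound. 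For part (ii) your argument coincides with the paper's: specialize $\bar hk\in L_2$ to $h=k$ to get $|h|^2\in L_2$, i.e.\ $h\in L_4$. Where your proposal genuinely adds value is part (iii): the paper's proof of the corollary in fact does not address it, whereas you correctly check that the proof of the inclusion $\widetilde{\V}\subseteq L_2$ tests only against continuous functions---the closed-graph boundedness of $C_g$ on $C(\TT)$, the Grothendieck factorization (which is formulated for operators out of $C(X)$), and the final evaluations at $f=e_n\in C(\TT)$---so the same argument runs under the weaker hypothesis that unconditional WOT convergence is assumed only for $f\in C(\TT)$. That completion of part (iii) is a welcome supplement to the paper's proof.
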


\begin{proof}
(\ref{I:easy part unconditional WOT}): Suppose that $g,h\in L_4(\TT),$ and fix an $f\in L_\infty(\TT).$ Given $\varepsilon>0,$ choose a finite $E\subseteq \Z$ so that $\left\|\sum_{n\in \Z\setminus E}\widehat{f}(n)e_{n}\right\|_{2}<\varepsilon.$ Then:
\begin{align*}
\left|\left\la\left(f-\sum_{n\in E}\widehat{f}(n)e_n\right)g,h\right\ra\right|&=\left|\left\la\left(f-\sum_{n\in E}\widehat{f}(n)e_{n}\right),\overline{g}h\right\ra\right|\\
&=\left|\left\la\left(\sum_{n\in \Z\setminus E}\widehat{f}(n)e_{n}\right),\overline{g}h\right\ra\right|\\
&\leq \left\|\sum_{n\in \Z\setminus E}\widehat{f}(n)e_{n}\right\|\|gh\|_{2}\\
&\leq \varepsilon\|g\|_{4}\|h\|_{4}.
\end{align*}
Since $g,h$ are fixed and $\varepsilon>0$ is arbitrary, this proves that the Fourier series of $f$ converges unconditionally in the weak operator topology on $L_4(\TT).$

(\ref{I:hard part unconditional WOT}): Assume that $E\subseteq L_2(\TT)$ and that the Fourier series for $f$ converges unconditionally in the weak operator topology on $E.$ It is easy to see that for every $h,k\in E,$ we have that $h\overline{k}\in \widetilde{\V},$ where $\widetilde{\V}$ is defined as in the discussion before Lemma \ref{L:basic properties of V}. By Theorem \ref{T:V for L1}, we have that $h\overline{k}\in L_2(\TT)$ for every $h,k\in \widetilde{\V}.$

Now fix a $g\in \widetilde{\V}.$ Applying the discussion of the preceding paragraph to $h=g=k,$ we see that $|h|^{2}\in L_2(\TT).$ Hence $h\in L_4(\TT).$

\end{proof}

%%%%%%%%%%%%%%%%%%%%%%%%%%%%%
%%%% Multivariate Section %%%
%%%%%%%%%%%%%%%%%%%%%%%%%%%%%

\section{The Multivariate Case}\label{SECMulti}

In this subsection, we briefly discuss the extensions of the previous results to higher dimensions.  Essentially all results hold without change for arbitrary dimension once the statements are interpreted correctly.  To begin, let us state the analogue of \Rev 's theorem (Theorem \ref{THMRevesz}) for multivariate case.

For higher dimensions, we replace the notion of a subsequence of rearranged partial sums with partial sums with respect to a nested exhaustion of $\Z^d$.  Precisely, for $f\in C(\TT)$ and a finite subset $J$ of $\Z^d$ define
\[S_{J}[f](t):=\sum_{n\in J}c_ne^{2\pi int},\quad t\in\TT\]
where
\[ c_n:=\widehat{f}(n):=\dint_{\TT^d} f(t)e^{-2\pi i\langle n,t\rangle}dt,\;\quad n\in \Z^d.\]

The following is \Rev's analogue of Theorem \ref{THMRevesz} for arbitrary dimension; the argument is essentially the same as the one-dimensional case.

\begin{theorem}[\Rev, \cite{revesz1990}]\label{T:Revesz Multivariate} For every $f\in C(\TT^d)$, there exists a nested sequence of finite sets $\{J_k\}\subset\Z^d$ with $\bigcup_{k}J_k=\Z^d$ such that
$\|S_{J_k}[f]-f\|_\infty\to 0$ as $k\to \infty$.
\end{theorem}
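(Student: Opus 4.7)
The plan is to adapt Révész's one-dimensional probabilistic argument, replacing the rearranged partial sums $S_{\sigma,N}[f]$ by partial sums over finite subsets $J_k\subseteq \Z^d$ exhausting the lattice, and using a multivariate de la Vallée Poussin kernel in place of its one-dimensional analogue. First I would fix the $d$-fold tensor product kernel $V_N=V_N^{(1)}\otimes\cdots\otimes V_N^{(1)}$, where $V_N^{(1)}$ is the classical one-dimensional Poussin kernel; as a tensor product of positive summability kernels with uniformly bounded $L_1$ norms, $V_N$ is an approximate identity on $C(\TT^d)$, so $\|V_N[f]-f\|_\infty\to 0$ for every $f\in C(\TT^d)$. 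Moreover, $V_N[f]$ is a trigonometric polynomial whose spectrum lies in the cube $Q_{2N}:=[-2N,2N]^d$ and which agrees with $S_{Q_N}[f]$ in coefficients indexed by $Q_N:=[-N,N]^d$.

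Having fixed a rapidly growing sequence $N_k$ (say $N_{k+1}>2N_k$), the theorem reduces to producing, for each $k$, a finite set $J_k$ satisfying
\[
Q_{N_k}\subseteq J_k\subseteq Q_{2N_k}, \qquad J_{k-1}\subseteq J_k, \qquad \|S_{J_k}[f]-V_{N_k}[f]\|_\infty<2^{-k}.
\]
Because $N_{k+1}>2N_k$ forces $Q_{2N_k}\subseteq Q_{N_{k+1}}\subseteq J_{k+1}$, nestedness propagates automatically from one level to the next, and $\bigcup_k J_k=\Z^d$ is immediate from $Q_{N_k}\nearrow\Z^d$. The triangle inequality combined with $\|V_{N_k}[f]-f\|_\infty\to 0$ then finishes the proof.

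The heart of the construction is a probabilistic selection on the annulus $A_k:=Q_{2N_k}\setminus Q_{N_k}$. The function $V_{N_k}[f]-S_{Q_{N_k}}[f]$ is a trigonometric polynomial supported in $A_k$ with coefficients $c_n=(\lambda_{N_k}(n)-1)\widehat{f}(n)$, where $\lambda_{N_k}$ is the Fourier multiplier of $V_{N_k}$; each $|c_n|$ is bounded by $2\|f\|_\infty$. I would include each $n\in A_k$ in $J_k$ independently with probability $\lambda_{N_k}(n)$, and show by a Bernstein-type tail estimate for random trigonometric polynomials on $\TT^d$ that with positive probability the uniform norm of $S_{J_k}[f]-V_{N_k}[f]$ is at most $2^{-k}$. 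This is the exact multivariate analogue of Révész's random selection.

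The main obstacle is the multidimensional tail bound for $\sup_{t\in\TT^d}\bigl|\sum_{n\in A_k}\eta_n c_n e^{2\pi i\langle n,t\rangle}\bigr|$ with independent Bernoulli $\eta_n$. In dimension one, Révész controls this by combining a second-moment estimate with a covering argument over $\TT$ together with a Bernstein inequality of step $O(N_k)$. The same scheme goes through in $d$ dimensions after replacing the one-dimensional Bernstein inequality with its multivariate analogue on $Q_{2N_k}$-spectral polynomials, and covering $\TT^d$ by $O(N_k^d)$ balls of radius comparable to $1/N_k$. The resulting polynomial-in-$N_k$ loss is absorbed by choosing $N_k$ to grow fast enough (e.g.\ doubly exponentially) so that the failure probabilities at each step form a summable series, allowing a Borel--Cantelli argument to produce a single realization $J_k$ satisfying all the required properties simultaneously.
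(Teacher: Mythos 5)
The paper does not prove this statement: Theorem \ref{T:Revesz Multivariate} is cited directly from R\'ev\'esz's paper \cite{revesz1990}, with the remark that the one-dimensional argument transfers verbatim. So there is no proof in the paper to compare against; you are reconstructing R\'ev\'esz's own argument.

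Your outline has the right skeleton --- the de la Vall\'ee Poussin reduction, the annulus $A_k=Q_{2N_k}\setminus Q_{N_k}$, and the Bernoulli selection with $\PP[n\in J_k]=\lambda_{N_k}(n)$ --- and this does match the heuristic description of R\'ev\'esz's method given in the introduction. But the quantitative heart of your argument does not close. The concentration-plus-net-plus-Bernstein scheme you invoke yields, with high probability, only $\|S_{J_k}[f]-V_{N_k}[f]\|_\infty\lesssim\sigma_k\sqrt{\log N_k}$, where $\sigma_k^2=\sum_{n\in A_k}\lambda_{N_k}(n)(1-\lambda_{N_k}(n))|\widehat{f}(n)|^2\le \tau_{N_k}^2:=\sum_{n\notin Q_{N_k}}|\widehat{f}(n)|^2$, and the $\sqrt{\log N_k}$ factor coming from the $O(N_k^d)$-point net is genuinely there (Salem--Zygmund is sharp). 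You therefore need a subsequence $N_k\to\infty$ along which $\tau_{N_k}\sqrt{\log N_k}\to0$, and this can fail for a continuous $f$: by the de Leeuw--Kahane--Katznelson theorem one can choose $f\in C(\TT^d)$ whose $\ell_2$ Fourier tail obeys $\tau_N^2\gtrsim 1/\log N$ for all $N$, so that $\tau_N\sqrt{\log N}$ is bounded away from $0$ along every subsequence. Your suggestion to take $N_k$ doubly exponential points the wrong way: enlarging $N_k$ inflates $\sqrt{\log N_k}$ while $\tau_{N_k}$ is dictated by $f$, not by you; so the claimed bound $2^{-k}$ is unjustified and the Borel--Cantelli step has nothing to sum. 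Whatever probabilistic lemma R\'ev\'esz actually uses must be finer than a bare net/Bernstein estimate in terms of the $\ell_2$ tail, and your sketch has not supplied it. (A small bookkeeping slip along the way: $V_{N_k}[f]-S_{Q_{N_k}}[f]$ has annulus coefficients $\lambda_{N_k}(n)\widehat{f}(n)$, not $(\lambda_{N_k}(n)-1)\widehat{f}(n)$; the random polynomial one actually estimates is $S_{J_k}[f]-V_{N_k}[f]$, with coefficients $(\mathbbm{1}_{J_k}(n)-\lambda_{N_k}(n))\widehat{f}(n)$ on $A_k$.)
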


Since \Rev's Theorem holds in arbitrary dimension, the arguments of Theorem \ref{THMEquivalences} remain valid in arbitrary dimension as well once the notion of a rearrangement is changed to a nested exhaustion of $\Z^d$ with finite sets as in Theorem \ref{T:Revesz Multivariate}.

\subsection{Characterization of $\mathcal{V}_{S_\infty}^{(d)}$ and $\mathcal{W}_{S_\infty}^{(d)}$}

In this subsection, we provide the analogue of the characterization of $\V_{S_\infty}$ and $\W_{S_\infty}$ for $\Z^d$.  Namely, if $\V_{S_\infty}^{(d)}$ is the largest subspace of $L_2(\TT^d)$ on which the multiplication operators $M_{S_{J_k}[f]}\to M_f$ in the SOT on $\V_{S_\infty}^{(d)}$ for every nested exhaustion of $\Z^d$ by finite sets $\{J_k\}$ and every $f\in L_\infty(\TT^d)$, then we show that $\V_{S_\infty}^{(d)}=L_\infty(\TT^d)$.  Likewise, if $\W_{S_\infty}^{(d)}$ and $\widetilde{\W}_{S_\infty}^{(d)}$ are the largest subspaces of $L_\infty(\TT^d)$ for which $M_{S_{J_k}[f]}\to M_f$ in the SOT, respectively WOT, on $L_2$ for every exhaustion of $\Z^d$, then $\W_{S_\infty}^{(d)}=\widetilde{\W}_{S_\infty}^{(d)}=A(\TT^d)$, the $d$--dimensional Wiener algebra.

To begin, we note the following direct analogue of Proposition \ref{equivLinfit}.  As for notation, given a subset $J$ of $\Z^d$ and $\Epsilon\in\{-1,1\}^{\Z^d}$, define
\[T_{\Epsilon,J}[f](t):=\sum_{n\in J}\epsilon_n c_ne^{2\pi i\bracket{n,t}}.\]  Likewise, $T_\Epsilon[f]$ is the $L_2$ limit of $T_{\Epsilon,J_k}[f]$ where $\{J_k\}$ is any nested exhaustion of $\Z^d$ via finite subsets.  The proof of the following is the same as the univariate case, so is omitted.

\begin{proposition}\label{PROPVMultivariate}
The following are equivalent:
\begin{enumerate}[(i)]
\item $\V_{S_\infty}^{(d)}=L_\infty(\TT^d)$.
\item There exists an absolute constant $c>0$ such that for any measurable set $E\subset\TT^d$ with $|E|>0$ there exists a function $f\in C(\TT^d)$ with $\|f\|_\infty\leq1$, and there exists $\Epsilon\in\{-1,1\}^{\Z^d}$ for which
\[\|T_{\Epsilon}[f]\mathbbm{1}_E\|_2\geq c.\]
\end{enumerate}
\end{proposition}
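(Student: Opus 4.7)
The plan is to follow the two directions of Proposition \ref{equivLinfit} almost verbatim, replacing $\Z$ by $\Z^d$ and replacing partial sums $S_N$ and $T_{\Epsilon,N}$ with the partial sums $S_J$ and $T_{\Epsilon,J}$ over finite subsets $J \subset \Z^d$. Before doing so, I would first establish the multivariate analogue of Proposition \ref{equivvstar}, since both directions of the present proposition invoke it. That analogue requires two ingredients, both already available: Theorem \ref{T:Revesz Multivariate} (\Rev's theorem in arbitrary dimension) provides the approximation step $(iii) \Rightarrow (ii)$, while the Kashin--Sahakian equivalence between rearrangements and sign changes, cited in the univariate proof, applies to any countable index set and so gives $(i) \Leftrightarrow (ii)$. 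The remaining direction $(ii) \Rightarrow (iii)$ is a straightforward closed-graph / uniform boundedness style construction that is insensitive to the index set.

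For $(i) \Rightarrow (ii)$ I would argue by contrapositive. Assume (ii) fails, so for each $k \in \N$ there exists a measurable $E_k \subset \TT^d$ with $|E_k|>0$ such that
\[
\|T_{\Epsilon,J}[f]\mathbbm{1}_{E_k}\|_2 \leq \frac{1}{k^2}
\]
for every $f \in C(\TT^d)$ with $\|f\|_\infty \leq 1$, every $\Epsilon \in \{-1,1\}^{\Z^d}$, and every finite $J \subset \Z^d$. The same inductive measure-theoretic shrinking used in the univariate case (which is entirely dimension-free, relying only on the fact that a set of positive measure has a proper subset of positive measure with complement of positive measure) allows me to replace $E_k$ by pairwise disjoint subsets with $|E_k| \leq 1/k^4$. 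Set $g := \sum_k k\,\mathbbm{1}_{E_k}$, which lies in $L_2(\TT^d) \setminus L_\infty(\TT^d)$. Then
\[
\|T_{\Epsilon,J}[f]\|_{L_{2,g}}^2 = \sum_k k^2 \int_{E_k} |T_{\Epsilon,J}[f]|^2\,dt \leq \sum_k \frac{1}{k^2} < \infty,
\]
so by the multivariate Proposition \ref{equivvstar} and the multivariate analogue of Remark \ref{REMVstarLinfty}, we get $g \in \V_{S_\infty}^{(d)}$, contradicting $\V_{S_\infty}^{(d)} = L_\infty(\TT^d)$.

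For $(ii) \Rightarrow (i)$ I would take $g \in L_2(\TT^d) \setminus L_\infty(\TT^d)$ and, for each $k$, choose a measurable $E_k$ with $|g(t)| > k$ a.e.\ on $E_k$ and $|E_k|>0$. By hypothesis there are $f_k \in C(\TT^d)$ with $\|f_k\|_\infty \leq 1$ and $\Epsilon^{(k)} \in \{-1,1\}^{\Z^d}$ so that $\|T_{\Epsilon^{(k)}}[f_k]\mathbbm{1}_{E_k}\|_2 \geq c$. Approximating in $L_2$ by finite partial sums, there is a finite $J_k \subset \Z^d$ with $\|T_{\Epsilon^{(k)},J_k}[f_k]\mathbbm{1}_{E_k}\|_2 \geq c/2$, whence $\|T_{\Epsilon^{(k)},J_k}[f_k]\|_{L_{2,g}} \geq ck/2$. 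The multivariate Proposition \ref{equivvstar} $(i)\Rightarrow(iii)$ then yields $g \notin \V_{S_\infty}^{(d)}$, so $\V_{S_\infty}^{(d)} \subseteq L_\infty(\TT^d)$; the reverse inclusion is the multivariate version of Proposition \ref{CORgLinfty}, whose one-line proof goes through unchanged.

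The main obstacle, if any, is cleanly pinning down the multivariate version of Proposition \ref{equivvstar}. Once one verifies that the Kashin--Sahakian theorem is stated for series indexed by a countable set and that Theorem \ref{T:Revesz Multivariate} supplies the required trigonometric approximation, the rest is a routine rewriting of the one-dimensional argument with $\Z$ replaced by $\Z^d$ and partial sums reinterpreted over finite subsets.
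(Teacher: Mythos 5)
Your proposal is correct and is exactly the approach the paper takes: the paper omits the proof with the remark that it is the same as the univariate Proposition \ref{equivLinfit}, which is precisely what you carry out, including the observation that the multivariate analogue of Proposition \ref{equivvstar} is the one ingredient that needs to be checked (via Theorem \ref{T:Revesz Multivariate} and the Kashin--Sahakian equivalence for series over an arbitrary countable index set).
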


Our estimate for the constants involved in Theorem \ref{THMC0C1} are dimension-dependent; however the estimate still gives a positive answer as follows.

\begin{theorem}
If $\tilde{C}_1$ is the largest constant for which condition \textit{(ii)} of Proposition \ref{PROPVMultivariate} holds for all $0<c<\tilde{C}_1$, and $C_0$ is the constant from Problem \ref{PROBC0}, then the following holds:
\[ \tilde{C}_1 \geq \left(\dfrac23\right)^\frac{3d}{2}\pi^{-\frac{d}{2}}\dfrac{1}{C_0^d}.\]
Consequently, $\V_{S_\infty}^{(d)}=L_\infty(\TT^d)$ for every $d\in\N$.
\end{theorem}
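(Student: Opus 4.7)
The plan is to tensorize the univariate argument of Theorem \ref{THMC0C1}. By Proposition \ref{PROPVMultivariate}, it suffices to establish the stated lower bound on $\tilde{C}_1$; once $\tilde{C}_1>0$ the identification $\V_{S_\infty}^{(d)}=L_\infty(\TT^d)$ follows immediately. Given $E\subseteq \TT^d$ with $|E|>0$, translate (absorbing the translation into the function to be constructed, as in the proof of Theorem \ref{THMC0C1}) so that $0$ is a point of Lebesgue density of $E$. The multivariate Lebesgue differentiation theorem then yields, for any prescribed $\gamma_0\in(0,1)$, a $\gamma_1>0$ such that
\[ |E\cap [-\gamma,\gamma]^d| \;\geq\; (2\gamma)^d(1-\gamma_0)^{2d} \qquad \text{for every } 0<\gamma<\gamma_1.\]

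The next step is to promote the univariate extremal sequence from Problem \ref{PROBC0} to a tensor-product construction. Let $(\epsilon_n)_{n=0}^N\subset \{-1,1\}$ satisfy $\bigl\|\sum_{n=0}^N \epsilon_n e_n\bigr\|_\infty\leq C_0\sqrt{N+1}$ (for a sufficiently large $N$ to be chosen below), set
\[ f_0(t) := \frac{1}{C_0\sqrt{N+1}}\sum_{n=0}^N\epsilon_n e^{2\pi int}, \qquad f(t_1,\dots,t_d) := \prod_{j=1}^d f_0(t_j),\]
and define $\Epsilon\in\{-1,1\}^{\Z^d}$ by $\Epsilon_{(n_1,\dots,n_d)} := \prod_{j=1}^d \epsilon_{n_j}$ on $\{0,\dots,N\}^d$, extending arbitrarily elsewhere (where $\widehat{f}$ vanishes anyway). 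Then $\|f\|_\infty\leq 1$, and $T_\Epsilon[f]$ separates as a product of Dirichlet-type kernels:
\[ T_\Epsilon[f](t_1,\dots,t_d) \;=\; \frac{(-1)^{Nd}}{C_0^d(N+1)^{d/2}}\prod_{j=1}^d\frac{\sin(\pi(N+1)t_j)}{\sin(\pi t_j)}.\]

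The final step is a localized lower bound on this product kernel. Using $|\sin t / t|\geq 1-\delta$ for $|t|\leq \delta$ exactly as in the univariate proof, choose $N$ large enough that $\delta/(\pi(N+1))<\gamma_1$; then on the cube $Q:=[-\delta/(\pi(N+1)),\delta/(\pi(N+1))]^d$ the product kernel is pointwise bounded below by $(N+1)^{d/2}(1-\delta)^d/C_0^d$. Combining this with the density estimate for $|E\cap Q|$ and letting $\gamma_0\to 0$ gives
\[ \|T_\Epsilon[f]\mathbbm{1}_E\|_2 \;\geq\; \frac{1}{C_0^d}\left[(1-\delta)\sqrt{\tfrac{2\delta}{\pi}}\right]^d.\]
Since the $d$-th power preserves the maximizer of the positive bracketed quantity, optimizing with the univariate choice $\delta=1/3$ (as in the proof of Theorem \ref{THMC0C1}) yields the constant $\left(\tfrac{2}{3}\right)^{3d/2}\pi^{-d/2}C_0^{-d}$, as claimed.

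The main obstacle is not conceptual but combinatorial bookkeeping: one must verify that the univariate Lebesgue density statement transfers to cubes around $0$, that the tensor-product sign array $\Epsilon$ genuinely lies in $\{-1,1\}^{\Z^d}$, and that the Dirichlet-kernel factorization is compatible with the multivariate definition of $T_\Epsilon[f]$. None of these require any new idea beyond those already present in Theorem \ref{THMC0C1}, so the bulk of the work is simply verifying that the product structure is preserved at each step.
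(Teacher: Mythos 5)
Your proof is correct and follows essentially the same route as the paper: reduce to the criterion in Proposition~\ref{PROPVMultivariate}, take a Lebesgue density point of $E$ (after translation), tensorize the extremal $\pm 1$ polynomial from Problem~\ref{PROBC0} so that $T_\Epsilon[f]$ factors as a product of Dirichlet kernels, bound it below pointwise on a small cube, and optimize in $\delta$ with the same choice $\delta=1/3$. The only cosmetic difference is the exponent $(1-\gamma_0)^{2d}$ in your density estimate versus $(1-\gamma_0)^2$ in the paper's, which is immaterial once $\gamma_0\to 0$.
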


\begin{proof}
Let $0<\gamma_0<1$ be arbitrary, and let $E\subset\TT^d$ have positive Lebesgue measure.  Without loss of generality, $0$ is a point of Lebesgue density of $E$, from which it follows that there exists some $\gamma_1>0$ such that for every $0<\eps<\gamma_1$,
\[1 \geq |E\cap[-\eps,\eps]^d|\geq(2\eps)^d(1-\gamma_0)^2.\]
Define the tensor version of the sinc function via
\[\sinc^{\otimes d}(t) = \sinc(t_1)\dots\sinc(t_d),\quad t\in\R^d,\] and via \eqref{EQsinc}, we have that if $0<\delta<1$ and $\|t\|_\infty\leq\delta$, then
\[|\sinc^{\otimes d}(t)|\geq \prod_{j=1}^d(1-|t_j|)\geq(1-\delta)^d.\]
Now given $\delta$, choose an $N\in\N$ such that $\frac{\delta}{\pi(N+1)}<\gamma_1$, and let $(\eps_n)_{n\in\N}\subset\{-1,1\}$ be a sequence satisfying the conclusion of Problem \ref{PROBC0} with constant $C_0$, and let $f(t) = \frac{1}{C_0\sqrt{N+1}}\finsum{n}{0}{N}\eps_n e^{2\pi int}$ be the function in the proof of Theorem \ref{THMC0C1}.  Next, set $F = f^{\otimes d}$, and notice that $\|F\|_{L_\infty(\TT^d)} \leq 1$ since $\|f\|_{L_\infty(\TT)}\leq1$.
Define \[\Lambda:=\{n\in\Z^d:0\leq n_i\leq N, i=1,\dots, d\}.\]  Then let $\tilde{\Epsilon}$ be defined by $\tilde{\eps}_n=\eps_{n_1}\dots\eps_{n_d}$ for $n\in\Lambda$.  Thus, by the same calculation as in the proof of Theorem \ref{THMC0C1} we have
\begin{align*}
 T_{\tilde{\Epsilon}}[f](t) & = \dfrac{1}{C_0^d (N+1)^\frac{d}{2}}\sum_{n\in\Lambda}e^{2\pi i\bracket{n,t}}\\
 & = \dfrac{1}{C_0^d (N+1)^\frac{d}{2}}\left(\finsum{n}{0}{N}e^{2\pi int}\right)^{\otimes d}\\
 & = \dfrac{1}{C_0^d (N+1)^\frac{d}{2}}(-1)^{Nd}\left(\dfrac{\sin(\pi(N+1)t)}{\sin(\pi t)}\right)^{\otimes d}.\\
\end{align*}
Thus, if $\|t\|_\infty\leq\frac{\delta}{\pi(N+1)}$, we have that \[|T_{\tilde{\Epsilon}}[f](t)|\geq\dfrac{1}{C_0^d (N+1)^\frac{d}{2}}(1-\delta)^d(N+1)^d = \dfrac{(N+1)^\frac{d}{2}}{C_0^d}(1-\delta)^d.\]
Finally,
\begin{align*}
\|T_{\tilde{\Epsilon}}[f]\mathbbm{1}_E\|_2 & \geq \|T_{\tilde\Epsilon}[f]\mathbbm{1}_{E\cap[-\frac{\delta}{\pi(N+1)},\frac{\delta}{\pi(N+1)}]}\|_2\\
& \geq \dfrac{(N+1)^\frac{d}{2}}{C_0^d}(1-\delta)^d \left|E\cap\left[-\frac{\delta}{\pi(N+1)},\frac{\delta}{\pi(N+1)}\right]\right|^\frac12\\
& \geq \dfrac{(N+1)^\frac{d}{2}}{C_0^d}(1-\delta)^d\left(\dfrac{2\delta}{\pi(N+1)}\right)^\frac{d}{2}(1-\gamma_0).\\
\end{align*}
Maximizing over $\delta$, and choosing $\delta=\frac13$, we find that (as $\gamma_0$ was arbitrary), \[\tilde{C}_1\geq \dfrac{2^\frac{3d}{2}}{3^\frac{3d}{2}\pi^\frac{d}{2}}\dfrac{1}{C_0^d},\] which completes the proof.
\end{proof}

It should be noted that the proof above gives a constant $\tilde{C}_1$ which degrades quickly as the dimension increases.  It would be interesting to determine if this can be avoided, i.e. if a dimensionless lower bound could be obtained.

 Additionally, the proof of Theorem \ref{T:V for L1} holds for any dimension, and so we have the following.

\begin{theorem}\label{T:VWOT Multivariate}
If $f\in L_\infty(\TT^d)$, then the Fourier series of $f$ converges unconditionally in the weak operator topology on $L_4(\TT^d)$.  Conversely, if $E$ is a subspace of $L_2(\TT^d)$, and the Fourier series of $f$ converges unconditionally in the weak operator topology on $E$ for every $f\in L_\infty(\TT^d)$, then $E\subseteq L_4(\TT^d)$.  Moreover, both statements hold with $L_\infty(\TT^d)$ replaced by $C(\TT^d)$.
\end{theorem}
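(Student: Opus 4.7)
The plan is to port the entire univariate machinery of Section \ref{SECWOT} to dimension $d$; essentially every step is structurally identical. I will first introduce the multivariate version $\widetilde{\V}^{(d)}$ of $\widetilde{\V}$, namely the set of $g \in L_1(\TT^d)$ such that $\sum_{n \in \Z^d} \widehat{f}(n)\overline{\widehat{g}(n)}$ converges unconditionally for every $f \in L_\infty(\TT^d)$. Then reproving Lemma \ref{L:basic properties of V} in the multivariate setting is purely formal: $g \in \widetilde{\V}^{(d)}$ iff $f * g \in A(\TT^d)$ for all $f \in L_\infty(\TT^d)$; the convolution map $C_g \colon L_\infty(\TT^d) \to A(\TT^d)$ is bounded by a closed graph argument on $C(\TT^d)$ followed by a Fej\'er mean / Fatou extension to $L_\infty(\TT^d)$; and membership in $\widetilde{\V}^{(d)}$ is equivalent to uniform boundedness of the $|\langle S_J(f), g \rangle|$ over finite $J \subseteq \Z^d$.

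Next, I would invoke Theorem \ref{T:GT restated} (Grothendieck's theorem) exactly as in the proof preceding Theorem \ref{T:V for L1}, but with $X = \TT^d$ (compact metrizable) and $Y = \Z^d$ (countable discrete, identifiable with a countably infinite compact space via a Borel bijection as noted in the excerpt). This produces probability measures $\mu \in \Prob(\TT^d),\, \eta \in \Prob(\Z^d)$ and a bounded $U \colon L_2(\TT^d, \mu) \to \ell_2(\Z^d, \eta)$ with $\widehat{f}\widehat{g} = U(f)\eta$ for all $f \in L_\infty(\TT^d)$. Testing against $f = e_n$ for $n \in \Z^d$ yields $|\widehat{g}(n)|^2 \leq \|U\|^2 \eta(n)$, so $\widehat{g} \in \ell_2(\Z^d)$ and $g \in L_2(\TT^d)$ by Parseval. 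Conversely, $L_2(\TT^d) \subseteq \widetilde{\V}^{(d)}$ by Cauchy--Schwarz and the fact that $\widehat{g} \in \ell_2$ forces $\widehat{f}\widehat{g} \in \ell_1$. Hence $\widetilde{\V}^{(d)} = L_2(\TT^d)$.

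With this in hand, the forward implication of Theorem \ref{T:VWOT Multivariate} is an immediate Cauchy--Schwarz estimate: for $g, h \in L_4(\TT^d)$ and $f \in L_\infty(\TT^d)$, write
\[
\left|\left\la \Big(f - \sum_{n \in J}\widehat{f}(n)e_n\Big) g,\, h\right\ra\right|
= \left|\left\la \sum_{n \in \Z^d \setminus J}\widehat{f}(n)e_n,\, \overline{g}h\right\ra\right|
\leq \Big\|\sum_{n \in \Z^d \setminus J}\widehat{f}(n)e_n\Big\|_2 \|g\|_4 \|h\|_4,
\]
and the right-hand side tends to $0$ as $J$ exhausts $\Z^d$. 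For the converse, suppose the Fourier series of every $f \in L_\infty(\TT^d)$ converges unconditionally in the WOT on $E \subseteq L_2(\TT^d)$. For every $h, k \in E$ one has $h \overline{k} \in \widetilde{\V}^{(d)}$ by unpacking the inner product definition, hence $h\overline{k} \in L_2(\TT^d)$ by the preceding paragraph. Specializing to $h = k$ gives $|h|^2 \in L_2(\TT^d)$, i.e. $h \in L_4(\TT^d)$.

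Finally, statement (iii) — the $C(\TT^d)$ versions — follows from the fact that the characterization of $\widetilde{\V}^{(d)}$ is insensitive to whether one tests on $L_\infty(\TT^d)$ or $C(\TT^d)$: the closed graph step is already performed on $C(\TT^d)$, and the Fatou extension promotes any such bound automatically to $L_\infty(\TT^d)$. I do not foresee a genuine obstacle here; the only technical point requiring minor care is the Grothendieck factorization, which demands that both $\TT^d$ and $\Z^d$ fit into the hypotheses of Theorem \ref{T:GT restated}, and this is already dispatched by the metrizable/Borel-bijection reduction explained in the excerpt just before Theorem \ref{T:V for L1}.
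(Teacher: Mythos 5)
Your proposal is correct and follows essentially the same route as the paper, which proves the multivariate case by observing that the argument for Theorem \ref{T:V for L1} (the Grothendieck-factorization reduction of $\widetilde{\V}$ to $L_2$, preceded by the closed-graph/Fej\'er bound for $C_g$ and followed by the $h\overline{k}$ polarization and $L_4$ Cauchy--Schwarz) carries over verbatim to $\TT^d$ and $\Z^d$. Your elaboration of the Borel-bijection reduction for $Y = \Z^d$ in the Grothendieck step is exactly the point the paper flags in the discussion preceding Theorem \ref{T:V for L1}.
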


Next, the following theorem holds by the same proof as in Section \ref{SECW}.   One needs only notice that the proof of Lemma \ref{L:basic facts} is the same line by line in any dimension.

\begin{theorem}\label{THMWd}
For every $d\in\N$, $\W_{S_\infty}^{(d)}=\widetilde{\W}_{S_\infty}^{(d)}=A(\TT^d)$, where $A(\TT^d)$ is the Wiener algebra of functions on $\TT^d$ with absolutely convergent Fourier series.
\end{theorem}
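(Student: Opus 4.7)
The plan is to observe that every lemma used to establish $\W_{S_\infty} = \widetilde{\W}_{S_\infty} = A(\TT)$ in Sections \ref{SECW} and \ref{SECWOT} extends verbatim to arbitrary dimension, once ``rearrangement'' is reinterpreted as a nested exhaustion $\{J_k\}$ of $\Z^d$ by finite sets. The easy inclusion $A(\TT^d) \subseteq \W_{S_\infty}^{(d)} \cap \widetilde{\W}_{S_\infty}^{(d)}$ follows as in one dimension: if $\sum_{n \in \Z^d} |\widehat{f}(n)| < \infty$, then $S_{J_k}[f] \to f$ uniformly along any exhaustion, so $\|M_{S_{J_k}[f]} - M_f\| \to 0$, implying convergence in both SOT and WOT.

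For the reverse inclusion $\W_{S_\infty}^{(d)} \subseteq A(\TT^d)$, I would reprove the $d$-dimensional analogues of three ingredients. First, the analogue of Lemma \ref{L:basic facts}: $f \in \W_{S_\infty}^{(d)}$ if and only if $\sup_J \|S_J[f]\|_\infty < \infty$ over finite $J \subset \Z^d$, and $\W_{S_\infty}^{(d)}$ is closed under Fourier restriction $\widehat{f} \mapsto \mathbbm{1}_K \widehat{f}$. The forward direction applies the principle of uniform boundedness to $\xi \mapsto S_J[f]\xi$, using standard Banach space theory on unconditionally convergent series in $L_2$; the converse uses Theorem \ref{minthm}; closure under restriction holds because $S_J[g] = S_{J \cap K}[f]$ when $\widehat{g}=\mathbbm{1}_K\widehat{f}$. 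Second, the analogue of Lemma \ref{wienerlem}: if $\widehat{f}(n) \geq 0$ for every $n \in \Z^d$, then $f \in A(\TT^d)$. The proof is unchanged: for a finite $J$ and $\xi \in L_2(\TT^d)$, letting $\widehat{\eta} = |\widehat{\xi}|$ gives $|\widehat{S_J[f]\xi}(n)| \leq \widehat{f\eta}(n)$ by non-negativity, hence $\|S_J[f]\|_\infty \leq \|f\|_\infty$ via Plancherel, so $\sum_{n \in J} \widehat{f}(n) = S_J[f](0) \leq \|f\|_\infty$; exhausting $\Z^d$ by $J$ gives absolute summability. Third, the identities $f_R(x) = (f(x) + \overline{f(-x)})/2$ and $f_I(x) = (f(x) - \overline{f(-x)})/(2i)$ reduce the general case to real Fourier coefficients, and splitting over $J_\pm = \{n \in \Z^d : \pm\widehat{f}(n) \geq 0\}$ via the restriction property completes the reduction to the non-negative case, yielding $f \in A(\TT^d)$.

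For $\widetilde{\W}_{S_\infty}^{(d)} = A(\TT^d)$, I would follow the argument opening Section \ref{SECWOT}: given $f \in \widetilde{\W}_{S_\infty}^{(d)}$ and $g \in L_2(\TT^d)$, the WOT hypothesis makes the functionals $h \mapsto \langle S_J[f]g, h\rangle$ uniformly bounded in $J$, so the principle of uniform boundedness yields $\sup_J \|S_J[f]g\|_2 < \infty$. A second application of uniform boundedness, this time over $g \in L_2$, produces $\sup_J \|S_J[f]\|_\infty < \infty$, placing $f$ in $\W_{S_\infty}^{(d)} = A(\TT^d)$. No step presents a genuine obstacle since each rests only on Plancherel, convolution of Fourier coefficients, and uniform boundedness, all of which are dimension-free; the main point to record is simply the bookkeeping identity $S_J[g] = S_{J \cap K}[f]$ that underpins the restriction property in the multidimensional setting.
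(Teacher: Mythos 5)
Your proposal is correct and follows exactly the paper's own approach: the authors' proof of this theorem is essentially the one-line remark that the arguments of Sections~\ref{SECW} and~\ref{SECWOT} (Lemma~\ref{L:basic facts}, Lemma~\ref{wienerlem}, the real/imaginary and positive/negative splitting, and the two applications of uniform boundedness for the WOT case) carry over line by line once ``rearrangement'' is replaced by exhaustion of $\Z^d$ by finite sets, and you have verified each of those steps explicitly. No gap or divergence from the paper's route.
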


\section{Conclusion}\label{SECConclusion}

We have considered Ulyanov's problem concerning uniform convergence of rearranged Fourier series and, inspired by \Rev 's partial results in that direction, we have given some equivalent forms of the problem in terms of SOT or WOT convergence of rearranged Fourier series which are, {\em a priori}, weaker conditions.  From this new operator theoretic perspective, we have given some guidelines for possible ways to find counterexamples to the problem and ideas of how to exhibit a proof thereof.  More specifically, we considered reductions of the problem to convergence on subspaces of $\mathcal{B}(L_2)$ and the relation between subsets of $C(\TT)$, $L_2$, and $S_\infty$, and the behavior of Fourier series related to such triples. This provides many interesting open questions for further research, and also gives a roadmap for future work on the problem.

In summary, we have expressed the following ideas:

%\underline{Ways to disprove Ulyanov's problem:}
%\begin{itemize}
%\item Show that there exists a set $\mathcal{S}\subset L_2$ which fails the bounded rearrangement property (Definition \ref{DEFrearrangementprop}).
%\item Show that $\underset{\sigma\in S_\infty}\bigcap \W_{\{\sigma\}}\neq\varnothing.$
%\item Find a continuous function such that there is no rearrangement for which the rearranged Fourier series converges in the SOT or WOT on $\mathcal{B}(L_2)$.
%\end{itemize}

\underline{Strategies to prove Ulyanov's problem:}

\begin{itemize}
\item Better understand the structure of $\W_{\{\sigma\}}$, or particularly $\W_{\{id\}}$.
\item Show that $\underset{\sigma\in S_\infty}\bigcup\W_{\{\sigma\}}=C(\TT)$.
\item Prove one of the equivalent conditions in Theorem \ref{THMEquivalences}.
\end{itemize}

\underline{Questions of independent interest:}

\begin{itemize}
\item Determine the optimal constants $C_0$ and $C_1$.
\item Find a dimension-independent bound for $\widetilde{C}_1$ (if such a bound exists).
\item Determine the behavior of the {\em Greedy rearrangements} of Fourier series as multiplication operators.
\end{itemize}

In addition to these, we characterized the largest subspace of $L_\infty$ functions for which the Fourier series converge unconditionally in the SOT and WOT, namely the Wiener algebra, $A(\TT^d)$.  Dual to this, we characterized the largest subspace of $L_2$ on which the Fourier series of all continuous functions converges unconditionally in the SOT restricted to that subspace, and this turned out to be $L_\infty(\TT^d)$.  While the WOT analogue of the last result is not altogether obvious, we gave one way of classifying the analogous WOT subspace, and showed that the unique maximal subspace giving unconditional WOT convergence is $L_4(\TT^d)$.

\section*{Acknowledgements}
The authors thank Szil\'{a}rd \Rev, Rudy Rodsphon, Alex Powell, Alexander Olevskii and Artur Sahakian for valuable discussions related to this work. We also thank the anonymous referees for valuable suggestions. %We also thank ICERM and the participants of the Workshop on Frame Theory and Exponential Bases for many good discussions related to this work, and for asking insightful questions, particularly Nir Lev and Darrin Speegle.

Part of this work was done while the first author was an Assistant Professor at Vanderbilt University.  The first author acknowledges partial support in the later stages from the National Science Foundation TRIPODS program, grant CCF--1740858.

Part of this work was done while the second author was an Assistant Professor at Vanderbilt University. The second author acknowledges support from the grants NSF DMS-1600802 and  NSF DMS-1827376.

Part of the work of this paper was done while the third author was a Ph.D. student at Vanderbilt University; the project was motivated by a question asked to him by Vaughan Jones during a class.  The third author acknowledges support during the final stages of the project  by the Oak Ridge National Laboratory, which is operated by UT-Battelle, LLC., for the U.S. Department of Energy under Contract DE-AC05-00OR22725.

%
%\bibliography{newref}{}
%	\bibliographystyle{plain}

\end{document}